\theoremstyle{plain}  
\newtheorem{theorem}{Theorem}[section]
\newtheorem*{theorem*}{Theorem}
\newtheorem{corollary}[theorem]{Corollary}
\newtheorem{lemma}[theorem]{Lemma}
\newtheorem{proposition}[theorem]{Proposition}
\theoremstyle{definition}
\newtheorem{definition}[theorem]{Definition}
\newtheorem{example}[theorem]{Example}
\newtheorem{remark}[theorem]{Remark}
\newtheorem*{claim*}{Claim}
\numberwithin{equation}{section}
\newcommand{\R}{\mathbb{R}}
\newcommand{\N}{\mathbb{N}}
\newcommand{\g}{\mathfrak{g}}
\newcommand{\z}{\mathfrak{z}}
\newcommand{\p}{\mathfrak{p}}
\newcommand{\mk}{\mathfrak{k}}
\newcommand{\n}{\mathfrak{n}}
\newcommand{\bdot}{\boldsymbol{\cdot}}
\newcommand{\e}{\mathrm{e}}
\newcommand{\Id}{\mathrm{I}}
\newcommand{\ad}{\mathrm{ad}}
\newcommand{\Ad}{\mathrm{Ad}}
\newcommand{\GL}{\mathrm{GL}}
\newcommand{\Sym}{\mathrm{Sym}}
\newcommand{\Aut}{\mathrm{Aut}}
\definecolor{MyBlue}{RGB}{0,0,255}
\definecolor{MyRed}{RGB}{255,0,0}
\definecolor{MyGray}{RGB}{150,60,60}
\newcommand{\be}{\begin{equation}}
\newcommand{\ee}{\end{equation}}
\newcommand{\ben}{\begin{enumerate}}
\newcommand{\een}{\end{enumerate}}
\newcommand{\bit}{\begin{itemize}}
\newcommand{\eit}{\end{itemize}}
\newcommand{\edoc}{\end{document}}
\def\br#1\er{{#1}} 
\def\bw#1\ew{\textcolor{brown}{#1}} 
\def\bb#1\eb{\textcolor{blue}{#1}} 
\def\br#1\er{\textcolor{red}{#1}} 
\def\bm#1\em{\textcolor{magenta}{#1}}
\def\bv#1\ev{\textcolor{olive}{#1}}
\renewcommand{\tocsection}[3]{%
	\indentlabel{\@ifnotempty{#2}{\ignorespaces#1 #2\quad}}#3}
\renewcommand{\tocsubsection}[3]{%
	\indentlabel{\@ifnotempty{#2}{\ignorespaces#1 #2\quad}}#3}
\newcommand\@dotsep{4.5}
\def\@tocline#1#2#3#4#5#6#7{\relax
	\ifnum #1>\c@tocdepth 
	\else
	\par \addpenalty\@secpenalty\addvspace{#2}%
	\begingroup \hyphenpenalty\@M
	\@ifempty{#4}{%
		\@tempdima\csname r@tocindent\number#1\endcsname\relax
	}{%
		\@tempdima#4\relax
	}%
	\parindent\z@ \leftskip#3\relax \advance\leftskip\@tempdima\relax
	\rightskip\@pnumwidth plus1em \parfillskip-\@pnumwidth
	#5\leavevmode\hskip-\@tempdima{#6}\nobreak
	\leaders\hbox{$\m@th\mkern \@dotsep mu\hbox{.}\mkern \@dotsep mu$}\hfill
	\nobreak
	\hbox to\@pnumwidth{\@tocpagenum{\ifnum#1=1\fi#7}}\par
	\nobreak
	\endgroup
	\fi}
\renewcommand\csname r@tocindent0\endcsname{0pt}
\def\l@subsection{\@tocline{2}{0pt}{2.5pc}{5pc}{}}
\begin{document}
\title[Lie groups with all metrics complete]{Lie groups with all left-invariant  semi-Riemannian metrics complete }

\author[A. Elshafei]{Ahmed Elshafei}
\address{\hspace{-5mm} Ahmed Elshafei, Centro de Matem\'{a}tica,
Universidade do Minho,
Campus de Gualtar,
4710-057 Braga,
Portugal} 
\email {a.el-shafei@hotmail.fr}

\author[A.C. Ferreira]{Ana Cristina Ferreira}
\address{\hspace{-5mm} Ana Cristina Ferreira, Centro de Matem\'{a}tica,
Universidade do Minho,
Campus de Gualtar,
4710-057 Braga,
Portugal} 
\email {anaferreira@math.uminho.pt}

\author[M. S\'anchez]{Miguel S\'anchez}
\address{\hspace{-5mm} Miguel S\'anchez, Departamento de Geometr\'ia y Topolog\'ia, Facultad de Ciencias, Universidad de Granada, Campus Fuentenueva s/n, 18071 Granada, Spain}
\email{sanchezm@ugr.es}

\author[A. Zeghib]{Abdelghani Zeghib}
\address{\hspace{-5mm} Abdelghani Zeghib, UMPA, CNRS, \'Ecole Normale Sup\'erieure de Lyon, 46, All\'ee d'Italie 69364 Lyon Cedex 07, France }
\email{abdelghani.zeghib@ens-lyon.fr}
\subjclass[2020]{Primary 53C22; Secondary 53C30, 53C50}

\date{\today}

\begin{abstract}
For  each  left-invariant  semi-Riemannian metric $g$ on a Lie group $G$, we introduce  the class of bi-Lipschitz Riemannian   {\em Clairaut}  metrics, whose completeness implies the  completeness of $g$. When the  adjoint representation of $G$ satisfies an at most linear  growth bound,  then all the Clairaut metrics are complete for any $g$. 
We prove that this bound is satisfied by   compact and 2-step nilpotent groups, as well as by semidirect products $K \ltimes_\rho \R^n$ , where $K$ is the direct product of a compact and an abelian Lie group and $\rho(K)$ is pre-compact; they include  all the known examples of  Lie groups with all left-invariant metrics complete.   
 The  affine group of the real line  is considered to illustrate   how our techniques work even in  
the absence of linear  growth  and suggest new questions. 
 
\end{abstract}

\vspace*{-3mm}

\maketitle

\tableofcontents

\section{Introduction}
 
 A striking difference between Riemannian and indefinite semi-Riemannian left-invariant metrics on a Lie group $G$ is the possibility of (geodesic) incompleteness for the latter.  Abundant examples are known  \cite{Guediri, BM}; indeed, even the affine group of the real line exhibits this property (see Section \ref{s6} below).  Here, we study conditions which ensure the completeness of every  left-invariant metric $g$ on $G$. 
 
 With this aim, two notions which might have interest in their own right are introduced, namely, the {\em Clairaut metrics} associated with $g$ and the possible (at most)  {\em linear growth} applicable to $G$. The former is inspired by the techniques of a  celebrated theorem by Marsden [M73], which ensures the completeness of compact semi-Riemannian homogeneous manifolds, while  the latter comes from natural estimates of growth on the geodesic vector field implying completeness (see the review [S15, \S 3] or Section \ref{s3} below). Our goals can be neatly summarized as follows: 
\begin{enumerate}
\item\label{item1} Any left-invariant semi-Riemannian metric $g$ on the Lie group $G$ determines the class of its Clairaut metrics. These metrics are Riemannian and  bi-Lipschitz bounded, thus, determining  a single uniformity (Proposition \ref{prop: Clairaut uniform}). 
The completeness of this uniformity implies the completeness of $g$ (Theorem \ref{c02}).

\item \label{item2} For any Lie group $G$, the behaviour of the adjoint representation $\Ad: G\rightarrow \mathrm{GL}(\mathfrak{g})$  on the connected part of the identity defines  a natural notion of  (at most)  linear growth (Definition \ref{def:glg}). 
When $G$ is of linear growth,  the Clairaut metrics   
of every left-invariant metric $g$ on $G$ are complete (Theorem \ref{t_lg}) and, thus,  $g$ is complete from item \eqref{item1}. 

\item  The list of groups of at most linear growth includes compact ones,  2-step  nilpotent groups and  the semidirect products $K\ltimes_\rho \mathbb{R}^n$, where $K$ is the direct product of a compact and an abelian group, and  $\rho(K)$ is pre-compact in $\mathrm{GL}(n, \R)$. All the left-invariant semi-Riemannian metrics on these groups are complete from item \eqref{item2} and the above list includes  the known cases (contained in \cite{Marsden, Guediri, BM}).

\item A simple example as the affine group of the line exhibits the subtleties appearing when the linear growth assumption is dropped.

\end{enumerate}

Next let us explain briefly the ideas in these items.

 \subsection{Clairaut metrics} Marsden's proof 
 is based upon the use of Clairaut first integrals. 
 For $Y$ a Killing vector field on a semi-Riemannian manifold $(M, g)$, the 1-form $\omega_Y $, dual to $Y$, defines
 a function on $TM$, which is invariant under the geodesic flow of $g$. Under Marsden's hypotheses, this is enough to show that any tangent vector lies in a compact subset of the tangent bundle  $TM$ which is invariant under the geodesic flow; this implies the  completeness of any geodesic $\gamma$.  As pointed out in \cite{RS94}, completeness is also ensured if, when $\gamma$ is restricted to a finite interval, its velocity $\dot\gamma$ lies in a compact subset of $TM$. This permits to extend Marsden's theorem in several directions. 
 In particular,  the compactness of $M$ is dropped in some cases when  a complete Riemannian metric $h$ can be constructed from the semi-Riemannian one. In 
 \cite[Prop. 2.1]{RS94} such an $h$ is constructed by using a suitable set of conformal-Killing vector fields. 
  However, as it will be shown in Section \ref{s2}, the case of a left-invariant metric $g$ on $G$
  is especially well suited for going beyond these ideas.
 
Specifically, any basis $(e_i)$ of the Lie algebra $\mathfrak{g}$ yields a frame $(Y_i)$ of (right-invariant) Killing vector fields and their dual forms $(\omega^i)$ generate the Clairaut metric $h=\sum_i (\omega^i)^2$, as seen in Section~\ref{s2.2}. 
A change of basis would generate a Clairaut  metric $\tilde{h}$ bi-Lipschitz related with $h$ and, so, the same  Clairaut uniformity on $G$. In Section~\ref{s2.3}, we show that \begin{quote}
 {\em   completeness of the Clairaut uniformity implies completeness of $g$} (Theorem \ref{c02}); 
\end{quote}
 moreover, these ideas  also provide a variant to the proof of Marsden's theorem (Remark~\ref{r_Marsden}). 
  The following observations are in order. 

First (Section~\ref{s2.1}), if the left-invariant metric $g$ is chosen Riemannian (thus, trivially complete), then the uniformity associated with its distance agrees with the natural uniformity associated with the Lie group structure of $G$. However, in principle,   this uniformity is not equal to the Clairaut uniformity associated with $g$ (this suggests a further  study,  see the prospects subsection below). 

Second (Section \ref{s2.3}), the freedom in the choice of the Lie algebra basis $(e_i)$ for the construction of a Clairaut metric $h$, permits to use a convenient ``Wick rotated'' left-invariant Riemannian metric $\tilde g$ so that $(e_i)$ is orthonormal for $g, \tilde g$ and $h$ on $\mathfrak{g} (\equiv T_1G)$, which will yield technical simplifications (Proposition \ref{prop: Wick}). 

 Finally (Section~\ref{s2.4}),  
  notice that the Clairaut metrics $h$ are neither left nor right-invariant, in general.
 To better understand their  transformation law and the role played by the adjoint representation,  an abstract viewpoint is introduced;  subsequent consequences (Proposition \ref{p_aut_orbits}) will be used later.

  \subsection{Linear growth and completeness} In order to prove the completeness of the Clairaut metric $h$, a Wick rotated left-invariant Riemannian metric $\tilde g$ can be used. Then, the issue becomes a  purely Riemannian one, studied in Section \ref{s3}. Namely,  estimate the  possible  completeness of a Riemannian metric $h$ in terms of a  prescribed  complete one $g_R=\tilde g$.  As a consequence of a general result on so-called primary bounds, {\em  $h$ is complete if it is lowerly bounded by $\tilde g$ divided by an affine function of the distance to a point} 
   (Proposition \ref{p_03}).
  
    In Section \ref{s4}, this bound is implemented 
  for the Clairaut metric $h$ in terms of the growth of the adjoint map $\Ad_p$ with $p\in G$. Indeed, there is a natural notion of  (at most) linear growth for the operator norm  $\Vert \Ad_p\Vert$ (Definition \ref{def:glg}). Because of  the group structure, this type of growth implies a lower and an upper bound (Proposition \ref{p_linearg}). The linear growth can be suitably expressed in terms of eigenvalues of the self-adjoint operator $\Ad ^*\circ \Ad$ (see  \eqref{e:lambda+}), which is the key for a crucial technical characterization: {\em  the linear growth of $\Ad$ is equivalent to the required lower bound for the completeness of $h$} (Lemma \ref{l_essential}). 
    
    So, as the linear growth of $G$ is defined exclusively in terms of its Lie group structure, \begin{quote}
 {\em when $G$ is of linear growth, all its Clairaut metrics $h$ and, thus, all its left-invariant semi-Riemannian metrics $g$, are complete} (Theorem \ref{t_lg}).  
  \end{quote} 
  It is worth pointing out that the bounds associated with the linear growth preserve completeness but not the uniformities (Example \ref{Ex_Unpreserved uniformities}).
  What is more, when $G$ is not of linear growth then the growth of $\Ad$ is equal for all the Wick rotated left-invariant metrics, no matter their signature, in spite of the fact that  completeness of both the left-invariant $g$ and Clairaut $h$ do depend on signature (Remark \ref{r_BEWARE}, see  the prospects below).

\subsection{Extended list of groups with all left-invariant metrics complete}
A case by case study of the linear growth of groups yields our main result (Section \ref{s5}), namely:

\begin{theorem} \label{main}
A Lie group $G$ is  of linear growth if it lies  in one of the following classes
\begin{itemize}
\item[-] compact
\item[-] 2-step nilpotent (including  abelian) 
\item[-] the semidirect product $K\ltimes_\rho \mathbb{R}^n$ where $K$ is the direct product of a compact and an abelian group and such that $\rho(K)$ is pre-compact in $\mathrm{GL}(n, \R)$
\item[-] a subgroup or the direct product of the groups above,
 \end{itemize}
and, then,  every left-invariant semi-Riemannian metric on $G$ is complete.
\end{theorem}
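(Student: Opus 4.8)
The completeness conclusion will be immediate: once $G$ is shown to be of linear growth (Definition~\ref{def:glg}), Theorem~\ref{t_lg} gives that all its Clairaut metrics are complete, and then Theorem~\ref{c02} yields that every left-invariant semi-Riemannian metric $g$ on $G$ is complete. So the entire content of the statement is the verification of linear growth, i.e. of an at most affine bound $\Vert\Ad_p\Vert \le a\, d(1,p) + b$ for $d$ the distance of a fixed complete left-invariant Riemannian metric. By Proposition~\ref{p_linearg} this bound is robust under the choice of metric, so I am free to pick a convenient one in each case. The plan is to treat three base cases and then two hereditary (closure) lemmas.

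For the \emph{compact} case, $\Ad(G)$ is a continuous image of a compact group, hence a compact subgroup of $\GL(\g)$ (equivalently $\Ad_p$ is orthogonal for the inner product of a bi-invariant metric), so $\Vert\Ad_p\Vert$ is bounded and linear growth holds with $a=0$. For the \emph{$2$-step nilpotent} case I would use $[\g,\g]\subseteq\z$, which forces $\ad_X\ad_Y=0$ and hence $\Ad_{\exp X} = \Id + \ad_X$, an affine function of $X$. Since $X\mapsto\ad_X$ annihilates $\z$, it factors through the abelian quotient $\g/\z$; I would then check that for $2$-step nilpotent $G$ the central quotient $G/Z$ has no torus factor (a non-central $1$-parameter subgroup cannot be periodic, as $e^{t\ad_Y}=\Id$ forces the nilpotent operator $\ad_Y$ to vanish), so $G/Z\cong\R^m$ and the projection $\pi\colon G\to G/Z$ gives $\Vert\ad_X\Vert\le C\Vert X \bmod\z\Vert = C\, d_{G/Z}(1,\pi(\exp X))\le C\, d(1,\exp X)$, which is linear.

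For the \emph{semidirect products} $K\ltimes_\rho\R^n$ with $K=C\times A$, writing elements as $(k,v)$ and differentiating the conjugation law I expect to obtain
\[
\Ad_{(k,v)}(B,w) = \big(\Ad^K_k B,\ \rho(k)w - d\rho(\Ad^K_k B)\,v\big),
\]
where $d\rho\colon\mk\to\mathfrak{gl}(n)$ is the differential of $\rho$. Here $\Ad^K$ is bounded (orthogonal on $\mathfrak{c}$, trivial on $\mathfrak{a}$) and $\rho(K)$ is bounded by pre-compactness, so the only unbounded contribution is the term $d\rho(\cdot)\,v$, which is linear in $v$; thus $\Vert\Ad_{(k,v)}\Vert\le C(1+\Vert v\Vert)$. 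To control $\Vert v\Vert$ by distance I would average an inner product over $\overline{\rho(K)}$ so that $\rho(K)\subseteq O(n)$, and exploit the isometric action $(k,v)\cdot x=\rho(k)x+v$ on $\R^n$: left-translating a curve $\gamma=(k(t),v(t))$ to the identity, its $\R^n$-component has norm $\Vert\dot v\Vert$, whence $\mathrm{length}(\gamma)\ge c\int\Vert\dot v\Vert\,dt\ge c\Vert v\Vert$ and $\Vert v\Vert\le C\, d(1,(k,v))$. Combining the two estimates makes $\Vert\Ad_{(k,v)}\Vert$ at most affine in the distance.

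Finally, for the closure lemmas: for a direct product I would use a product metric, so that $\Ad^{G_1\times G_2}$ is block-diagonal and $d_i\le d$, whence an affine bound on each factor gives one on the product; for a connected subgroup $H\le G$ with the restricted metric, $\Ad^H_h=\Ad^G_h|_{\mathfrak{h}}$ gives $\Vert\Ad^H_h\Vert\le\Vert\Ad^G_h\Vert$ while $d_G(1,h)\le d_H(1,h)$, so linear growth descends to $H$; iterating these two lemmas covers every group built from the three base types. I expect the genuine difficulty to be precisely the lower bounds in the nilpotent and semidirect cases, namely bounding the \emph{growing} part of $\Ad_p$ by $d(1,p)$. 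The trap is that in a left-invariant Riemannian metric the distance can grow strictly sublinearly along certain directions (the Heisenberg ``vertical spiral''); what rescues the argument is that $\Ad$ grows only in the complementary directions (the abelianization for nilpotent groups, the $\R^n$-translation for the semidirect products), exactly where the distance does grow linearly. Turning this heuristic into the clean estimates above, together with the no-torus claim for $G/Z$, is where I would concentrate the effort.
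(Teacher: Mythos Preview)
Your proposal is correct and follows the same case-by-case architecture as the paper: bounded growth for compact $G$; the identity $\Ad_{\exp X}=\Id+\ad_X$ with $\ad_X$ factoring through $\g/\z\cong\R^m$ in the $2$-step nilpotent case; the explicit formula for $\Ad_{(k,v)}$ in the semidirect case; and the same restriction and product arguments for the closure lemmas. Your no-torus argument for $G/Z$ (via $e^{T\ad_Y}=\Id\Rightarrow\ad_Y=0$ for nilpotent $\ad_Y$) is correct and equivalent to the paper's, which instead passes to the universal cover and uses $\pi_1(G)\subseteq Z(\tilde G)$ to identify $G/Z(G)$ with $\tilde G/Z(\tilde G)$.

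The one place your route genuinely differs is the estimate $\Vert v\Vert\le C\,d(1,(k,v))$ in the semidirect product case. You average so that $\rho(K)\subseteq O(n)$ and observe that the $\R^n$-component of the left-translated velocity of $\gamma=(k(t),v(t))$ is $\rho(k)^{-1}\dot v$, of Euclidean norm $\Vert\dot v\Vert$; integrating gives the bound directly. The paper instead proves a structural lemma (Lemma~\ref{lem:isometry}): for an $\Ad(K)$-invariant inner product on $\g$, the identity map $K\times H\to K\ltimes_\rho H$ is an isometry of left-invariant Riemannian metrics, so distances on the semidirect product agree with those on the direct product, where the bound is trivial. Your argument is shorter and tailored to the estimate needed; the paper's buys a reusable isometry explaining \emph{why} the two products share the same left-invariant Riemannian geometry, at the cost of a longer detour.
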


 This contains as particular cases all known cases of groups having all their left-invariant semi-Riemannian metrics complete, as far as we know.
They are due to Marsden \cite{Marsden} for the compact case, Guediri \cite{Guediri} for  2-step nilpotent and Bromberg-Medina \cite{BM} for the Euclid  group (isometry group of the Euclidean plane). 

 The cases of Theorem \ref{main} give  alternative proofs of the first two items and a proper extension of the third one. The last item is not trivial, since an embedded submanifold of a complete Lorentzian manifold which is closed as a subset may be incomplete (an example can be constructed by using a spacelike curve in Lorentz-Minkowski which converges to a lightlike line so fast that its length is finite). It is not clear whether, in the case of left-invariant metrics on Lie groups, a general result on completeness for subgroups is possible.
  
 For the sake of completeness of exposition, we also prove that $k$-step nilpotent groups are not of linear growth for $k\geq 3$ (thus arriving at Cor. \ref{cor:k-step-nilpotent}) and that the groups admitting an idempotent have exponential growth along a curve (cf. Prop. \ref{prop:idemp-exp-growth}). This is consistent with the existence of incomplete left-invariant metrics on these groups, which has been known since \cite{Guediri} and \cite{BM}, respectively.
   
\subsection{The affine group of the line and prospects}   As an application of our techniques, we present a detailed study of the group $\mathrm{Aff}^+(\R)$ of orientation preserving affine transformations of the real line $\R$ in Section \ref{s6}. This group is bidimensional and it is not of linear growth. There are two  orbits by the action of Aut$(\mathfrak{g})$ of  Lorentzian left-invariant metrics, both incomplete, so:

\begin{corollary}\label{Cor:affine-group}
A left-invariant metric $g$ on $\mathrm{Aff}^+(\R)$ is complete if and only if $g$ is (positive or negative) definite.
\end{corollary}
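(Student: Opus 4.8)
The plan is to establish the two implications separately; the forward one is routine, while the converse carries the substance of Section~\ref{s6}. For the ``if'' part, observe that a definite left-invariant metric $g$ is, up to the overall sign (which does not change the Levi--Civita connection and hence leaves the geodesics and their maximal domains untouched), a left-invariant Riemannian metric. Any such metric makes $(G,g)$ a Riemannian homogeneous space, as $G$ acts transitively on itself by isometric left translations, and Riemannian homogeneous spaces are complete; thus every definite $g$ is complete. It remains to prove that every \emph{indefinite} left-invariant metric is incomplete, and since $\dim \mathrm{Aff}^+(\R)=2$ indefinite here means Lorentzian of signature $(1,1)$.

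I would begin by fixing the global coordinates $(a,b)$ with $a>0$, $b\in\R$, together with the left-invariant frame $E_1=a\,\partial_a$, $E_2=a\,\partial_b$, which satisfies $[E_1,E_2]=E_2$; here $E_2$ spans the derived algebra $[\mathfrak{g},\mathfrak{g}]$. Writing a left-invariant metric in this frame with constant Gram matrix yields the conformally flat expression $g=a^{-2}g_0$, where $g_0=A\,da^2+2B\,da\,db+C\,db^2$ has constant coefficients and $g$ is Lorentzian exactly when $AC-B^2<0$. A short computation shows that $\Aut(\mathfrak{g})$ consists of the maps $e_1\mapsto e_1+\gamma e_2$, $e_2\mapsto\delta e_2$ with $\delta\neq 0$, so the derived direction $E_2$ is preserved and its causal character, the sign of $C$, is an invariant. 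Combining the induced action on metrics (Proposition~\ref{p_aut_orbits}) with homotheties $g\mapsto cg$, which also preserve geodesics, one checks that the Lorentzian metrics fall into exactly two orbits, represented by $g_{\mathrm{nn}}=a^{-2}(da^2-db^2)$ (the case $E_2$ non-null, $C\neq 0$) and $g_{\mathrm{nl}}=2a^{-2}\,da\,db$ (the case $E_2$ null, $C=0$). Since automorphisms integrate to isometries between the corresponding metric Lie groups and completeness is homothety-invariant, proving incompleteness for these two models settles the corollary.

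The incompleteness itself I would extract from the conformal structure, treating both models at once. Because $g=a^{-2}g_0$ with $g_0$ flat, the null geodesics of $g$ are, as unparametrised curves, the $g_0$-null straight lines $s\mapsto(a_0+\alpha s,\,b_0+\beta s)$ with $(\alpha,\beta)$ a $g_0$-null direction; as $g_0$ is Lorentzian there is always such a direction with $\alpha\neq 0$. Along it the $g$-affine parameter $t$ satisfies $dt/ds=\mathrm{const}\cdot a^{-2}=\mathrm{const}\cdot(a_0+\alpha s)^{-2}$, so for $\alpha>0$ the integral $t(s)=\int_0^s (a_0+\alpha s')^{-2}\,ds'$ converges as $s\to+\infty$. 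Hence this null geodesic reaches $a=+\infty$ in \emph{finite} affine parameter, leaving every compact subset of $\mathrm{Aff}^+(\R)$, and is therefore inextendible: $g$ is incomplete. Equivalently, integrating the Euler--Lagrange equations on either model gives $a(t)=-1/(\mu t+c)$ along null geodesics, which blows up at a finite value of $t$, displaying the defect explicitly.

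The delicate point, and the reason this example is included, is that $\mathrm{Aff}^+(\R)$ is \emph{not} of linear growth, so Theorem~\ref{t_lg} and the Clairaut-metric machinery yield no completeness in the indefinite case, and indeed completeness fails. The crux is thus the incompleteness direction, whose engine is the elementary but decisive fact that the conformal factor satisfies $\int^{\infty}a^{-2}\,da<\infty$, forcing null geodesics to escape to infinity in finite affine time; the one thing that must be checked with care is that these curves genuinely leave every compact set, rather than running into a removable coordinate singularity, which is what rigorously certifies inextendibility and hence incompleteness.
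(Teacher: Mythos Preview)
Your argument is correct. The reduction step---computing $\Aut(\mathfrak{g})$, showing that up to automorphism and positive scaling the Lorentzian left-invariant metrics form exactly two classes according to whether $e_2$ is null, and invoking Proposition~\ref{p_aut_orbits} to propagate (in)completeness across each orbit---is precisely what the paper does in \S\ref{s6}.

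The difference lies only in how you exhibit incompleteness of the two representatives. The paper writes down explicit incomplete geodesics, $\gamma(t)=\bigl(\tfrac{1}{1-t},\tfrac{1}{1-t}\bigr)$ for $g^{(-1)}$ and $\gamma(t)=\bigl(\tfrac{1}{1-t},0\bigr)$ for $g^{(0)}$, noting that they arise from idempotents of the Euler--Arnold field; you instead use the conformal relation $g=a^{-2}g_0$ with $g_0$ flat, the conformal invariance of unparametrised null geodesics, and the affine-parameter relation $dt/ds=\mathrm{const}\cdot a^{-2}$ together with $\int^{\infty}a^{-2}\,da<\infty$. The two routes meet: the paper's explicit curves are null and are exactly the null lines your conformal argument produces. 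Your approach has the advantage of treating both models uniformly and isolating the mechanism (integrability of the conformal factor along an escaping null line); the paper's has the advantage of being self-contained, avoiding the background fact about conformal reparametrisation of null geodesics, and connecting directly to the idempotent criterion used elsewhere in the article (Proposition~\ref{prop:idemp-exp-growth}).
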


 These results are consistent with the case of left-invariant metrics on the hyperbolic group, studied in \cite{Vuk-Suk}. Here, the aim of our study is twofold. First,  the explicit computations  illustrate the subtleties of our approach. 
 In particular, the  computation of the Clairaut metrics shows an announced behaviour: for two Wick rotated left-invariant metrics $g^{(\epsilon )}$, one of them Riemannian ($\epsilon=1$) and the other Lorentzian ($\epsilon=-1$), the Clairaut metrics 
$h^{(\epsilon)}, $   have the same growth, even if $h^{(1)}$ is complete and $h^{(-1)}$ is incomplete (and accordingly so are $g^{(1)}$ and $g^{(-1)}$). A heuristic reasoning and detailed proofs are provided in Sections \ref{s6.4} and \ref{s6.5}, respectively.    Other properties, shown explicitly, include the exponential growth associated with idempotents or the absence of bi-Lipschitz bounds for Clairaut metrics associated with different classes of incomplete Lorentzian metrics.  Second, we expect that such techniques might be applicable further.    
 What is more, this behaviour suggests open technical  questions, as:  (i) in which cases the completeness of the left-invariant metric $g$ implies the completeness of its Clairaut uniformity and (ii) to what extent the Clairaut uniformities of different  left-invariant metrics which are complete must be equal. 
   
From a more general viewpoint, the following questions are left open for a further study:   (a)~to what extent the list of groups of  linear growth is complete, (b)  whether a Lie group may or not admit a non-linear primary bound (see Remark \ref{r_log}) and (c) find  Lie groups with all its left-invariant metrics complete but not satisfying a primary bound.

\section{Clairaut metrics}\label{s2}

 We proceed with a detailed construction of the Clairaut metrics, remark how they allow us to transform a problem of semi-Riemannian geometry into a problem of Riemannian geometry and, furthermore, provide some abstract formalism.

 \subsection{The left-invariant metrics and  uniformity of $G$}\label{s2.1}
As any topological group, see \cite[Chap. II and III.3]{Bourbaki-a} and \cite[Chap. IX.1]{Bourbaki-b}, $G$ admits a natural left uniform structure whose  basis of entourages is 
$$
\{V_U: U \; \hbox{is a neighbourhood of 1}\} \qquad \hbox{where} \qquad
V_U:=\{(p,q)\in G\times G: \, q^{-1}.p\in U\}.
$$
Recall that any Lie group has a natural class of Riemannian metrics, the   left-invariant ones, which are complete.  In this respect, we can easily prove the following. 

\begin{lemma}\label{lem: left uniform}
Any two left-invariant Riemannian metrics $g_R,\,  {g'_R}$ on $G$ are bi-Lipschitz bounded, that is, there is a constant $c>0$ such that 
$$c\, g_R \leq {g'_R} \leq g_R/c.$$
Thus,  they induce equivalent distances  and the same uniformity on $G$. This uniformity is complete and equal to the left uniformity of $G$.
\end{lemma}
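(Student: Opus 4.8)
The plan is to reduce the whole statement to linear algebra at the identity and then propagate everything by left-invariance.

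\textbf{Bi-Lipschitz bound and the induced uniformity.} At the identity, $g_R|_1$ and ${g'_R}|_1$ are two inner products on the finite-dimensional vector space $\g = T_1G$. The $g_R|_1$-unit sphere is compact and $v \mapsto {g'_R}|_1(v,v)$ is continuous and strictly positive on it, so it attains a positive minimum and maximum; homogeneity of quadratic forms then produces $c \in (0,1]$ with $c\, g_R|_1 \le {g'_R}|_1 \le g_R|_1/c$ on all of $\g$. Since both metrics are left-invariant, for $p \in G$ and $v \in T_pG$ one has $g_R|_p(v,v) = g_R|_1((\mathrm{d}L_{p^{-1}})v,(\mathrm{d}L_{p^{-1}})v)$ and likewise for ${g'_R}$, so the inequality at $1$ transports verbatim to every tangent space, giving $c\, g_R \le {g'_R} \le g_R/c$ on $G$. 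Integrating along a piecewise-smooth curve $\gamma$ yields $\sqrt{c}\, L_{g_R}(\gamma) \le L_{g'_R}(\gamma) \le L_{g_R}(\gamma)/\sqrt{c}$, and taking the infimum over curves joining two fixed points gives $\sqrt{c}\, d \le d' \le d/\sqrt{c}$ for the associated distances $d, d'$. Bi-Lipschitz equivalent distances have balls nested up to a fixed rescaling of the radius, so each basic entourage of one distance contains one of the other; hence $d$ and $d'$ define the same metric uniformity.

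\textbf{Completeness.} Here I invoke the recalled fact that a left-invariant Riemannian metric is geodesically complete (left translations act transitively by isometries, so $(G,g_R)$ is homogeneous, hence complete); by Hopf--Rinow the metric space $(G,d)$ is then complete as a metric space, so its uniformity is complete.

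\textbf{Identification with the left uniformity.} Because left translations are $g_R$-isometries, $d$ is left-invariant, $d(rp,rq)=d(p,q)$, whence $d(p,q)=d(1,p^{-1}q)$. Writing $B_\epsilon$ for the open $d$-ball of radius $\epsilon$ about $1$, the basic metric entourages become $W_\epsilon = \{(p,q): d(p,q)<\epsilon\} = \{(p,q): p^{-1}q \in B_\epsilon\}$, and using $p^{-1}q=(q^{-1}p)^{-1}$ this is exactly $V_{B_\epsilon^{-1}}$ in the notation of the left uniformity; so every metric entourage is a left entourage. Conversely, given a neighbourhood $U$ of $1$, since inversion is a homeomorphism fixing $1$ and the $d$-topology coincides with the group topology, there is $\epsilon$ with $B_\epsilon \subseteq U^{-1}$, hence $B_\epsilon^{-1}\subseteq U$ and, by monotonicity of $U \mapsto V_U$, $W_\epsilon = V_{B_\epsilon^{-1}} \subseteq V_U$. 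Thus $\{W_\epsilon\}$ and $\{V_U\}$ mutually refine, the metric uniformity equals the left uniformity, and in particular the latter is complete.

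The first two steps are routine; I expect the only delicate point to be the last one, where the inverse and the left/right bookkeeping must be handled carefully, and where one must use that the $d$-balls about $1$ form a neighbourhood basis for the group topology.
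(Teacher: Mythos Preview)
Your proof is correct and follows essentially the same approach as the paper: reduce to two inner products at the identity, propagate by left-invariance, invoke completeness of left-invariant Riemannian metrics, and match the metric entourages with the $V_U$'s by taking $U$ to be $d$-balls about $1$. The paper dispatches the last step in one line (``take the neighbourhoods $U$ as balls centered at $1$ for $g_R$''), whereas you spell out the $q^{-1}p$ versus $p^{-1}q$ bookkeeping carefully; note that since $d(1,p)=d(1,p^{-1})$ (left-invariance gives $d(1,p^{-1})=d(p,1)$), the balls $B_\epsilon$ are symmetric, so the passage through $B_\epsilon^{-1}$ could be omitted.
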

\begin{proof}
At 1, $g_R$ and ${g'_R}$ are scalar products, which are always bi-Lipschitz bounded (say, by suitably bounding the maximum and minimum eigenvalues of a naturally associated symmetric matrix), and such a bound is extended to $G$ by left-invariance. The completeness of the induced uniformity comes from the completeness of $g_R$   and its agreement with the left uniformity of $G$ follows by taking the neighbourhoods  $U$ above  as balls centered at 1 for $g_R$.
\end{proof}

\begin{remark}
If $g_R$ and $g'_R$ are bi-Lipschitz bounded then the identity map $(G, g_R)\longrightarrow (G,g'_R)$ is a quasi-isometry and, as consequence, a pseudo-isometry. Furthermore, in particular, $(G,g_R)$ and $(G,g'_R)$ are rough isometric in the sense of Kanai \cite{Kanai}.
\end{remark}

\subsection{The construction}
\label{s2.2} Let $G$ be a Lie group, with unit element 1,  endowed with a left-invariant semi-Riemannian metric $g$
 and let $(e_i)$ be  any vector basis  of the Lie algebra $\g  = T_1 G$. Take $X_i$ as the extensions of $e_i$ as left-invariant vector fields, and $Y_i$ as the extensions of $e_i$ as right-invariant vector fields (so $X_i(1)$ = $Y_i(1)$ = $e_i$). 
Since $g$ is left-invariant, all $g (X_i, X_j)$ are constant on $G$ and, moreover, the $Y_i$ are Killing fields.  We will adopt an obvious simplified notation for left and right translations $L_p$, $R_p$ and their tangent maps, so that  $X_i(p)= p.e_i, Y_i(p)= e_i .p$. 

Let $\omega^i$ be the  dual form of $Y_i$ with respect to $g$, i.e. $\omega^i = g( Y_i, \cdot )$. More precisely $\omega^i(u) = g( Y_i(p), u)$, for $u\in T_p G.$ We remark that the $(\omega^i)$ span the cotangent bundle $T^\ast G$ and that they are invariant under the geodesic flow of $g$, that is, if $\gamma(t)$ is a geodesic, then $\omega^i(\dot{\gamma}(t)) = \omega^i(\dot{\gamma}(0))$, cf. for example \cite[Lem. 9.26]{ONeill}.

\begin{definition}\label{d_Clairaut}
The 2-symmetric tensor $h = \sum \omega^i \otimes \omega^i$ 
is a Riemannian metric  on $G$ which 
 will be called  the {\em Clairaut metric}  associated to $g$  from the basis $(e_i)$. 
\end{definition}

We observe that $h$ is also a first integral of the geodesic flow of $g$. We remark, furthermore, that the Riemannian metric $h$ is not, in general, left-invariant. In fact, letting $u\in T_1 G$, it satisfies the following {\it transformation law} 
\begin{eqnarray}
\omega^i_p(p.u) & = & g_p( Y_i(p), p.u )  = g_p( e_i.p , p.u )   =   g_1( \Ad_{p^{-1}}( e_i), u ) \smallskip \label{eq: omega}\\ & = &g_1 ( e_i, ((\Ad_p)^{-1})^\dagger) (u) ) = \omega^i_1 ((\Ad_p^\dagger)^{-1} (u)). 
\label{eq: omega2}\end{eqnarray}
Here $\dagger$ denotes the formal adjoint of a linear map with respect to the semi-Riemannian metric $g$
(note that taking the inverse commutes with taking the formal adjoint). A similar computation shows that $h$ is also not right-invariant,
\begin{equation*} 
\omega^i_p(u.p) = g_p(e_i.p, u.p)=
g_1(R_p (e_i), R_p(u))=
g_1(R_p^\dagger R_p (e_i), u),
\end{equation*} 
unless $g$ is bi-invariant.
Finally, letting $u,v \in T_1 G$, from equation \eqref{eq: omega} it follows that
\begin{equation}\label{eq: h_explicit}
h_p(p.u, p.v)  =  \sum_i  \; g_1 ((\Ad_{p^{-1}})(e_i), u) \;  g_1 ((\Ad_{p^{-1}})(e_i),v). 
\end{equation}

\begin{proposition}\label{prop: Clairaut uniform} Let $(e_i)$, $(\hat e_i)$ be two vector bases of $T_1G$ and  $h, \hat h$ be the Clairaut metrics associated to $g$ from them, respectively. Then $h$ and $\hat h$ are bi-Lipschitz bounded and, in particular, induce uniformly equivalent distances on $G$. Moreover, if the transition matrix $M$  between $(e_i)$ and $(\hat e_i)$ is orthonormal then $h=\hat h$.
\end{proposition}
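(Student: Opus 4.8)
The plan is to reduce both assertions to a single linear-algebra computation relating the two coframes $(\omega^i)$ and $(\hat\omega^i)$ through the transition matrix $M$, and then to exploit the decisive fact that $M$ does not depend on the base point $p\in G$. First I would record how the coframe transforms. Writing the change of basis as $\hat e_i=\sum_j M_{ji}\,e_j$, the right-invariant extensions depend linearly on the initial vector, so $\hat Y_i=\sum_j M_{ji}\,Y_j$; since the dual form $g(\cdot\,,\cdot)$ is linear in its field argument, this gives $\hat\omega^i=g(\hat Y_i,\cdot)=\sum_j M_{ji}\,\omega^j$, an identity that holds at every point of $G$ with the \emph{same} constant coefficients.

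Substituting into the definition of $\hat h$ yields
$$
\hat h=\sum_i \hat\omega^i\otimes\hat\omega^i
=\sum_{j,k}\Big(\sum_i M_{ji}M_{ki}\Big)\,\omega^j\otimes\omega^k
=\sum_{j,k}(MM^{\top})_{jk}\,\omega^j\otimes\omega^k,
$$
so that $\hat h$ is expressed through the original coframe via the fixed symmetric positive-definite matrix $S:=MM^{\top}$ (with the opposite index convention one obtains $M^{\top}M$, which changes nothing). The orthonormal case is then immediate: if $M$ is orthogonal then $S=I$ and $\hat h=\sum_j \omega^j\otimes\omega^j=h$, which settles the last assertion.

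For the bi-Lipschitz claim I would fix $p\in G$ and $v\in T_pG$ and set $a_j:=\omega^j_p(v)$. Then $h_p(v,v)=\sum_j a_j^2=|a|^2$ while $\hat h_p(v,v)=a^{\top}Sa$. Because $S$ is a constant positive-definite matrix, its eigenvalues lie in a fixed interval $[\lambda_{\min},\lambda_{\max}]$ with $0<\lambda_{\min}\le\lambda_{\max}$, whence $\lambda_{\min}|a|^2\le a^{\top}Sa\le\lambda_{\max}|a|^2$ for every $a$. This gives $\lambda_{\min}\,h\le\hat h\le\lambda_{\max}\,h$ uniformly on $G$, which is exactly a bi-Lipschitz bound (taking $c=\min\{\lambda_{\min},1/\lambda_{\max}\}$). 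Uniform equivalence of the induced distances follows as in the proof of Lemma \ref{lem: left uniform}, since bi-Lipschitz bounded Riemannian metrics scale the lengths of all curves—and hence the distances—by bounded factors.

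The computation is routine; the one point deserving emphasis, and the reason the estimate is uniform rather than merely pointwise, is that $M$ is a \emph{constant} matrix. This is in sharp contrast with the adjoint operators $\Ad_p$ appearing in the transformation law \eqref{eq: h_explicit}, whose norms genuinely vary with $p$ and are precisely what the later linear-growth analysis must control. So the only conceptual (rather than technical) obstacle is to keep these two sources of distortion apart: a change of basis contributes a bounded, point-independent factor, whereas moving along $G$ via the $\Ad_p$ dependence is what can ultimately obstruct completeness.
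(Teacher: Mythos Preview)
Your proof is correct and follows essentially the same route as the paper: derive the constant-coefficient relation $\hat\omega^i=\sum_j M_{ji}\,\omega^j$, express $\hat h$ as a quadratic form in the $\omega^j$ with coefficient matrix $MM^{\top}$ (or $M^{\top}M$), and read off the bi-Lipschitz bound from the eigenvalues of this fixed positive-definite matrix, with the orthogonal case giving equality. The only minor difference is that you obtain the coframe transformation directly from the linearity of right translation and of $g$ in its Killing argument, whereas the paper routes it through the $\Ad_p$ formula \eqref{eq: omega2}; your derivation is slightly more direct but leads to the identical computation.
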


\begin{proof}
 The transition matrix $M=( M^i_j)$ satisfies $\hat e_j= \sum_i M^i_j e_i$. From \eqref{eq: omega2},
\begin{eqnarray*} \quad \hat\omega^j_p(p.u)  =  g_1 ( \hat e_j, ((\Ad_p)^{-1})^\dagger) (u) ) = \sum_k M^k_j 
g_1 (  e_k, ((\Ad_p)^{-1})^\dagger) (u)) 
=\sum_k M^k_j \, \omega^k_p(p.u).
 \end{eqnarray*}
Thus,
$$
 \sum_j (\hat \omega^j)^2 
= \sum_{k,l}\sum_j  M^k_j \omega^k M^l_j \omega^l = \sum_{k,l}  \omega^k (M^T\cdot M)^{kl} \, \omega^l 
$$ 
and the required inequality follows by taking a constant $c>0$  to suitably bound
the minimum and maximum eigenvalues of the (positive definite, symmetric) matrix $M^T\cdot M$. For the last assertion, use $M^T\cdot M=I$ (identity matrix).
\end{proof}
\begin{definition}\label{def: Clairaut unif} The set of bi-Lipschitz Clairaut metrics associated to $g$ will be called the {\em Clairaut class of $g$} and the corresponding
 unique uniformity  the {\em Clairaut uniformity} of $g$. 
 Eventually, any representative of the Clairaut class will be  called the {\em Clairaut metric of $g$} if the properties to be considered are independent of the choice.
\end{definition}

\subsection{Completeness}\label{s2.3} 

As a consequence of the fact that  $\omega^i(\dot\gamma(t))$ is a constant for any inextensible geodesic $\gamma$ of $g$ and, thus $h(\dot\gamma(t), \dot\gamma(t))$ is a constant,  one has the following fundamental result.

\begin{theorem}\label{c02} The left-invariant semi-Riemannian metric $g$ is geodesically complete if its associated Clairaut metric $h$ is complete. 
\end{theorem}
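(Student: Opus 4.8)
The plan is to leverage the two facts already assembled: the right-invariant fields $Y_i$ are Killing for $g$, so each $\omega^i(\dot\gamma)$ is constant along any $g$-geodesic $\gamma$, and hence the $h$-speed $h(\dot\gamma,\dot\gamma)=\sum_i\omega^i(\dot\gamma)^2$ is likewise constant. The argument is then a ``no escape in finite time'' argument in the spirit of \cite{RS94}: assuming for contradiction that some maximal geodesic of $g$ is incomplete, I show that its velocity eventually lies in a compact subset of $TG$, which forces it to be extendable and yields the contradiction. Since $h$ is Riemannian (Definition \ref{d_Clairaut}), the constant $C:=\sqrt{h(\dot\gamma,\dot\gamma)}\geq 0$ vanishes exactly when $\dot\gamma\equiv 0$, i.e. $\gamma$ is constant and trivially complete; so I may assume $C>0$.

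First I would fix a maximal $g$-geodesic $\gamma$: to prove $g$ complete it suffices to show its domain is all of $\R$, so suppose its domain has finite right endpoint $b<\infty$ (the left endpoint is handled symmetrically) and, after a time shift, work with $\gamma|_{[0,b)}$. Because the $h$-length of $\gamma|_{[t_1,t_2]}$ equals $C\,(t_2-t_1)$, the $h$-distance obeys $d_h(\gamma(s),\gamma(t))\leq C\,(t-s)$ for $s<t$. Hence $\gamma(t)$ is a Cauchy net for $d_h$ as $t\to b^-$ and, by completeness of $h$ together with Hopf--Rinow, it converges to a point $p\in G$. This is the single place where the completeness hypothesis is genuinely used.

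Next I would pick a compact neighbourhood $K$ of $p$ with $\gamma(t)\in K$ for all $t$ near $b$. The set $A=\{v\in TG:\pi(v)\in K,\ h(v,v)=C^2\}$ is the $h$-sphere bundle of radius $C$ over the compact set $K$, hence compact, and $\dot\gamma(t)\in A$ for $t$ close to $b$. Extracting $t_n\nearrow b$ with $\dot\gamma(t_n)\to w\in A\subset TG$, I would then invoke the extendability of geodesics via the geodesic spray (see, e.g., \cite{ONeill}): by local existence and continuous dependence on initial conditions, the geodesic flow $\Phi_s$ is defined and continuous on a neighbourhood $\mathcal{U}$ of $w$ for all $|s|<\delta$, for some $\delta>0$. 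For $n$ large, $\dot\gamma(t_n)\in\mathcal{U}$, so $\Phi_s(\dot\gamma(t_n))=\dot\gamma(t_n+s)$ is defined for $|s|<\delta$ by uniqueness; choosing $t_n>b-\delta$ extends $\gamma$ beyond $b$, contradicting maximality. Therefore $b=\infty$, and symmetrically the left endpoint is $-\infty$, so $\gamma$ is defined on all of $\R$ and $g$ is complete.

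The main obstacle is the final extension step, and the point deserving care is that the constancy of the $h$-speed only constrains the \emph{fibre} direction of $\dot\gamma(t)$. The compactness of $A$, and thus the crucial subsequential convergence of the \emph{velocities} $\dot\gamma(t_n)$ in $TG$ (not merely of the base points), genuinely rests on the prior conclusion that the base points accumulate in the compact $K$ — which is precisely what completeness of $h$ provides. The analytic content of the extension is otherwise routine ODE theory, but it must be phrased at the level of the spray on $TG$ rather than of the base curve.
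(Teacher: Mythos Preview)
Your proof is correct and follows essentially the same approach as the paper: both use that $h(\dot\gamma,\dot\gamma)$ is constant, hence $\gamma$ has finite $h$-length on bounded intervals, and then invoke completeness of $h$ to force extendibility. The paper's version is terser, citing \cite[Prop.~3.28, Lem.~1.56]{ONeill} for the extension step where you instead spell out the compact-velocity argument via the geodesic spray in the spirit of \cite{RS94}.
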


\begin{proof} The curve $\gamma$ restricted to any bounded interval $I\subset \R$ 
has finite $h$-length. Thus, by the completeness of $h$, $\gamma$ is continuously extensible to the closure of $I$ and, then, it is extensible as a geodesic  of $g$ (see  \cite[Prop. 3.28, Lem. 1.56]{ONeill}). 
\end{proof}    

 As a simple application, an alternative proof of a well-known result on completeness (see, for example,  \cite[Prop. 11.9]{ONeill}) is obtained.
\begin{corollary}\label{c_bi-invariant} Any bi-invariant semi-Riemannian metric $g$ on G is complete.
\end{corollary}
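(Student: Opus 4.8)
The plan is to invoke Theorem \ref{c02} and thereby reduce the completeness of $g$ to the completeness of an associated Clairaut metric $h$; by Proposition \ref{prop: Clairaut uniform} I am free to compute with whichever basis $(e_i)$ of $\g$ is most convenient, since completeness is a bi-Lipschitz invariant of the Clairaut class. The whole argument should rest on a single structural observation: for a \emph{bi-invariant} metric the Clairaut metric is in fact right-invariant.

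To establish this I would start from the right-translation computation already recorded in Section \ref{s2.2}, namely $\omega^i_p(u.p) = g_1(R_p^\dagger R_p (e_i), u)$ for $u \in \g$. When $g$ is bi-invariant every right translation $R_p$ is a $g$-isometry, so $R_p^\dagger R_p = \Id$ and this collapses to $\omega^i_p(u.p) = g_1(e_i, u)$. Writing an arbitrary $w \in T_pG$ as $w = u.p$ with $u = w.p^{-1} \in \g$, this yields
\[
h_p(w,w) = \sum_i \bigl(\omega^i_p(w)\bigr)^2 = \sum_i g_1(e_i, w.p^{-1})^2 = Q(w.p^{-1}),
\]
where $Q(v) := \sum_i g_1(e_i,v)^2$ is a fixed quadratic form on $\g$, independent of $p$. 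The routine checks I would then perform are that $Q$ is positive definite — it vanishes only on the $g_1$-orthogonal complement of $\mathrm{span}(e_i)=\g$, hence only at $0$, using nondegeneracy of $g_1$ — so that $h$ is precisely the right-invariant Riemannian metric determined by $h_1 = Q$. (Choosing $(e_i)$ to be $g_1$-orthonormal would make $Q$ the Wick rotation of $g_1$, but no such choice is needed here.)

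It then remains to observe that right-invariant Riemannian metrics are complete, which I would deduce from the left-invariant case of Lemma \ref{lem: left uniform} by transporting along the inversion diffeomorphism $\iota\colon p \mapsto p^{-1}$: indeed $\iota$ is an isometry from $(G, \iota^* h)$ onto $(G,h)$ and $\iota^* h$ is left-invariant Riemannian, hence complete. Theorem \ref{c02} then finishes the proof. I do not anticipate a serious technical obstacle, but the point to be careful about is conceptual rather than computational: this completeness \emph{cannot} be routed through the linear-growth estimates of Sections \ref{s3}--\ref{s4}, since the Killing form of a non-compact semisimple group is bi-invariant while its adjoint representation has exponential growth. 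The whole force of the argument is that bi-invariance promotes the a priori neither left- nor right-invariant Clairaut metric to a genuinely right-invariant one, for which completeness is automatic.
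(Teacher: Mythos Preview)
Your proposal is correct and follows essentially the same route as the paper: bi-invariance makes each $\omega^i$ right-invariant (via the $R_p^\dagger R_p = \Id$ collapse you cite), hence the Clairaut metric $h$ is a right-invariant Riemannian metric, hence complete, and Theorem \ref{c02} applies. The paper's proof is a two-line sketch of precisely this; your version simply fills in the details (positive definiteness of $Q$, completeness of right-invariant Riemannian metrics via inversion) that the paper leaves implicit.
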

\begin{proof}
As seen above, each $\omega^i$  is right-invariant in this case and, thus, so is the  Clairaut metric $h$. As $h$ is Riemannian,  then it is  complete and Th. \ref{c02} applies to $g$.
\end{proof}

Another direct consequence is the following.

\begin{corollary}\label{cor:mardsen}
Any left-invariant semi-Riemannian metric $g$ on a compact Lie group G is complete.
\end{corollary}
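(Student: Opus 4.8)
The plan is to invoke Theorem~\ref{c02}: it suffices to produce a \emph{complete} Clairaut metric $h$ associated with $g$. First I would fix any basis $(e_i)$ of $\g$ and form $h=\sum_i \omega^i\otimes\omega^i$ as in Definition~\ref{d_Clairaut}. As recalled in Section~\ref{s2.2}, the right-invariant fields satisfy $Y_i(p)=e_i.p=R_p(e_i)$, so at each point $p$ the vectors $(Y_i(p))$ form a basis of $T_pG$; since $g_p$ is nondegenerate, the dual forms $(\omega^i_p)$ form a basis of $T_p^\ast G$, whence $h_p=\sum_i(\omega^i_p)^2$ is positive definite. Thus $h$ is a genuine Riemannian metric on $G$, exactly as asserted by the statement that the $(\omega^i)$ span $T^\ast G$.

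The decisive simplification is the compactness of $G$. Any Riemannian metric on a compact manifold is geodesically complete: its distance balls are relatively compact, so the Hopf--Rinow theorem applies (equivalently, the velocity of a geodesic defined on a finite interval stays in the unit tangent bundle, which is compact, hence the geodesic is extendible). In particular the Clairaut metric $h$ built above is complete.

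Having produced a complete metric in the Clairaut class of $g$, I would conclude by a single application of Theorem~\ref{c02}, which then yields that $g$ is geodesically complete. This mirrors verbatim the argument used for the bi-invariant case in Corollary~\ref{c_bi-invariant}, the only difference being the source of completeness of $h$ (compactness of the manifold rather than right-invariance of $h$).

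I do not expect any real obstacle here: this is precisely the base case corresponding to Marsden's theorem, and its entire content is absorbed into Theorem~\ref{c02} together with the trivial completeness of Riemannian metrics on compact manifolds. The only point deserving a moment's care is the positive-definiteness of $h$, handled above; crucially, no growth estimate on $\Ad$ is required in this compact setting, in contrast with the noncompact analysis of Section~\ref{s4}.
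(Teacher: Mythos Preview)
Your proposal is correct and follows essentially the same approach as the paper: build the Clairaut metric $h$, observe that on a compact manifold any Riemannian metric is complete by Hopf--Rinow, and then apply Theorem~\ref{c02}. The paper's proof is the two-line version of exactly this argument; your added justification of the positive-definiteness of $h$ is a harmless elaboration.
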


\begin{proof}
The Clairaut metric associated to $g$ is complete by the compactness of G (use Hopf-Rinow theorem) and apply Th. \ref{c02}. 
\end{proof}

\begin{remark}\label{r_Marsden} Corollary \ref{cor:mardsen} is also a particular case of  Marsden's  theorem.  However,  
this theorem can be easily reobtained by extending the idea of Clairaut metrics to  homogeneous manifolds.  In this case, one does not have enough pointwise linearly independent Killing vector fields on $M$ to define an (everywhere non-degenerate) $h$ as above.
 However, for each  $p\in M$,  the homogeneity of $M$ permits to define a positive semidefinite metric $h_{p}$ on $M$ which is positive definite in a neighbourhood  $U\ni p$, namely, choose any basis $(e_j)$ of $T_pM$, extend it to Killing vector fields $Y^{(p)}_j$ and put $h_p = \Sigma \omega_{Y^{(p)}_j}^2$.  Using the compactness of $M$, the required complete Riemannian metric $h$ can be found as a finite sum $h=\sum h_{p_k}$.  
\end{remark}

Taking into account Th. \ref{c02} above,   our aim will be to prove the completeness of Clairaut metrics under appropriate hypotheses.

\subsection{Wick rotation} \label{s2.4}
 As shall be seen, a way to prove the completeness of the Clairaut class is to suitably bound it in terms of a left-invariant Riemannian one. Given the left-invariant semi-Riemannian metric $g$, we will choose a representative  $h$ of the  Clairaut class by using an orthonormal basis  $(e_i)$  of $g_1$ and, then, constructing a left-invariant Riemannian metric $\tilde g$ by imposing that $(e_i)$ is also orthonormal for $\tilde g_1$.
 We will say that such metrics $g$, $\tilde{g}$ are obtained  by {\em Wick rotation}. More precisely, let

\begin{equation*}
g_1( e_i, e_j) =\epsilon_i \delta_{ij}, \qquad \epsilon_i=
\left\{ 
\begin{array}{rl}
-1, & i=1, \dots, s \\
1, & i=s+1 \dots, n \end{array} \right. , \qquad 
\delta_{ij} = \left\{ \begin{array}{rl}
1, & i=j \\
0, & i\neq j\
\end{array}.  
\right.
\end{equation*}
Consider the  linear map $\psi$  such that $\psi(e_i)=\epsilon_i e_i$ and let $\tilde{g}$ be the left-invariant Riemannian metric such that  $\tilde{g}_1(e_i, e_j) = \delta_{ij} = g_1(e_i, \psi(e_j))$. Notice that $\psi$ is self-adjoint with respect to both $g_1$ and $\tilde{g}_1$.
Now, taking \eqref{eq: h_explicit} and using the positive definite scalar product $\tilde{g}_1$,  we have that

\begin{eqnarray} 
h_p(p.u, p.v)  & = &  \Sigma_i  \; \tilde{g}_1 ((\Ad_{p^{-1}})(e_i), \psi(u))\; \tilde{g}_1 ((\Ad_{p^{-1}})(e_i),\psi(v)) \smallskip \label{eq: h_explicit0}\\
& = &  \Sigma_i  \; \tilde{g}_1( e_i, (\Ad_{p^{-1}})^\ast (\psi(u))) \;   \tilde{g}_1( e_i, (\Ad_{p^{-1}})^\ast (\psi(v)))  \smallskip \nonumber \\
& = & \tilde{g}_1((\Ad_{p^{-1}})^\ast (\psi(u)),(\Ad_{p^{-1}})^\ast (\psi(v))),   \label{e: h previa}
\end{eqnarray}
where $^\ast$ is the formal adjoint (or transpose)  with respect to the positive definite scalar product $\tilde{g}_1$.  Summing up: 
\begin{proposition}\label{prop: Wick} Let $g, \tilde g$  be Wick rotated metrics and $h$ be the Clairaut metric constructed from one common orthonormal base $(e_i)$. Then $h$ is unique (independent of the chosen $(e_i)$) and
\begin{equation*}
h_p(p.u, p.v) = \tilde{g}_1(\Ad_{p^{-1}}^\ast (\psi(u)),\Ad_{p^{-1}}^\ast (\psi(v))) 
\end{equation*}
\end{proposition}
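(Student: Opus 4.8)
The plan is to verify Proposition~\ref{prop: Wick} by direct computation, carrying the chain of equalities \eqref{eq: h_explicit0}--\eqref{e: h previa} that is already laid out in the excerpt, and then separately arguing the uniqueness (basis-independence) claim. The substance of the formula itself is an algebraic manipulation of \eqref{eq: h_explicit}: starting from
\[
h_p(p.u, p.v) = \sum_i g_1\big((\Ad_{p^{-1}})(e_i), u\big)\, g_1\big((\Ad_{p^{-1}})(e_i), v\big),
\]
I would first rewrite each factor $g_1(w, u)$ in terms of $\tilde g_1$ using the defining relation $\tilde g_1(x, y) = g_1(x, \psi(y))$, equivalently $g_1(x, y) = \tilde g_1(x, \psi^{-1}(y)) = \tilde g_1(x, \psi(y))$ since $\psi^2 = \Id$. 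This turns $g_1((\Ad_{p^{-1}})(e_i), u)$ into $\tilde g_1((\Ad_{p^{-1}})(e_i), \psi(u))$, giving \eqref{eq: h_explicit0}.

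Next I would push the operator $\Ad_{p^{-1}}$ across the inner product onto its $\tilde g_1$-adjoint, using the defining property of $*$: $\tilde g_1(\Ad_{p^{-1}}(e_i), w) = \tilde g_1(e_i, (\Ad_{p^{-1}})^* w)$. Applying this to both factors with $w = \psi(u)$ and $w = \psi(v)$ yields the middle line. The final collapse to \eqref{e: h previa} is the key structural step: since $(e_i)$ is $\tilde g_1$-orthonormal, the sum $\sum_i \tilde g_1(e_i, a)\, \tilde g_1(e_i, b)$ is exactly the expansion of $\tilde g_1(a, b)$ in the orthonormal basis, so it equals $\tilde g_1(a, b)$ with $a = (\Ad_{p^{-1}})^*(\psi(u))$ and $b = (\Ad_{p^{-1}})^*(\psi(v))$. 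This is the one place where orthonormality of $(e_i)$ for $\tilde g$ is genuinely used, and it is what lets the expression close up into a single inner product.

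For the uniqueness assertion I would invoke Proposition~\ref{prop: Clairaut uniform}: any two orthonormal bases of $(g_1, \tilde g_1)$ are related by a transition matrix $M$ that is orthonormal with respect to $\tilde g_1$ (this is precisely what it means for both bases to be $\tilde g_1$-orthonormal), and the last assertion of that proposition states that an orthonormal transition matrix gives $h = \hat h$. One subtlety worth checking is that a change of $g_1$-orthonormal basis preserving the signature form $\epsilon_i \delta_{ij}$ is automatically $\tilde g_1$-orthonormal; this holds because the two bilinear forms share the same orthonormal frame by construction, so a $g$-orthonormal transition is an element of the (pseudo-)orthogonal group but, restricted to bases adapted to the same $\psi$, it coincides with a $\tilde g$-orthonormal one. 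I would state this briefly and lean on Proposition~\ref{prop: Clairaut uniform} rather than re-proving it.

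I do not expect a serious obstacle here, since the computation is essentially transcribed from the displayed equations; the only thing requiring care is bookkeeping of the adjoints and the fact that $\psi^{-1} = \psi$. The mildly delicate point is the uniqueness statement, where one must be careful that ``orthonormal with respect to $g_1$'' and ``orthonormal with respect to $\tilde g_1$'' are being used consistently when appealing to Proposition~\ref{prop: Clairaut uniform}; making explicit that the relevant transition matrices are $\tilde g_1$-orthogonal is the crux of that half of the argument.
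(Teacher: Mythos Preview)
Your proposal is correct and follows essentially the same approach as the paper: the displayed formula is obtained by carrying out the chain \eqref{eq: h_explicit0}--\eqref{e: h previa} exactly as you describe, and uniqueness is deduced from the last assertion of Proposition~\ref{prop: Clairaut uniform}. Your discussion of the uniqueness subtlety is slightly more involved than necessary---since $(e_i)$ and $(\hat e_i)$ are both assumed $\tilde g_1$-orthonormal, the transition matrix automatically satisfies $M^T M = I$, which is all Proposition~\ref{prop: Clairaut uniform} requires (alternatively, the derived formula is manifestly basis-free, which already gives uniqueness).
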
    
\begin{proof} Use   \eqref{e: h previa} and, for the uniqueness of $h$,  the last assertion in Prop. \ref{prop: Clairaut uniform}. 
\end{proof}

\begin{remark}\label{r: Wick} We have, in particular, that if the left-invariant metric $g$ is Riemannian, then the Wick rotated metric is $\tilde g=g$. Notice that even in this case the expression of the Clairaut metric $h$ is non-trivial (see Remark \ref{r_BEWARE}).  
\end{remark}

\subsection{Abstract formalism}\label{s_Abstractformalism}\label{s2.5}
Let $\hbox{Sym}(\g)$ be the vector space of symmetric bilinear  forms  on $\g$ and $\hbox{Sym}^+(\g )$  the convex subset containing all the positive definite ones (i.e, the Euclidean scalar products). Choose   $m\in \hbox{Sym}^+(\g )$ and let $\hbox{SEnd}(\g,m)$ be the vector space of endomorphisms on $\g$ which are self-adjoint for $m$. Recall that there exists a natural isomorphism:
$$
\hbox{Sym}(\g) \rightarrow \hbox{SEnd}(\g,m), \qquad 
b\mapsto \Phi^b \quad \hbox{where} \quad b(u,v)= m(u,\Phi^b(v)), \quad \forall u,v\in \g .  
$$
Let  $\hbox{GL}(\g)$ be the group of vector automorphisms of $\g$. Any $A\in $ $\hbox{GL}(\g)$ induces a map
$$
\hbox{Sym}(\g )\rightarrow \hbox{Sym}(\g ), \qquad m\mapsto A.m
\quad \hbox{where} \quad (A.m)(u,v)= m(A^{-1}(u),A^{-1}(v)), \quad \forall u,v\in \g
$$ 
(this naturally restricts to a map
$\hbox{Sym}^+(\g )\rightarrow \hbox{Sym}^+(\g )$). 
Any representation of $G$ in $\g$ permits to extend each $m\in \hbox{Sym}^+(\g )$ to a Riemannian metric on $G$ (as well as any $b\in \hbox{Sym}(\g )$ to a  semi-Riemannian or degenerate one). Explicitly for the adjoint representation  $\Ad: G\rightarrow $  $\hbox{GL}(\g)$:
$$
\hbox{Sym}^+(\g ) \rightarrow \hbox{Riem} (G), \qquad m\mapsto \Ad.m \qquad \hbox{where} \quad (\Ad.m)_p (p.u,p.v)= m(\Ad_{p^{-1}}(u),\Ad_{p^{-1}}(v)),
$$
for all $u,v\in \g$,  $p\in G$ 
(notice that the trivial representation, $p\mapsto $ Identity, yields the left-invariant metrics).

In this abstract framework, the Clairaut metrics are constructed as follows. Given any left-invariant metric $g$ on $G$ and basis $(e_i)$ of $\g$, consider the positive scalar product $m$ such that $(e_i)$ is an $m$-orthonormal basis. Put $b=g_1$ and take the $m$-self-adjoint endomorphism
$\Phi^{b=g_1}$, such that 
$$g_1(e_i,e_j)= m(e_i, \Phi^{g_1}(e_j))$$ for all $i,j$. This is regardless of whether $(e_i)$ is $g_1$-orthonormal or not and $\Phi^{g_1}$ is $g_1$-self-adjoint or not.  Then, formula \eqref{eq: h_explicit} still follows from \eqref{eq: h_explicit0} putting $\psi=\Phi^{g_1}$, i.e.:
\begin{eqnarray*}
h_p(p.u, p.v) & = & \Sigma_i  \; m ((\Ad_{p^{-1}})(e_i), \Phi^{g_1}(u))\; m((\Ad_{p^{-1}})(e_i),\Phi^{g_1}(v)) \\  & = & m(\Ad_{p^{-1}}^\ast \circ\Phi^{g_1}(u),\Ad_{p^{-1}}^\ast \circ\Phi^{g_1}(v)) 
\end{eqnarray*}
where $^\ast$ is the $m$-adjoint. Using that $\Phi^{g_1}$ is a linear isomorphism, 
$\Ad_{p^{-1}}^\ast \circ\Phi^{g_1}=
\left((\Phi^{g_1})^{-1}\circ\Ad_{p}^\ast \right) ^{-1}$ and
\begin{equation}\label{e: abstract final}
h_p= \left((\Phi^{g_1})^{-1}\circ\Ad_{p}^\ast \right) .m. 
\end{equation}

\begin{remark} Regarding the abstract  formula \eqref{e: abstract final}, we observe the following:

\begin{itemize}
\item[(1)] The consistency in the use of $m$ above comes from the uniqueness of $h$ in Prop. \ref{prop: Wick}. 

\item[(2)] The change of $m$ by a different Euclidean scalar product $\hat m$ corresponds to a change of basis $(e_i)\leadsto~(\hat e_i)$, which was shown to imply bi-Lipschitz bounded Clairaut metrics in  Prop. \ref{prop: Clairaut uniform}. The previous
formula \eqref{e: abstract final} permits to identify where the bi-Lipschitz bounds come from. 
 
 \item[(3)] However, the change of $\Phi^{g_1}$ (i.e. of the left-invariant metric $g$) do {\em not} lead to bi-Lipschitz bounds, as will be stressed in Remark \ref{r_BEWARE} and shown explicitly in \S \ref{s6}. 
 
 \item[(4)] By using different combinations of the 1-forms $\omega^i$ in Def. \ref{d_Clairaut}, one can construct norms and Clairaut Finsler metrics as, for example,  
 Max$_i |\omega^i|$, 
 $\sum_i |\omega^i|$ or $\sqrt[4]{\sum_i (\omega^i)^4}$  (the two first examples  might be simpler to compute in some cases). Notice that, 
using  \eqref{eq: omega}, a Finslerian
combination as the previous ones would lead to an expression similar to \eqref{eq: h_explicit} and, then, a bi-Lipschitz bound as in Prop. \ref{prop: Clairaut uniform} would also hold. Such a metric would be also a first integral of the geodesics and, following the proof of  Th.~\ref{c02}, the completeness of a Finsler Clairaut metric also  implies the completeness of $g$ (the $g$-adjoint $\dagger$ can also be extended directly to these Finsler metrics).
\end{itemize}
\end{remark}

We can consider also the action of the group  $\mathrm{Aut}(\g) < \mathrm{GL}(\g)$ containing the Lie algebra automorphisms  of $\g$. $\Aut(\g)$ acts on $\mathrm{Sym}(\g)$ 
(as a restriction of the action of GL$(\g )$ above) and the next result  summarizes  the properties of the  orbits of the non-degenerate bilinear forms in $\mathrm{Sym}(\g)$.
Recall, by Lie's second theorem, that Aut$(\g)$ is in a natural one-to-one correspondence with the Lie group  automorphisms of the universal cover of $G$.

\begin{proposition}\label{p_aut_orbits} Let $G$ be a connected  Lie group, $g$ a left-invariant semi-Riemannian metric,  $\varphi \in \mathrm{Aut}(\g)$ and $g^\varphi$ the left-invariant metric such that $(g^\varphi )_1= \varphi . g_1.$ (i.e.,  the 
 $\varphi$-pushforward  of $g_1$). Then:

(1) The metric $g^\varphi$ is complete if and only if so is $g$. 

(2) All the left-invariant semi-Riemannian metrics in each orbit of $\mathrm{Sym}(\g)$ by the action of $\mathrm{Aut}(\g)$ are either complete or incomplete.

(3) All the Clairaut metrics associated to  left-invariant metrics on the same orbit are bi-Lipschitz bounded.
\end{proposition}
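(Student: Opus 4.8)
The plan is to integrate $\varphi$ to a Lie group automorphism and transport every structure through it; the whole proposition then reduces to the fact that this automorphism is an isometry.

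\textbf{Parts (1)--(2).} By Lie's second theorem (recalled just above the statement), $\varphi$ is the differential at $1$ of a unique Lie group automorphism $\Phi$ of the universal cover $\tilde G$. First I would pass to $\tilde G$, where $g$ and $g^\varphi$ lift to left-invariant metrics with the same values at the identity as downstairs, and where the covering projection $\tilde G\to G$ is a semi-Riemannian covering; hence a metric on $G$ is complete if and only if its lift is (this transfer, valid for any signature, is standard, cf. \cite{ONeill}). The core claim is that $\Phi\colon(\tilde G,g)\to(\tilde G,g^\varphi)$ is an isometry. At the identity this is immediate, since $(\Phi^\ast g^\varphi)_1(u,v)=g^\varphi_1(\varphi u,\varphi v)=(\varphi.g_1)(\varphi u,\varphi v)=g_1(u,v)$. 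To propagate it to an arbitrary $p$ I would use the single structural identity $d\Phi_p\circ dL_p=dL_{\Phi(p)}\circ\varphi$ (the differential of $\Phi\circ L_p=L_{\Phi(p)}\circ\Phi$) together with the left-invariance of both metrics, which reduces the computation at $p$ to the one at $1$. As isometries preserve geodesic completeness, $g$ is complete iff $g^\varphi$ is, giving (1); and (2) is just the restatement of (1) over an orbit, since two non-degenerate forms lie in the same $\Aut(\g)$-orbit precisely when the associated left-invariant metrics are of the form $g$ and $g^\varphi$.

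\textbf{Part (3).} Keeping the same $\Phi$, I would track its effect on the ingredients of the Clairaut construction. Because $\Phi$ is a homomorphism, the analogous identity $d\Phi_p\circ dR_p=dR_{\Phi(p)}\circ\varphi$ shows that $\Phi$ carries the right-invariant field extending $e$ to the one extending $\varphi e$, i.e. $\Phi_\ast Y_e=Y_{\varphi e}$. Feeding this into the definition $\omega^i=g(Y_{e_i},\cdot)$ and using the isometry from Parts (1)--(2), a short computation gives $\Phi^\ast(\omega^{\varphi,i})=\omega^i$ once the Clairaut metric of $g^\varphi$ is built from the matched basis $(\varphi e_i)$; summing squares yields $\Phi^\ast h^\varphi=h$. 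Thus $\Phi$ is an \emph{isometry} between a Clairaut metric of $g$ and one of $g^\varphi$, so the two are bi-Lipschitz equivalent through $\Phi$. I would then invoke Proposition~\ref{prop: Clairaut uniform}: since any two Clairaut metrics of a fixed metric are bi-Lipschitz bounded, the conclusion does not depend on the bases chosen and extends to every pair of metrics in the orbit.

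\textbf{Main obstacle.} The delicate point is that the bi-Lipschitz relation of Part (3) is realized through the equivariant diffeomorphism $\Phi$ and \emph{not} through the identity map. Indeed, writing $h^\varphi$ via \eqref{e: abstract final} shows that the pointwise distortion between $h$ and $h^\varphi$ at a fixed point involves a conjugation of the shape $(\Ad_p^\ast)^{-1}(\cdots)\Ad_p^\ast$, whose operator norm is unbounded in $p$ in general; this matches the warning (in the remark following \eqref{e: abstract final}, and in the affine-group computations later) that merely changing the left-invariant metric does not produce identity-bi-Lipschitz bounds. What rescues the orbit case is exactly the existence of $\Phi$, which realigns the two metrics as an isometry. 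The secondary technical nuisance is the bookkeeping with the universal cover: $\Phi$ need not descend to $G$, so I would state the equivalence upstairs on $\tilde G$ and transfer completeness through the covering, being careful that this last step is legitimate in indefinite signature.
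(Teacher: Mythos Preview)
For Parts (1)--(2) your argument coincides with the paper's: lift to the universal cover, integrate $\varphi$ to a group automorphism $\Phi$, and check that $\Phi:(\tilde G,g)\to(\tilde G,g^\varphi)$ is an isometry.

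For Part (3) you take a genuinely different route, and it is the sounder one. The paper builds both Clairaut metrics from the \emph{same} basis $(e_i)$, writes $(\omega^\varphi)^i_p(p.u)=\tilde g_1(\Ad_{p^{-1}}e_i,(\varphi^{-1})^*\psi\varphi^{-1}u)$ against $\omega^i_p(p.u)=\tilde g_1(\Ad_{p^{-1}}e_i,\psi u)$, and invokes ``all scalar products are equivalent'' to deduce an identity-map bi-Lipschitz bound. You instead use the \emph{matched} basis $(\varphi e_i)$ for $g^\varphi$, obtain $\Phi^*h^\varphi=h$ (an isometry between Clairaut representatives on $\tilde G$), and then absorb the basis choice via Proposition~\ref{prop: Clairaut uniform}. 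The paper's step is problematic: the two bilinear forms in play are $\tilde g_1(\cdot,\psi\cdot)=g_1$ and $\tilde g_1(\cdot,(\varphi^{-1})^*\psi\varphi^{-1}\cdot)=(g^\varphi)_1$, which are \emph{indefinite}, so no pointwise inequality between them is available; and even an operator-norm comparison of $\psi$ and $(\varphi^{-1})^*\psi\varphi^{-1}$ would not control the projections onto the moving frame $(\Ad_{p^{-1}}e_i)$ uniformly in $p$. Indeed the identity-bound fails already on $\mathrm{Aff}^+(\R)$: with $\varphi=\mathrm{diag}(1,\beta)\in\Aut(\mathfrak{aff}(\R))$, $\beta\neq1$, one checks that $h^\varphi(Y_1,Y_1)/h^{(-1)}(Y_1,Y_1)\to0$ along the ray $y=\beta x$. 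So your ``Main obstacle'' diagnosis is exactly right: what survives is bi-Lipschitz equivalence \emph{through $\Phi$}, and this is still enough for every use the paper makes of (3) (completeness within an orbit, and the three Clairaut classes on the affine group, where $G$ is already simply connected).
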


\begin{proof} 
(1) The completeness of $G$ with a left-invariant metric is equivalent to the completeness of its universal covering, so, we can assume that $G$ is simply connected. Then  $\varphi$ can be regarded as the differential at $1$ of a Lie group automorphism $\rho$  which, regarded as a map between semi-Riemannian manifolds  $\rho: (G,g)\rightarrow  (G,g^\varphi) $, is an isometry. Thus,   a curve $\gamma$ on $G$ will be a  complete geodesic with respect to  $g$  if and only if so is    $\rho\circ\gamma$  with respect to  $g^\varphi$.

(2) Straightforward from (1) as, trivially, the action is  transitive on each orbit.

(3)  Consider an orthonormal basis $(e_i)$ of $\g$ with respect to $g_1$. Let $\tilde g_1$ be the Wick rotated Euclidean scalar product and $\psi$ the self-adjoint map such that $\tilde g_1 (\cdot, \cdot) = g_1( \cdot, \psi \cdot)$. Let $\omega^i$ and $(\omega^\varphi)^i$ be the Clairaut first integrals obtained from $(g, (e_i))$ and $(g^\varphi, (e_i))$, respectively. We have that 
$$(\omega^\varphi)^i_p(p.u)  = \tilde g_1(\varphi^{-1} (\Ad_{p^{-1}} (e_i)), \psi \varphi^{-1}(u))  = \tilde g_1( \Ad_{p^{-1}}(e_i), (\varphi^{-1})^\ast \psi \varphi^{-1}(u)).$$
Since $(\varphi^{-1})^\ast \psi \varphi^{-1}$  is $\tilde g_1$-self-adjoint, $\tilde g_1(\cdot, (\varphi^{-1})^\ast \psi \varphi^{-1}(\cdot))$ is thus another scalar product and so is $\tilde g_1 (\cdot, \psi \cdot)$. Therefore, since all scalar products are equivalent, we have that $(\omega^\varphi)^i$ is bi-Lipschitz bounded to $\omega^i$, since $\omega^i(p.u) = \tilde g_1 (\Ad_{p^{-1}}(e_i), \psi(u))$. Thus, we can conclude that the Clairaut metrics associated to $g$ and $g^\varphi$ are bi-Lipschitz bounded.
\end{proof}

\section{Completeness under growth bounds}\label{s3}
\subsection{General result on preservation of completeness}
                                                                                                                                                                                                                                                                                                                                                                                                        In order to find a natural sufficient condition for the completeness of the Clairaut metrics, recall first the following general result.

\begin{proposition}\label{p_03}
Let $(M,g_R)$ be any (non-compact) connected complete Riemannian manifold, choose $x_0\in M$ and let $M\ni x \mapsto d_R(x)$ be the corresponding distance function from $x_0$. 
 Let 
 $\varphi: [0,\infty[ \rightarrow  ]  0,\infty[$ be any locally Lipschitz function which is {\em primarily complete}  i.e., satisfying
\begin{equation} \label{e: phi}
 \int_0^\infty \frac{1}{\varphi(r)}dr= \infty.
\end{equation} 
If $h$ is a Riemannian metric on $M$ with pointwise norm $\Vert \bdot \Vert_h$  satisfying:
$$ \Vert v_x\Vert_h \geq \frac{\Vert v_x \Vert_{R}}{ \varphi(d_R(x)) },  \qquad  \forall x\in M, v_x\in T_xM,$$
then $h$ is complete. In this case, $\Vert \bdot \Vert_h$ is said  to be {\em lowerly primarily bounded} with respect to the (primarily complete) {\em bounding  function} $\varphi$.

In particular,  this occurs when the bounding function $\varphi$ grows at most linearly,  that is, $\varphi(r)\leq 
a + b\, r$ for some constants $a, b \geq 0$, so that 
\begin{equation}\label{e: phi_lin}
 \Vert v_x\Vert_h \geq \frac{
\Vert v_x\Vert_{R}}
{a +b\, d_R(x)}, \qquad  \forall x\in M, v_x\in T_xM.
\end{equation}
\end{proposition}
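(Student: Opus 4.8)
The plan is to establish geodesic completeness of $h$ through Hopf--Rinow, by ruling out the possibility that an inextensible unit-speed $h$-geodesic escapes to infinity in finite parameter time. Concretely, I would take a unit-speed ($h$-speed one) geodesic $\gamma$ defined on a maximal interval and, fixing an origin, focus on its forward branch $[0,\beta)$, arguing by contradiction that $\beta<\infty$ is impossible. The crux is to bound how fast $\gamma$ can recede from $x_0$ in $g_R$-distance, exploiting the hypothesis rewritten as $\Vert\dot\gamma(t)\Vert_{R}\le \varphi(d_R(\gamma(t)))\,\Vert\dot\gamma(t)\Vert_h$.

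To do so, set $\rho(t):=d_R(\gamma(t))$. Since $d_R$ is $1$-Lipschitz for $g_R$ and $\gamma$ is locally Lipschitz, $\rho$ is absolutely continuous with $|\rho'(t)|\le \Vert\dot\gamma(t)\Vert_R$ a.e., whence $|\rho'(t)|\le \varphi(\rho(t))\,\Vert\dot\gamma(t)\Vert_h$ a.e. I then introduce the primitive
\[
\Psi(r):=\int_0^r \frac{ds}{\varphi(s)},
\]
which is $C^1$, strictly increasing, and --- precisely by the primary completeness \eqref{e: phi} --- a homeomorphism of $[0,\infty)$ onto itself. The chain rule for the absolutely continuous composition $\Psi\circ\rho$ gives $\big|\tfrac{d}{dt}\Psi(\rho(t))\big|=\tfrac{|\rho'(t)|}{\varphi(\rho(t))}\le \Vert\dot\gamma(t)\Vert_h$ a.e.; integrating and using that $\gamma$ is unit speed (so its $h$-length on $[0,t]$ equals $t$) yields $\Psi(\rho(t))\le \Psi(\rho(0))+\beta$. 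Applying $\Psi^{-1}$ bounds $\rho$ uniformly: $d_R(\gamma(t))\le R_0:=\Psi^{-1}(\Psi(\rho(0))+\beta)<\infty$ for every $t\in[0,\beta)$.

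Hence $\gamma$ stays inside $K:=\overline{B}_R(x_0,R_0)$, which is compact since $(M,g_R)$ is complete (Hopf--Rinow). Then $\dot\gamma(t)$ lies in the $h$-unit sphere bundle over $K$, a compact subset of $TM$, so the standard escape lemma (uniform positive existence time for the geodesic flow over compact subsets of $TM$) extends $\gamma$ past $\beta$, contradicting maximality; the same works at the lower endpoint, proving completeness. The genuine obstacle is the measure-theoretic bookkeeping of the middle paragraph --- the absolute continuity of $\rho=d_R\circ\gamma$ and the validity of the chain rule for $\Psi\circ\rho$ --- everything else being routine; alternatively one may finish through the finite-length criterion, observing that $\varphi(d_R)\le \max_{[0,R_0]}\varphi=:C$ on $K$, so $\gamma$ has finite $g_R$-length $\le C\beta$ and thus converges in $M$. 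Finally, the \emph{in particular} clause follows at once: an at most linear $\varphi(r)\le a+br$ obeys $\int_0^\infty \frac{dr}{a+br}=\infty$, hence is primarily complete, and \eqref{e: phi_lin} is exactly the lower bound with bounding function $a+br$, so the general statement applies.
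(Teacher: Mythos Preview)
Your argument is correct and is essentially the same as the paper's: both hinge on the differential inequality $|\rho'(t)|\le \varphi(\rho(t))\,\Vert\dot\gamma(t)\Vert_h$ for $\rho=d_R\circ\gamma$ and the primitive $\Psi(r)=\int_0^r ds/\varphi(s)$, whose divergence is exactly the primary completeness hypothesis. The only cosmetic difference is the framing: the paper parametrizes an arbitrary diverging curve by $g_R$-arclength and shows its $h$-length is $\ge \Psi(r(t))-\Psi(0)\to\infty$, whereas you parametrize a maximal $h$-geodesic by $h$-arclength and invert $\Psi$ to trap it in a $g_R$-ball; these are dual versions of the same estimate, and your handling of the absolute continuity of $\rho$ and of $\Psi\circ\rho$ is the same Rademacher/chain-rule step the paper invokes.
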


\begin{proof}
 Recalling Hopf-Rinow theorem, it is enough to check that any (regular, piecewise-) smooth  curve $\gamma$ on $M$ starting at $x_0$ which is diverging (i.e., its image is not included in any compact subset of $M$) must have infinite $h$-length. 

 With no loss of generality, we can assume that $\gamma$ is reparametrized with respect to the 
arclength for $g_R$, thus  $\gamma: [0,\infty) \rightarrow M$,  by the completeness of $g_R$. The function $d_R$ is $g_R$-Lipschitz everywhere and   smooth outside the cut locus from $x_0$. The composition   $d_R\circ \gamma$ is $g_R$-Lipschitz  with constant 1 too, as:
\begin{equation*}
|(d_R\circ \gamma) (t+s)- (d_R\circ \gamma) (t)| \leq d_R(\gamma (t+s), \gamma (t)) \leq \hbox{length}_R(\gamma|_{[t,t+s]}) = |s|, 
\end{equation*}
the first inequality following from the triangle one. By Rademacher theorem  $r:= d_R\circ \gamma$  is differentiable almost everywhere and, then, its derivative $\dot r$  satisfies
\begin{equation}\label{er}
\dot r (t) \leq 
|\dot r(t)| \leq 1= \Vert \dot \gamma(t)\Vert_R \leq \varphi(r(t)) \Vert \dot \gamma(t) \Vert_h
\qquad \qquad \hbox{a.e. for } \; t\in [0,\infty).
\end{equation}
As $r(0)=0$ and $\lim_{t\rightarrow \infty} r(t)=\infty$ (recall that $g_R$ is complete),  
$$
\hbox{length}_h(\gamma) = \int_0^\infty 
\Vert \dot \gamma(t) \Vert_h dt \geq
\int_0^\infty 
 \frac{\dot r(t)}{\varphi(r(t))} dt 
= \int_0^\infty 
 \frac{dr}{\varphi(r)} =\infty, 
$$
(the inequality by \eqref{er} and, then,  a change of variable in an interval), as required.
 \end{proof}

\begin{remark}\label{r_log} 
 (1)  Similar type of bounds  have been known for several purposes on completeness since the seventies (as explained in the classical  Abraham-Marsden's book, \cite[p. 233]{AbMarsden}) 
 and applications to the trajectories of many mechanical systems are known also, \cite{CRSanchez}. In our case, a more straightforward approach is used  and a simple self-contained  proof adapted to our case is provided  (compare with  the review \cite{Sanchez-survey}).   

 (2) With more generality, $\varphi$ is primarily complete not only when it grows at most linearly but also whenever $\varphi(r)\leq r (\log r)(\log  (\log r))...(\log^k(\dots    (\log r)))$ for large $r$ and some $k\in \N$. Next, we focus on the at most linear hypothesis because this  is  natural in the framework of Lie groups (as they are analytic), however, the general primary bound could be also used in what follows.   
\end{remark}

\subsection{Non-preservation of uniformities}

Notice that, for $\varphi$ satisfying \eqref{e: phi}, the inequalities
\begin{equation}
\label{r: no_unif}
 \frac{\Vert v_x\Vert_R}{ \varphi(d_R(x)) } \leq \Vert v_x\Vert_h \leq  \varphi(d_R(x))\Vert v_x\Vert_R ,  \qquad  \forall x\in M, v_x\in T_xM, \end{equation}
imply that $g_R$ is complete if and only if so is $h$; in this case, $g_R$ and $h$ will have the same Cauchy sequences. However, even in this case the uniformities of $g_R$ and $h$ may be different as, in particular, the following example shows.

\begin{example}\label{Ex_Unpreserved uniformities} [Unpreserved uniformities]  Consider $R=dz^2$ and $h=dz^2/(1+z^2)$,  $z\in \R$ (this is a particular case of \eqref{e: phi_lin} and  \eqref{r: no_unif}). These metrics are both complete, they  have the same Cauchy sequences and the uniformity of $h$ is less fine than the one of $R$ (as $d_h\leq d_R$). The converse would hold if and only if for any $\epsilon>0$, there exists $\delta>0$ such that if $d_h(x,y)<\delta$ then $d_R(x,y)<\epsilon$ (for this criterion, see \cite[Chap. IX.3]{Bourbaki-b}). We will find a contradiction for, say, $\epsilon=1$ as follows. Recall $\int dx/\sqrt{1+x^2}=$ arcsinh$(x)$, thus, if $0<x<y$: 
$$d_h(x,y)= \hbox{arcsinh}(y)-\hbox{arcsinh}(x).$$ Taking $\sinh$ on both sides one has: 
$$\begin{array}{rl}
\sinh(d_h(x,y))= &  y \sqrt{1+x^2}-x\sqrt{1+y^2}=
\frac{y^2-x^2}{y \sqrt{1+x^2}+x\sqrt{1+y^2}}=(y-x)\frac{y+x}{y \sqrt{1+x^2}+x\sqrt{1+y^2}}
\\ 
=& d_R(x,y) \frac{y+x}{y \sqrt{1+x^2}+x\sqrt{1+y^2}}. \end{array}
$$
Now, choose any $\delta>0$ and take, for each $x>0$, $y(x):=x+ \delta(x)$ with $\delta(x)>0$ such that $d_h(x,y(x))=\delta/2$ (such a $\delta(x)$ exists by the completeness of $h$). Solving for $d_R(x,y=y(x))$ above:
$$\begin{array}{rl}
d_R(x,y(x) )= & \sinh(\delta/2)\frac{y(x) \sqrt{1+x^2}+x\sqrt{1+y^2(x)}}{y(x)+x} \geq
\sinh(\delta/2)\frac{ (x+ \delta(x))x +x(x+\delta(x))}{y(x)+x} 
\\ = & 
\sinh(\delta/2)\frac{2x+ 2 \delta(x)}{2x+\delta(x)}x \geq \sinh(\delta/2) x,
\end{array}
$$
and the last term diverges when $x\rightarrow \infty$ (taking, in particular, values greater than $\epsilon=1$).
\end{example}

\section{Groups of linear growth}\label{s4}

\subsection{Development of the notion} Let 
 $g_R$ be a left-invariant Riemannian metric on a Lie group $G$. Consider the map $r: G \longrightarrow \R$ where $r(p) = d_R(1, p)$ and $d_R$ is the distance induced by the Riemannian metric $g_R$, more precisely 
\begin{equation*}
r(p) = d_R(1,p) =  \inf_c \left\{ \int_{t_0}^{t_1} \Vert \dot{c}(t)\Vert_R  \, dt \, | \, c \text{ curve}, \, c(t_0) =1, \ c(t_1) = p \right\}.
\end{equation*}

\begin{lemma}\label{l_rg_rg-1}
For a left-invariant Riemannian metric $g_R$, we have that $r(p) = r(p^{-1})$, for every $p\in G$.
\end{lemma}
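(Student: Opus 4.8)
The plan is to exploit the fact that left-invariance of $g_R$ makes every left translation $L_a$ an isometry of $(G, g_R)$, and to combine this with the symmetry of the Riemannian distance function. No heavy machinery is needed; the statement is a short consequence of these two facts.

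First I would record the key property that isometries preserve distances. Since $L_a^\ast g_R = g_R$ for every $a \in G$, the map $L_a$ preserves the $g_R$-length of every (piecewise) smooth curve, and hence preserves the infimum defining $r$; that is,
\[
d_R(L_a x, L_a y) = d_R(x, y) \qquad \text{for all } a, x, y \in G.
\]
This left-invariance of the distance is exactly the same observation already used implicitly in Lemma~\ref{lem: left uniform} when identifying the induced uniformity with the left uniformity of $G$.

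Then the conclusion is immediate. Taking $a = p$, $x = 1$ and $y = p^{-1}$ gives $d_R(p, 1) = d_R(1, p^{-1})$, i.e. $r(p^{-1}) = d_R(p, 1)$. Since the Riemannian distance is symmetric, $d_R(p, 1) = d_R(1, p) = r(p)$, and therefore $r(p^{-1}) = r(p)$, as required.

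There is essentially no obstacle here; the only point worth flagging is that one should \emph{not} attempt to use the inversion map $q \mapsto q^{-1}$ directly, since it is not an isometry of a left-invariant metric in general. The clean route is via left-invariance together with the symmetry of $d_R$. Equivalently, one may argue at the level of curves by reversing a length-minimizing curve from $1$ to $p^{-1}$ and translating it by $L_p$ to obtain a curve of equal length from $p$ to $1$; but the distance-level argument above is the shortest.
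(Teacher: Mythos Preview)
Your proof is correct and follows essentially the same approach as the paper: both use that $L_p$ is a $g_R$-isometry to deduce $d_R(1,p^{-1}) = d_R(L_p(1), L_p(p^{-1})) = d_R(p,1)$, and then invoke the symmetry of $d_R$. Your additional remarks about avoiding the inversion map and the curve-level reformulation are sound but not needed for the argument.
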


\begin{proof}
The statement is immediate from the fact that each left translation $L_p$ is a $g_R$-isometry; hence, $d_R(1, p^{-1}) = d_R(L_p(1), L_p(p^{-1})) = d_R(p,1)$.
\end{proof}

\begin{definition}[linear growth]\label{def:glg}
We say that a Lie group $G$ has  (at most)  linear growth if there exist a left-invariant Riemannian metric $g_R$ on $G$, a Euclidean scalar product with norm $\Vert \bdot \Vert$ on $\g$ and constants $a, b$ such that 
\begin{equation}
\dfrac{\Vert u \Vert}{a+b\, r(p) } \leq \Vert \Ad_p (u)\Vert \leq (a+b\, r(p)) \Vert u \Vert \label{eq:slg}
\end{equation}
for every $p\in G$ and for every $u\in \g$.
\end{definition}

 \begin{remark}\label{r_3 norms} If a  Lie group satisfies the linear growth condition for a left-invariant Riemannian metric $g_R$ and a  scalar product norm $\Vert \bdot \Vert$ then it satisfies this condition  with respect to any other   ${g'_R}$ and  $\Vert \bdot \Vert'$. Indeed, this occurs taking into account that there exists $c>0$ such that 
$$c \Vert \Ad_p (u) \Vert \leq \Vert \Ad_p (u) \Vert' \leq \Vert \Ad_p (u) \Vert/c , \quad 
c \Vert u \Vert \leq \Vert u \Vert'\leq \Vert u \Vert/c, \quad   c \, r(p) \leq  r'(p) \leq  r(p)/ c
,  $$ 
and using \eqref{eq:slg}.
 What is more, linear growth can be defined analogously with respect to any left-invariant norm. So, with no loss of generality, we will choose a left-invariant Riemannian metric $g_R$ and  $\Vert \bdot \Vert$ will be the norm associated to $(g_R)_1$. 
\end{remark}

\begin{proposition}\label{p_linearg} 
For a Lie group $G$, the following conditions are equivalent:
\begin{enumerate}
\item $G$ has linear growth.
\item There exist constants $a, b$ such that $\dfrac{\Vert u \Vert}{a+b\, r(p) } \leq \Vert \Ad_p (u)\Vert$, for every $p\in G$, for every $u\in\g$.
\item There exist constants $a, b$ such that $\Vert \Ad_p (u)\Vert \leq (a+ b\, r(p))\Vert u \Vert$, for every $p\in G$, for every $u\in\g$. 
\end{enumerate}
\end{proposition}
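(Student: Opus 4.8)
The plan is to observe that condition (1) is, by definition, precisely the conjunction of the two one-sided bounds that appear separately in (2) and (3); hence (1) $\Rightarrow$ (2) and (1) $\Rightarrow$ (3) are immediate, and the entire content of the proposition is that each one-sided bound already forces the other. I would therefore concentrate on establishing the equivalence (2) $\Leftrightarrow$ (3). Once this is done, the equivalence with (1) follows at once, and --- as I will arrange --- the constants transfer unchanged, so that (2) holding with a pair $(a,b)$ yields (3) with the \emph{same} $(a,b)$ and conversely; either one then gives (1) with that same pair.

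The two facts driving the argument are: first, that $\Ad$ is a group homomorphism, whence $\Ad_{p^{-1}} = (\Ad_p)^{-1}$ for every $p\in G$; and second, Lemma \ref{l_rg_rg-1}, which gives $r(p) = r(p^{-1})$. To deduce (2) from (3), I would start from the upper bound $\Vert \Ad_q(u)\Vert \leq (a + b\, r(q))\Vert u\Vert$, specialize it to $q = p^{-1}$, and use $r(p^{-1}) = r(p)$ to rewrite the right-hand side in terms of $r(p)$; invoking $\Ad_{p^{-1}} = (\Ad_p)^{-1}$ this becomes $\Vert (\Ad_p)^{-1}(u)\Vert \leq (a + b\, r(p))\Vert u\Vert$. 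Substituting $u = \Ad_p(v)$ and dividing by $a + b\, r(p)$ then produces exactly the lower bound $\Vert v\Vert/(a+b\,r(p)) \leq \Vert \Ad_p(v)\Vert$ of (2), with the same constants.

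The implication (2) $\Rightarrow$ (3) is completely symmetric: apply the lower bound of (2) at $p^{-1}$, rewrite $r(p^{-1})$ as $r(p)$, use $\Ad_{p^{-1}} = (\Ad_p)^{-1}$, and substitute $u = \Ad_p(v)$ to recover the upper bound of (3). Combining everything, (1), (2) and (3) are mutually equivalent. There is no serious obstacle here; the single point deserving care --- and the reason the group structure is genuinely essential --- is that the passage between the two bounds rests on the inversion symmetry $r(p) = r(p^{-1})$, which is exactly what allows the constants $a, b$ to survive the substitution $p \mapsto p^{-1}$ intact. Without that symmetry one would only obtain a bound phrased in terms of $r(p^{-1})$, and the clean transfer of constants --- and hence the equivalence with the single-constant formulation (1) --- would break down.
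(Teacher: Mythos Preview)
Your proposal is correct and follows essentially the same route as the paper: reduce to the equivalence (2) $\Leftrightarrow$ (3), apply the given one-sided bound at $p^{-1}$, invoke $\Ad_{p^{-1}} = (\Ad_p)^{-1}$, and use Lemma \ref{l_rg_rg-1} to replace $r(p^{-1})$ by $r(p)$. The paper's proof is just a terser version of yours (it writes $\Vert u\Vert = \Vert \Ad_{p^{-1}}(\Ad_p(u))\Vert$ and applies the hypothesis directly), and it proves (2) $\Rightarrow$ (3) first rather than (3) $\Rightarrow$ (2), but the substance is identical.
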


\begin{proof}
It suffices to show that $(2)$ and $(3)$ are equivalent. Suppose that $(2)$ holds. Then we have
\begin{equation*}
\Vert u \Vert = \Vert \Ad_{p^{-1}}(\Ad_p (u))\Vert \geq \dfrac{\Vert \Ad_p(u) \Vert}{a + b\, r(p^{-1})} 
\end{equation*}
and using the fact that $r(p) = r(p^{-1})$ then we get that $(3)$ holds. Proving the converse is analogous.
\end{proof}


 Next, let us study the meaning of linear growth and, with  more generality, the bounds on $\Ad_p$, $p\in G$.  Where there is no possibility of confusion,  $\Vert \bdot \Vert$ will also denote the supremum norm on operators, that is, 
\begin{equation}
\label{e:lambda+}
\Vert \Ad_p \Vert= \hbox{Max}_{\Vert u \Vert=1}\{ \Vert \Ad_p(u) \Vert \}= \lambda_+(p),
\end{equation}
where $\lambda_+(p)>0$ (resp. $\lambda_-(p)>0$) denotes the maximum (resp. minimum) of 
$$\{\sqrt{\Lambda_i}: \Lambda_i \, \, \hbox{is a eigenvalue of } \Ad_{p}^{\ast} \circ \Ad_{p}\}
$$
($^\ast$ denotes the $g_R$-adjoint).
As  the eigenvalues of $(\Ad_{p}^{\ast} \circ \Ad_{p})^{-1}$ are $\Lambda_i^{-1}$ one also has
\begin{equation}\label{e:lambda-}
\Vert \Ad_{p^{-1}}\Vert= 1/\lambda_-(p), \qquad \hbox{and using \eqref{e:lambda+},} \quad \Vert \Ad_{p}\Vert \; \Vert \Ad_{p^{-1}}\Vert \geq 1.
\end{equation}

These considerations can be summarized as follows.

\begin{proposition}
Let $\rho: G\rightarrow ]0,\infty[$, $p\in G$, 
\begin{enumerate}
\item The following  upper bounds are equivalent: 
\\
(i)  $\Vert \Ad_p (u)\Vert \leq \rho(p) \Vert u \Vert$ for all $u\in \mathfrak{g}$, (ii) $\lambda_+(p)\leq \rho(p)$, and (iii)~$\Vert \Ad_p \Vert \leq \rho(p)$.
\item The following  lower bounds are equivalent: 
\\
(i)  $ \Vert u \Vert/\rho(p)\leq  \Vert \Ad_{p} (u)\Vert$ for all $u\in \mathfrak{g}$, (ii) $ 1/\rho(p)\leq \lambda_-(p)$, and (iii)~$\Vert \Ad_{p^{-1}} \Vert \leq  \rho(p)$.
\end{enumerate}
Moreover, if $\rho(p)=\rho(p^{-1})$, for all $p\in G$, then the upper and lower bounds are equivalent.
\end{proposition}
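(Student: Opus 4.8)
The plan is to reduce every equivalence to two ingredients already available: the spectral description of the operator norm and the identities \eqref{e:lambda+} and \eqref{e:lambda-} recorded above. No new geometric input is required, so the argument is essentially a careful bookkeeping of definitions, and I would present the two items separately before closing with the symmetry clause.

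For the upper bounds in item (1), I would first note that the equivalence (ii)$\Leftrightarrow$(iii) is nothing but the identity $\Vert \Ad_p \Vert = \lambda_+(p)$ of \eqref{e:lambda+}. For (i)$\Leftrightarrow$(iii) I would appeal directly to the definition of the operator norm: condition (i) asserts $\Vert \Ad_p(u)\Vert/\Vert u\Vert \leq \rho(p)$ for every $u\neq 0$, and taking the supremum over unit vectors turns this into $\Vert \Ad_p\Vert \leq \rho(p)$, while conversely the bound $\Vert \Ad_p\Vert \leq \rho(p)$ controls each individual ratio. This disposes of item (1) with no real computation.

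For the lower bounds in item (2), the only step carrying genuine content is (i)$\Leftrightarrow$(ii), and I would handle it through the spectral theorem applied to the positive-definite symmetric operator $\Ad_p^{\ast}\circ\Ad_p$. Writing $\Vert \Ad_p(u)\Vert^2 = g_R(u,\,\Ad_p^{\ast}\circ\Ad_p\, u)$, the Rayleigh quotient yields $\min_{u\neq 0}\Vert \Ad_p(u)\Vert^2/\Vert u\Vert^2 = \min_i \Lambda_i = \lambda_-(p)^2$, the minimum being attained on a corresponding eigenvector. Hence (i), i.e. $\Vert \Ad_p(u)\Vert \geq \Vert u\Vert/\rho(p)$ for all $u$, holds exactly when $\lambda_-(p)\geq 1/\rho(p)$, which is (ii). The equivalence (ii)$\Leftrightarrow$(iii) is then immediate from \eqref{e:lambda-}, since $\Vert \Ad_{p^{-1}}\Vert = 1/\lambda_-(p)$ rewrites $1/\rho(p)\leq \lambda_-(p)$ as $\Vert \Ad_{p^{-1}}\Vert \leq \rho(p)$.

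Finally, for the ``moreover'' clause I would read both families of bounds through their characterizations (1)(iii) and (2)(iii) and exploit the substitution $p\leftrightarrow p^{-1}$. Assuming the upper bound $\Vert \Ad_p\Vert \leq \rho(p)$ for all $p\in G$ and applying it at $p^{-1}$ gives $\Vert \Ad_{p^{-1}}\Vert \leq \rho(p^{-1}) = \rho(p)$, which is precisely the lower bound at $p$; the converse follows by the same swap. The hypothesis $\rho(p)=\rho(p^{-1})$ is exactly what makes this exchange close up, mirroring the role of $r(p)=r(p^{-1})$ in Proposition \ref{p_linearg}. I do not expect any genuine obstacle here: the sole mathematically substantive point is the Rayleigh-quotient identification of $\lambda_-(p)$ with the smallest stretching factor of $\Ad_p$, which is standard linear algebra, everything else being definitional.
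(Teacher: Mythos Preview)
Your proposal is correct and matches the paper's approach: the paper does not give an explicit proof but presents the proposition as a summary of the identities \eqref{e:lambda+} and \eqref{e:lambda-} derived just above, which is precisely how you argue. Your Rayleigh-quotient step for (2)(i)$\Leftrightarrow$(ii) simply makes explicit what the paper leaves implicit in its definition of $\lambda_-(p)$.
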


\subsection{Linear growth of Clairaut metrics and  
proof of completeness}
 
\begin{lemma}\label{l_essential}
Let $G$ be a Lie group with linear growth. Then, the Clairaut metric $h$ associated to any pair of Wick rotated semi-Riemannian metrics $(g, \tilde g)$ satisfies  inequality \eqref{e: phi_lin} of Prop.~\ref{p_03} with respect to the Riemannian metric $g_R=\tilde g$.
\end{lemma}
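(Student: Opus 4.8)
The plan is to reduce inequality \eqref{e: phi_lin} to the operator-norm bound on $\Ad_p$ provided by the linear growth hypothesis, via the explicit Wick-rotated expression for $h$. Since every tangent vector at $p\in G$ has the form $p.u$ for a unique $u\in\g$, and since $\tilde g$ is left-invariant so that $\Vert p.u\Vert_{\tilde g}=\Vert u\Vert_{\tilde g_1}$, it suffices to prove
\[
\Vert p.u\Vert_h \;\geq\; \frac{\Vert u\Vert_{\tilde g_1}}{a+b\,r(p)},\qquad r(p)=d_{\tilde g}(1,p),
\]
for suitable constants $a,b\geq 0$, where $\Vert\bdot\Vert_{\tilde g_1}$ is the norm of the Euclidean scalar product $\tilde g_1$ on $\g$. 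By Remark \ref{r_3 norms} we are free to take $g_R=\tilde g$ and $\Vert\bdot\Vert=\Vert\bdot\Vert_{\tilde g_1}$ as the data realizing linear growth, which makes $r(p)=d_R(p)$ as required by Prop.~\ref{p_03}.

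First I would invoke Proposition \ref{prop: Wick}, which gives
\[
\Vert p.u\Vert_h^2 = h_p(p.u,p.u)=\tilde g_1\big(\Ad_{p^{-1}}^\ast(\psi(u)),\,\Ad_{p^{-1}}^\ast(\psi(u))\big)=\Vert \Ad_{p^{-1}}^\ast(\psi(u))\Vert_{\tilde g_1}^2,
\]
where $^\ast$ denotes the $\tilde g_1$-adjoint. Because $\psi(e_i)=\epsilon_i e_i$ with $\epsilon_i=\pm 1$ on the $\tilde g_1$-orthonormal basis $(e_i)$, the map $\psi$ is a $\tilde g_1$-orthogonal involution, so $\Vert\psi(u)\Vert_{\tilde g_1}=\Vert u\Vert_{\tilde g_1}$. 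Writing $w:=\psi(u)$, the task becomes a uniform lower bound for $\Vert\Ad_{p^{-1}}^\ast(w)\Vert_{\tilde g_1}$ in terms of $\Vert w\Vert_{\tilde g_1}$.

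The key step is the elementary identity $(\Ad_{p^{-1}}^\ast)^{-1}=\big((\Ad_{p^{-1}})^{-1}\big)^\ast=(\Ad_p)^\ast$, together with the fact that taking the $\tilde g_1$-adjoint preserves the operator norm. These give, for every $w\in\g$,
\[
\Vert \Ad_{p^{-1}}^\ast(w)\Vert_{\tilde g_1} \;\geq\; \frac{\Vert w\Vert_{\tilde g_1}}{\Vert (\Ad_{p^{-1}}^\ast)^{-1}\Vert} \;=\; \frac{\Vert w\Vert_{\tilde g_1}}{\Vert (\Ad_p)^\ast\Vert} \;=\; \frac{\Vert w\Vert_{\tilde g_1}}{\Vert \Ad_p\Vert},
\]
that is, the smallest singular value of $\Ad_{p^{-1}}^\ast$ equals $1/\Vert\Ad_p\Vert$ (cf. \eqref{e:lambda+}--\eqref{e:lambda-}). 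Finally, linear growth in the form of Proposition \ref{p_linearg}(3) furnishes constants $a,b$ with $\Vert\Ad_p\Vert=\lambda_+(p)\leq a+b\,r(p)$; combining this with the previous displays and $\Vert w\Vert_{\tilde g_1}=\Vert u\Vert_{\tilde g_1}$ yields exactly the claimed inequality. I expect no genuine obstacle once Proposition \ref{prop: Wick} is in hand: the only point demanding care is to convert the desired \emph{lower} bound for $h$ into a statement about $\Vert\Ad_p\Vert$ (the \emph{largest} singular value of $\Ad_p$) through the adjoint-inverse identity, rather than into a bound on $\Ad_{p^{-1}}$, and to use Remark \ref{r_3 norms} so that the left-invariant metric realizing linear growth may be taken to be the Wick-rotated $\tilde g$ itself.
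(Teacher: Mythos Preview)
Your proof is correct and follows essentially the same route as the paper: starting from the Wick-rotated formula of Proposition~\ref{prop: Wick}, you bound $\Vert\Ad_{p^{-1}}^\ast(\psi(u))\Vert_{\tilde g_1}$ below by $\Vert u\Vert_{\tilde g_1}/\Vert\Ad_p\Vert$ via the adjoint--inverse identity and then apply linear growth. The paper phrases the same bound through the minimum eigenvalue $\lambda_-(p^{-1})^2$ of $\Ad_{p^{-1}}\circ\Ad_{p^{-1}}^\ast$ and the relation $\Vert\Ad_p\Vert=1/\lambda_-(p^{-1})$, which is exactly your singular-value argument written in spectral language; your explicit appeal to Remark~\ref{r_3 norms} to justify $g_R=\tilde g$ is a point the paper leaves implicit.
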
 
\begin{proof}
 Using Prop. \ref{prop: Wick},
\begin{equation}
\label{e_sharp1}
\begin{array}{rl}
h_p(p.u, p.u) = & 
\tilde{g}_1(\Ad_{p^{-1}}^\ast (\psi(u)),\Ad_{p^{-1}}^\ast (\psi(u))) \\ = &  \tilde{g}_1( \psi(u),\Ad_{p^{-1}} \circ \Ad_{p^{-1}}^\ast (\psi(u))) \geq 
\lambda_-(p^{-1})^2 \; 
\tilde{g}_1( \psi(u), \psi(u)) 
\end{array}
\end{equation}
where, in the last equality, we have used the definition of $\lambda_-$ below \eqref{e:lambda+}. Then, using that $\psi$ is an isometry for $\tilde g$ and \eqref{e:lambda-},
\begin{equation}
\label{e_sharp2}
h_p(p.u, p.u) \geq \frac{\tilde{g}_1( u, u)}{\Vert \Ad_{p}\Vert^2}.
\end{equation}
Now, taking into account the linear growth of $G$, we have $\Vert \Ad_{p}\Vert\leq a+ b\, r(p) $, for some constants $a,b$,
and thus:
$$\Vert p. u\Vert_{h}=
\sqrt{h_p(p.u, p.u)} \geq \frac{\sqrt{\tilde{g}_1( u, u)}}{\Vert \Ad_{p}\Vert}\geq 
\frac{\sqrt{\tilde{g}_p(p. u,p. u)}}{a + b\, r(p)}=\frac{\Vert p. u\Vert_{\tilde g}}{a + b\, r(p)},
$$ 
as required.
\end{proof}
\begin{theorem}\label{t_lg}
All the left-invariant semi-Riemannian metrics of a Lie group with linear growth are geodesically complete.
\end{theorem}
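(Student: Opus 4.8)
The plan is to chain together the machinery already assembled in the previous sections. The logical skeleton is: \emph{linear growth} $\Rightarrow$ \emph{completeness of every Clairaut metric} $\Rightarrow$ \emph{completeness of every left-invariant semi-Riemannian metric}. The second implication is exactly Theorem~\ref{c02}, so the real content is the first one, and this has essentially been delivered by Lemma~\ref{l_essential}. Thus the proof is almost purely a matter of invoking earlier results in the right order.

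Concretely, I would first fix an arbitrary left-invariant semi-Riemannian metric $g$ on the Lie group $G$, which we assume to be of linear growth. Using the Wick rotation of Section~\ref{s2.4}, I would choose a $g_1$-orthonormal basis $(e_i)$ of $\g$ and the associated left-invariant Riemannian metric $\tilde g$, together with the self-adjoint map $\psi$, so that $(g,\tilde g)$ is a Wick-rotated pair and $h$ is the (unique, by Proposition~\ref{prop: Wick}) Clairaut metric built from $(e_i)$. I would then set $g_R = \tilde g$ and recall that, being left-invariant Riemannian, $g_R$ is complete, so Proposition~\ref{p_03} is applicable with $M=G$ and $x_0 = 1$.

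The heart of the argument is Lemma~\ref{l_essential}: since $G$ has linear growth, the Clairaut metric $h$ satisfies the lower primary bound \eqref{e: phi_lin} relative to $g_R = \tilde g$, with bounding function $\varphi(r) = a + b\,r$. By the linear case of Proposition~\ref{p_03}, this lower bound forces $h$ to be complete. Finally, Theorem~\ref{c02} transfers this to $g$: the completeness of the Clairaut metric $h$ implies the geodesic completeness of $g$. Since $g$ was an arbitrary left-invariant semi-Riemannian metric, the conclusion holds for all of them.

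I do not expect a genuine obstacle, since all the difficult analysis has been front-loaded into Lemma~\ref{l_essential} (the equivalence between linear growth of $\Ad$ and the required lower bound) and into Proposition~\ref{p_03} (the preservation of completeness under a primary bound). The only point demanding a line of care is the \emph{uniformity of the constants}: Definition~\ref{def:glg} and Remark~\ref{r_3 norms} guarantee that the linear-growth constants $a,b$ are independent of the auxiliary choices, so the bound $\Vert\Ad_p\Vert \le a + b\,r(p)$ used in Lemma~\ref{l_essential} holds with a single pair of constants valid for every $g$; this is what lets one conclude simultaneously for \emph{all} left-invariant metrics rather than metric by metric. Beyond flagging that uniformity, the proof is a direct concatenation, and I would write it in three short sentences.
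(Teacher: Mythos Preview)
Your proposal is correct and follows essentially the same route as the paper: pick a $g_1$-orthonormal basis, form the Wick-rotated pair $(g,\tilde g)$ and the Clairaut metric $h$, invoke Lemma~\ref{l_essential} to get the linear lower bound \eqref{e: phi_lin}, apply Proposition~\ref{p_03} to conclude that $h$ is complete, and finish with Theorem~\ref{c02}. One small remark: your emphasis on a \emph{single} pair of constants valid for all $g$ is unnecessary---the argument is metric by metric, and Remark~\ref{r_3 norms} only guarantees (and you only need) that for each choice of $\tilde g$ there exist \emph{some} constants $a,b$; no uniformity across different $g$'s is required.
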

\begin{proof}
Given the left-invariant metric $g$, we can choose any of its orthonormal bases and construct the corresponding  Wick rotated metric $\tilde g$ and the Clairaut metric $h$. Lem. \ref{l_essential} permits to apply  Prop. \ref{p_03} and ensure the completeness of $h$. Therefore  Thm.  \ref{c02} applies and $g$ is necessarily  complete.
\end{proof}
 \begin{remark}\label{r_BEWARE} The inequalities \eqref{e_sharp1} and \eqref{e_sharp2} are sharp, as they are attained when $\psi(u)\in \mathfrak{g}$ is an eigenvector associated to the eigenvalue $\lambda_-(p^{-1})^2$ of $\Ad_{p^{-1}} \circ (\Ad_{p^{-1}})^\ast$. 
 As a consequence, the following noticeable  properties occur: 
 
 (1)  Fix a basis  $(e_i)$ of $\mathfrak{g}$ and consider all the left-invariant metrics of any signature such that this basis is orthonormal; in  particular,  all of them have the same Wick rotated Riemannian $\tilde g$. Then, {\em  the  same bound \eqref{e_sharp2} of the corresponding Clairaut metrics with respect to $\tilde g$ holds, with independence of the signature}. 
 
 (2)  However, if linear growth is not imposed, the completeness of these Clairaut metrics do depend on the signature (and explicit example will be given in \S \ref{s6}). The underlying reason is that the $\tilde g$ isometry $\psi$ does depend on the signature and, thus, the {\em direction} of the eigenvector corresponding to $\lambda_-(p^{-1})$ also depends on  the signature.  Intuitively, if the velocity of a curve $\gamma$ lies in the $\lambda_-$ direction its length is ``small'' and, thus, if $\gamma$ diverges its length may be finite and yield incompleteness. However, if $\gamma$ remains in a compact subset then completeness can occur. In particular, the behaviour of the Clairaut  metric is not trivial even when $g=\tilde g$, as anticipated in Remark \ref{r: Wick}.

 \end{remark}

\section{ Groups of linear growth in  Theorem \ref{main} } \label{s5}

In this section, we will prove that each  class of Lie groups in the list of  Theorem \ref{main} has linear growth which, in particular,  completes its proof.  Moreover, the non-linear growth of some related groups will be remarked upon.

\subsection{Bounded growth: abelian and compact groups}

We start by showing that abelian and compact groups not only have linear growth but, what is more, they have bounded growth.

\begin{proposition}\label{prop:growth-abelian}
If $G$ is an abelian Lie group, then $G$ has linear growth.
\end{proposition}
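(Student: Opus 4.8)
The plan is to exploit the fact that for an abelian Lie group the adjoint representation collapses completely, so that the linear growth inequality \eqref{eq:slg} holds in the strongest possible form, namely with $a=1$ and $b=0$.

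First I would recall that $\Ad_p$ is, by definition, the differential at the identity of the inner automorphism (conjugation) $c_p \colon q \mapsto pqp^{-1}$. When $G$ is abelian we have $pqp^{-1}=q$ for every $q\in G$, so $c_p$ is literally the identity map of $G$, whence $\Ad_p = (dc_p)_1 = \mathrm{Id}_{\g}$ for every $p\in G$. If one prefers to restrict attention to the identity component, as the setup surrounding Def. \ref{def:glg} allows, the same conclusion holds verbatim, since conjugation is trivial there as well.

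Consequently, for \emph{any} Euclidean scalar product norm $\Vert \bdot \Vert$ on $\g$ and any left-invariant Riemannian metric $g_R$, one has $\Vert \Ad_p(u)\Vert = \Vert u \Vert$ for all $p\in G$ and all $u\in\g$. In particular, both inequalities in \eqref{eq:slg} hold with equality once we choose $a=1$ and $b=0$, which is precisely the linear growth condition. In fact this establishes the stronger \emph{bounded} growth asserted in the section heading, since $\Vert \Ad_p\Vert \equiv 1$ independently of $r(p)$.

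I do not expect any genuine obstacle in this case: it is the degenerate instance of the notion, and the only point worth isolating cleanly is the elementary identification $\Ad_p = \mathrm{Id}_{\g}$, which is immediate from the triviality of conjugation in an abelian group. Everything else is then a one-line verification of \eqref{eq:slg}.
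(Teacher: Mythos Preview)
Your argument is correct and matches the paper's own proof exactly: both observe that $\Ad_p = \Id_{\g}$ for all $p\in G$ (since conjugation is trivial in an abelian group) and then take $a=1$, $b=0$ in \eqref{eq:slg}. The only difference is that you spell out in more detail why $\Ad_p = \Id_{\g}$, which the paper leaves implicit.
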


\begin{proof}
This is immediate from the fact that $\Ad_p = \Id$, for all $p\in G$. It suffices to take $a = 1$ and $b = 0$ in Def. \ref{def:glg}.
\end{proof}

\begin{proposition}\label{prop:growth-compact}
If $G$ is a compact Lie group, then $G$ has linear growth.
\end{proposition}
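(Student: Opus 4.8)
The plan is to prove the stronger statement that a compact $G$ has \emph{bounded} growth, i.e.\ that the linear growth bound \eqref{eq:slg} holds with $b=0$. Fix any Euclidean scalar product with norm $\Vert \bdot \Vert$ on $\g$ and let $\Vert \Ad_p\Vert$ denote the associated operator norm, as in \eqref{e:lambda+}. Since $\Ad: G\rightarrow \GL(\g)$ is continuous (indeed smooth) and the operator norm is a continuous function on $\GL(\g)$, the composition $p\mapsto \Vert\Ad_p\Vert$ is continuous and strictly positive on $G$.

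First I would use the compactness of $G$ so that this function attains a finite maximum; hence there is a constant $C>0$ with $\Vert \Ad_p\Vert\leq C$ for every $p\in G$. By the very definition of the operator norm this means $\Vert \Ad_p(u)\Vert \leq C\Vert u\Vert$ for all $u\in\g$, which is exactly condition (3) of Prop.~\ref{p_linearg} with $a=C$ and $b=0$. The equivalence of conditions (1)--(3) in Prop.~\ref{p_linearg} then yields that $G$ has linear growth, and in fact bounded growth since $b=0$.

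There is essentially no obstacle here: once the operator-norm function is recognised as continuous and strictly positive, compactness does all the work and the already-established equivalences finish the argument. Alternatively, and giving a slightly sharper picture, one could average an arbitrary scalar product on $\g$ against the Haar measure of $G$ to obtain an $\Ad$-invariant Euclidean scalar product; with respect to its norm every $\Ad_p$ is an isometry, so $\Vert \Ad_p(u)\Vert = \Vert u\Vert$ and \eqref{eq:slg} holds verbatim with $a=1$, $b=0$. Finally, Remark~\ref{r_3 norms} guarantees that the resulting linear (bounded) growth does not depend on the auxiliary choices of $g_R$ and $\Vert \bdot \Vert$, so the statement follows.
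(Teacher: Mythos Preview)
Your proof is correct and follows essentially the same route as the paper: both use continuity of $p\mapsto\Vert\Ad_p\Vert$ together with compactness of $G$ to bound the operator norm by a constant, yielding \eqref{eq:slg} with $b=0$. Your extra remarks (invoking Prop.~\ref{p_linearg} for the lower bound and the alternative via an $\Ad$-invariant scalar product) are fine embellishments but not needed.
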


\begin{proof}  As the adjoint map $\Ad$ is $C^\infty$ (see for example \cite[Th. 3.45]{Warner}), its composition with the norm $\Vert \bdot \Vert$ is continuous and, thus, the map $p\mapsto \Vert  \Ad_p \Vert$  attains a maximum (and a minimum), by the  compactness of $G$.
 For the supremum norm, we also have $\Vert \Ad_p (u) \Vert \leq \Vert \Ad_p \Vert \Vert u \Vert$, for every $u\in\g$. Therefore, $G$ has linear growth with $a = \mathrm{max}{\Vert \Ad_p \Vert}$ and $b = 0$.
\end{proof}

\subsection{Subgroups and direct products} We will now show that the linear growth condition is closed under taking subgroups and direct products.

\begin{proposition}
Let $G$ and $H$ be Lie groups such that $H$ is a subgroup of $G$. If $G$ has linear growth then so does $H$.  
\end{proposition}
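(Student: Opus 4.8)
The plan is to transfer the linear growth of $G$ to $H$ by restriction, using the upper-bound characterization of Proposition~\ref{p_linearg}(3), which is the most convenient since then only a single inequality has to be propagated. The one structural fact I would record first is the naturality of the adjoint representation: the inclusion $H\hookrightarrow G$ differentiates at $1$ to a Lie algebra monomorphism, so that $\mathfrak{h}$ may be viewed as a subalgebra of $\g$, and for every $p\in H$ and $u\in\mathfrak{h}$ one has $\Ad^H_p(u)=\Ad^G_p(u)$ under this identification. Indeed, the conjugation $x\mapsto pxp^{-1}$ on $H$ is the restriction of the corresponding conjugation on $G$, whence $\Ad^H_p$ is the restriction of $\Ad^G_p$ to the $\Ad^G_p$-invariant subspace $\mathfrak{h}$.

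Next I would equip $H$ with the left-invariant Riemannian metric $g_R^H$ whose value at $1$ is the restriction of $(g_R)_1$ to $\mathfrak{h}$, where $(g_R,\Vert\bdot\Vert)$ realizes the linear growth of $G$ (following the convention of Remark~\ref{r_3 norms}, so that $\Vert\bdot\Vert$ is the norm of $(g_R)_1$); the norm on $\mathfrak{h}$ is then simply the restriction of $\Vert\bdot\Vert$. The metric $g_R^H$ is genuinely left-invariant on $H$, since left translations on $H$ are restrictions of those on $G$. The key geometric estimate is the distance comparison
$$ r_G(p)\ \le\ r_H(p)\qquad\text{for all }p\in H, $$
where $r_G(p)=d_R^G(1,p)$ and $r_H(p)=d_R^H(1,p)$. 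This holds because every curve in $H$ joining $1$ to $p$ is also a curve in $G$ of the same length (the metric on $H$ being the restriction of $g_R$), so the infimum defining $r_G(p)$ is taken over a strictly larger family of curves than the one defining $r_H(p)$; the inequality is automatic and, crucially, goes in the direction we need.

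Finally, by Remark~\ref{r_3 norms} it suffices to check the growth condition for this particular metric and norm on $H$. Assuming, with no loss of generality, that $a\ge 1$ and $b\ge 0$ (evaluating the $G$-bound at $p=1$ forces $a\ge 1$, and if $b<0$ one may replace it by $0$ since $a+b\,r(p)\le a$ whenever $r(p)\ge 0$), I combine the two facts above with the growth bound of $G$: for $p\in H$ and $u\in\mathfrak{h}\subseteq\g$,
$$ \Vert \Ad^H_p(u)\Vert=\Vert\Ad^G_p(u)\Vert\ \le\ (a+b\,r_G(p))\,\Vert u\Vert\ \le\ (a+b\,r_H(p))\,\Vert u\Vert, $$
the last step using $r_G(p)\le r_H(p)$ together with $b\ge 0$. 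By Proposition~\ref{p_linearg} this upper bound is equivalent to full linear growth, so $H$ has linear growth.

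The point requiring care—more a subtlety than a genuine obstacle—is that \emph{subgroup} may mean an immersed rather than an embedded Lie subgroup (for instance a dense one-parameter subgroup). I would emphasize that neither the naturality of $\Ad$ nor the distance comparison $r_G\le r_H$ uses embeddedness: the former is purely Lie-theoretic, and the latter needs only that $H$-curves are $G$-curves of equal length. In particular it is harmless that $r_H$ may be vastly larger than $r_G$ for a non-closed $H$, precisely because the inequality is exploited in the favorable direction.
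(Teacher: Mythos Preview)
Your proof is correct and follows essentially the same route as the paper: restrict the metric to $H$, use that $\Ad^H_p$ is the restriction of $\Ad^G_p$ to $\mathfrak{h}$ so that $\Vert \Ad^H_p\Vert\le\Vert\Ad^G_p\Vert$, invoke the distance comparison $r_G\le r_H$, and conclude via the upper-bound characterization of linear growth. You are simply more explicit than the paper about the sign of $b$, the naturality of $\Ad$, and the immersed-versus-embedded issue, none of which changes the argument.
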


\begin{proof}
Let $g_R^G$ be a Riemannian metric on $G$ and $g_R^H$ be the restriction of $g_R^G$ to $H$. Fix $p\in H$, clearly $d_{G}(1, p) \leq d_{H}(1, p)$. Moreover, since the Lie algebra $\mathfrak{h}$ of $H$ is a Lie subalgebra of the Lie algebra $\g$ of $G$, then $\Vert \Ad^H_p \Vert \leq \Vert \Ad^G_p \Vert$. Thus, for some constants $a,b$, we have $\Vert \Ad_p^H \Vert \leq \Vert \Ad_p^G \Vert \leq a + b\, d_G(1,p) \leq a + b\, d_H(1,p),$ therefore proving that $H$ has linear growth.  
\end{proof}

\begin{proposition}\label{prop:direct-products}
If $K$ and $N$ are Lie groups with linear growth, then their direct product $K\times N$ also exhibits linear growth. 
\end{proposition}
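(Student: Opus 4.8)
The plan is to build a left-invariant Riemannian metric on $K \times N$ as an orthogonal direct sum of the given metrics on the factors, and then verify the linear growth bound \eqref{eq:slg} by exploiting the block-diagonal structure that the adjoint representation inherits from the direct product. The key structural fact I would use at the outset is that for a direct product, the Lie algebra splits as $\mathfrak{k} \oplus \mathfrak{n}$ with $[\mathfrak{k}, \mathfrak{n}] = 0$, so that for $(p,q) \in K \times N$ the adjoint operator is block-diagonal, $\Ad_{(p,q)} = \Ad_p^K \oplus \Ad_q^N$, acting on $\mathfrak{k} \oplus \mathfrak{n}$. By Remark \ref{r_3 norms}, I am free to choose any convenient left-invariant Riemannian metric, so I would take $g_R^{K\times N} = g_R^K \oplus g_R^N$ with the factors orthogonal; then the norm on $\mathfrak{k}\oplus\mathfrak{n}$ satisfies $\Vert (u,v)\Vert^2 = \Vert u\Vert_K^2 + \Vert v\Vert_N^2$.

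The second ingredient is a comparison of distances. For this product metric, the distance function splits as well: I would argue that $r(p,q) = d_R^{K\times N}(1,(p,q))$ satisfies a clean relation with $r_K(p)$ and $r_N(q)$. In fact the product of the two geodesics gives $r(p,q) \leq r_K(p) + r_N(q)$, and projecting onto each factor (which is distance-nonincreasing, since the projection is a Riemannian submersion onto an orthogonal factor) gives $r_K(p) \leq r(p,q)$ and $r_N(q) \leq r(p,q)$. These two inequalities together are exactly what I need: the factor-distances are each bounded by the product-distance, which lets me transfer the linear bounds on $K$ and $N$ up to the product.

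Now I would assemble the estimate. Using the block-diagonal form and the orthogonal norm, $\Vert \Ad_{(p,q)}(u,v)\Vert^2 = \Vert \Ad_p^K(u)\Vert_K^2 + \Vert \Ad_q^N(v)\Vert_N^2$. Applying the upper bound of \eqref{eq:slg} on each factor, say $\Vert \Ad_p^K\Vert \leq a_K + b_K\, r_K(p)$ and $\Vert \Ad_q^N\Vert \leq a_N + b_N\, r_N(q)$, together with $r_K(p) \leq r(p,q)$ and $r_N(q) \leq r(p,q)$, I would bound each summand by $(a + b\, r(p,q))^2 \Vert u\Vert_K^2$ and $(a + b\, r(p,q))^2 \Vert v\Vert_N^2$ respectively, where $a = \max(a_K,a_N)$ and $b = \max(b_K,b_N)$. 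Summing yields $\Vert \Ad_{(p,q)}(u,v)\Vert \leq (a + b\, r(p,q))\Vert(u,v)\Vert$, which is condition (3) of Proposition \ref{p_linearg}. By that proposition, condition (3) is equivalent to linear growth, so the proof is complete.

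I do not expect a serious obstacle here; the argument is essentially bookkeeping once the block-diagonal structure of $\Ad$ and the distance comparisons are in place. The one point requiring a little care is the distance comparison $r_K(p) \leq r(p,q)$: it must be justified that the coordinate projection $K \times N \to K$ does not increase distances, which follows because it is a Riemannian submersion (indeed a totally geodesic projection onto an orthogonal factor) and hence $1$-Lipschitz. If one prefers to avoid invoking submersion machinery, the same inequality can be read off directly by applying the length functional to the $K$-component of any connecting curve. Invoking Proposition \ref{p_linearg} to pass from the upper bound alone to full linear growth is what keeps the write-up short, since it spares me from separately establishing the lower bound in \eqref{eq:slg}.
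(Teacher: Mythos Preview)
Your proof is correct and follows essentially the same approach as the paper's: both take the product Riemannian metric, exploit the block-diagonal form $\Ad_{(p,q)}=\Ad_p^K\oplus\Ad_q^N$, and use the $1$-Lipschitz projections to compare $r_K(p),\,r_N(q)$ with $r(p,q)$. The only cosmetic differences are that the paper uses the triangle inequality $\Vert(\Ad_k u,\Ad_n v)\Vert\leq\Vert\Ad_k u\Vert+\Vert\Ad_n v\Vert$ (arriving at constants $a_K+a_N,\,b_K+b_N$) whereas you use the Pythagorean identity and obtain the slightly sharper $\max(a_K,a_N),\,\max(b_K,b_N)$.
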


\begin{proof}
Let $\mk$ and $\n$ be the Lie algebras of $K$ and $N$, respectively.  Consider left-invariant Riemannian metrics $g_R^K$ on $K$ and $g_R^N$ on $N$, with induced norms $\Vert \bdot \Vert_{\mk}$ on $\mk$ and $\Vert \bdot \Vert_\n$ on $\n$, and denote $r^K(k) = d_K(1_k, k)$, $r^N(n) = d_N(1_N, n)$, the induced distances to the identity of $K$ and $N$, respectively. From the linear growth condition on $K$ and on $N$, there exist constants $a_K, b_K, a_N, b_N$ such that, for every $k\in K$ and $n\in N$,
\begin{equation*}
\Vert \Ad_k u \Vert_\mk \leq  (a_K + b_K \, r^K(k)) \Vert u \Vert_\mk \quad \text{ and } \quad \Vert \Ad_n v \Vert_\n \leq  (a_N + b_N\, r^N(n)) \Vert v \Vert_\n  
\end{equation*}
for all $u\in \mk $, for all $v\in \mathfrak{n}$. Taking the (left-invariant) product Riemannian metric on $K\times N$  with induced norm $\Vert \bdot \Vert_{\mk\times \n}$ on  $\mk \times \mathfrak{n}$, we have
\begin{equation*}
\Vert \Ad_{(k,n)} (u,v) \Vert_{\mk\times\n} \leq \Vert \Ad_k u \Vert_\mk  + \Vert \Ad_n v \Vert_\n  \leq (a_K + b_K \, r^K(k)) \Vert u \Vert_\mk + (a_N + b_N\, r^N(n)) \Vert v \Vert_n.
\end{equation*}
Considering the orthogonal projection on each factor, then 
$d_N(1_N, n) \leq d_{K\times N}((1_K, 1_N), (k,n))$ and $d_K(1_K, k) \leq d_{K\times N}((1_K, 1_N), (k,n))$, since the metric on $K\times N$ is the product of $g_R^N$ and $g_R^K$. Thus, $r^N(n) \leq r^{K\times N}(k,n)$ and $r^K(k) \leq r^{K\times N}(k,n)$. Therefore,
\begin{eqnarray*}
 \Vert \Ad_{(k,n)} (u,v) \Vert_{\mk\times\n} & \leq & (a_K + b_K \, r^{K\times N}(k,n)) \Vert u \Vert_\mk  + (a_N + b_N \, r^{K\times N}((k,n)) ) \Vert v \Vert_\n   
 \\ & \leq & ((a_K + a_N ) + (b_K + b_N) r^{K\times N}(k,n))\Vert (u  ,  v ) \Vert_{\mk\times \n},
\end{eqnarray*} 
yielding the result. 

\end{proof}

\subsection{2-step nilpotent groups}

\begin{proposition}\label{prop:growth-2-step}
If $G$ is a  2-step nilpotent Lie group, then $G$ has linear growth. 
\end{proposition}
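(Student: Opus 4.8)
The plan is to prove the upper bound in Proposition \ref{p_linearg}(3), which by that proposition is equivalent to linear growth. The algebraic starting point is that a $2$-step nilpotent Lie algebra satisfies $[\g,\g]\subseteq\z(\g)$, so that $\ad_X^2(Y)=[X,[X,Y]]=0$ for all $X,Y\in\g$ (because $[X,Y]\in[\g,\g]$ is central). Hence $\ad_X^2=0$ and, since $\Ad_{\exp X}=e^{\ad_X}=\sum_{n\ge 0}\ad_X^n/n!$, the series truncates to
\[
\Ad_{\exp X}=\Id+\ad_X ,\qquad X\in\g .
\]
Writing $X=X_v+X_z$ with $X_z\in[\g,\g]\subseteq\z(\g)$, one has $\ad_{X_z}=0$, so $\ad_X=\ad_{X_v}$ and therefore $\Vert\Ad_{\exp X}\Vert\le 1+\Vert\ad_{X_v}\Vert\le 1+c\,\Vert X_v\Vert$, where $c=\Vert\ad\Vert$ is the operator norm of the fixed linear map $\g\to\mathrm{End}(\g)$, $X\mapsto\ad_X$, and $X_v$ is the component of $X$ in a fixed complement $\mathfrak v$ of $[\g,\g]$.

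The crux is then to bound $\Vert X_v\Vert$ by the distance $r(\exp X)$. The difficulty, and the reason the estimate is not completely trivial, is that one cannot bound $\Vert X\Vert$ itself this way: along central directions the Riemannian distance grows only sublinearly (like a square root, as already in the Heisenberg group), so $\Vert X\Vert$ is not controlled by any affine function of $r$. The point is precisely that $\Ad$ is insensitive to those directions, so only the transverse part $X_v$ must be estimated, and this part is governed by the abelianisation. First I would reduce to $G$ simply connected: lifting $g_R$ to the universal cover $\tilde G$ makes the projection a local isometry with $r_G(p)=\min_{\tilde p\in\pi^{-1}(p)}r_{\tilde G}(\tilde p)$ while $\Ad_p=\Ad_{\tilde p}$, so linear growth of $\tilde G$ transfers to $G$. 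For simply connected $2$-step nilpotent $G$, the connected subgroup $N=\exp[\g,\g]$ is closed and $A:=G/N$ is a simply connected abelian group, i.e.\ a Euclidean vector group $\R^{k}$, with projection the homomorphism $\pi$ whose differential is the quotient map $\g\to\g/[\g,\g]$.

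Taking $\mathfrak v=[\g,\g]^{\perp}$ (orthogonal for $(g_R)_1$) and the quotient (submersion) metric on $A$, the projection $\pi$ is $1$-Lipschitz and $d\pi|_{\mathfrak v}$ is a linear isometry onto the Lie algebra of $A$. Since $A$ is a flat vector group, $\pi(\exp X)=\exp_A(d\pi(X))$ is just the point $d\pi(X_v)\in\R^{k}$, whose distance to the origin equals $\Vert d\pi(X_v)\Vert=\Vert X_v\Vert$; combining this with the $1$-Lipschitz bound gives
\[
\Vert X_v\Vert=d_A(\bar 1,\pi(\exp X))\le d_G(1,\exp X)=r(\exp X).
\]
Using that $\exp$ is onto for connected nilpotent groups, every $p\in G$ equals $\exp X$ for some $X$, and the previous two estimates combine to $\Vert\Ad_p\Vert\le 1+c\,r(p)$, that is, linear growth with $a=1$ and $b=c$.

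The step I expect to be the main obstacle is the distance comparison $\Vert X_v\Vert\le r(p)$. Its conceptual content is exactly to separate the linearly-controlled transverse directions (which alone enter $\Ad$, by the truncation $\Ad_{\exp X}=\Id+\ad_{X_v}$) from the central directions, whose sublinear distance growth would otherwise destroy any affine bound; the abelian quotient $G/N$ is the device that isolates precisely the former. A secondary point requiring care is the reduction to the simply connected case, where one must check that the lifted metric turns the covering into a Riemannian covering so that $r_G(p)=\min_{\tilde p}r_{\tilde G}(\tilde p)$ and $\Ad$ is unchanged; both are standard but should be stated, since linear growth is a metric property of $G$ itself rather than of $\g$ alone.
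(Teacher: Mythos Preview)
Your proof is correct and follows the same overall strategy as the paper: truncate $\Ad_{\exp X}=\Id+\ad_X$ using $2$-step nilpotency, pass to an abelian quotient equipped with the submersion metric, and use that the projection is $1$-Lipschitz to bound the horizontal component of $\log p$ by $r(p)$.

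There are a few implementation differences worth noting. The paper quotients by the full centre $Z$ rather than by $\exp[\g,\g]$; since $[\g,\g]\subseteq\z(\g)$ in the $2$-step case both choices give an abelian quotient, and either works. The paper also invests extra effort to show, via the Euler--Arnold equation, that horizontal geodesics through $1$ are one-parameter subgroups, hence globally minimising in $G$; this yields the sharper statement $r(pz)=\Vert u\Vert$ for the horizontal part $u$, whereas you obtain only the inequality $\Vert X_v\Vert\le r(p)$ from the $1$-Lipschitz property of $\pi$. Your route is more economical: the inequality is all that is needed, and you bypass the Euler--Arnold argument entirely by exploiting the homomorphism identity $\pi\circ\exp_G=\exp_A\circ d\pi$ and the flatness of $A$. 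Finally, you reduce to the simply connected case at the outset (correctly, via the Riemannian covering identity $r_G(p)=\min_{\tilde p}r_{\tilde G}(\tilde p)$ and $\Ad_p=\Ad_{\tilde p}$), while the paper handles non-simply connected $G$ in situ by observing that $G/Z\cong\tilde G/Z(\tilde G)$ is still $\R^d$. Both treatments are valid; yours is slightly more streamlined, the paper's yields the additional geometric fact that horizontal one-parameter subgroups are minimising geodesics.
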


\begin{proof}

Let $Z$ be the centre of $G$. We have that  $\pi: G \longrightarrow G/Z$ is a fibration and its fibres are naturally isomorphic to $Z$. Consider $\g$ and $\z$ the Lie algebras of $G$ and $Z$, respectively. We have that $\ker d\pi_1 = \z$, and for this reason we will call the elements of $\z$ the vertical vectors.  Take any left-invariant Riemannian metric on $G$, which induces a positive definite inner product on $\g$, and define $\p$ as the orthogonal complement of $\z$ in $\g$, which we will call the horizontal vectors. Remark that $\p$ can be identified with $T_1(G/Z)$.
The left-invariant Riemannian metric has a well-defined projection on $G/Z$, yielding $G/Z$ as a Riemannian homogeneous space (in fact, a Lie group since $Z$ is normal in $G$), and such that $d\pi: (\ker d\pi)^\perp \longrightarrow T(G/Z)$ is an isometry.  In other words, the map $\pi: G \longrightarrow G/Z$ is a Riemannian submersion.

Observe, in fact, that since $Z$ is the centre and $G$ is 2-step nilpotent, $G/Z$ is abelian,  more concretely isomorphic to  some $\R^d$ (endowed with a left-invariant 
Riemannian and so Euclidean metric). Indeed, if $\tilde{G}$ is the universal cover of $G$, then, its fundamental group  
$\pi_1(G)$ is contained in the centre of $\tilde{G}$, \cite[Chap. I.3, p. 47, Cor. 2]{OniVin}. Hence the quotient of $G$ by its centre is 
isomorphic to the quotient of $\tilde{G}$ by its centre. The last quotient is  a simply connected abelian Lie group.

Let us now recall some properties of   Riemannian submersions, (\cite[\S 7, p. 212-213]{ONeill}). First, they  are distance contracting: for any $p, q \in G$, $d_{G/Z}(\pi(p), \pi(q)) \leq d_G(p, q)$. 
Also, a geodesic $t \mapsto \gamma(t)$ in $G$ is horizontal if it is somewhere, and hence everywhere, orthogonal to the fibres. Its projection $t \mapsto \pi(\gamma)(t)$ is
a geodesic and $\pi $ establishes an isometry between $\gamma$ and $\pi\circ \gamma$.  In our case, all geodesics in the Euclidean space 
$G/Z$ are (globally) minimizing, and thus  all horizontal geodesics of $G$ are minimizing. That is, $d_G(\gamma(t), \gamma(s)) = \vert t - s \vert$  $[= 
d_{G/Z}(\pi(\gamma(t)), \pi(\gamma(s)))]$, for any $t, s$
(assuming $\gamma$ is arc-length parameterised).  

If $z \in Z$, and $\gamma(t) z$ is a point in the fibre of $\pi(\gamma (t))$, then,
$d_G(\gamma(t) z, \gamma (s) ) \geq \vert t - s \vert$. Indeed a geodesic joining $\gamma(t) z$ and $\gamma(s)$ 
projects to a curve joining $\pi(\gamma(t))$ and $\pi(\gamma(s))$, and hence (by the contracting character of $\pi$) has length greater than or equal to
$ \vert t - s \vert$.
 
Our aim now is to show that horizontal geodesics through 1 are in fact one-parameter subgroups of $G$. 
Consider such a geodesic $\gamma$,   with  $\gamma(0) = 1$ and $\dot{\gamma}(0) = a$, with $a \in \p$ (the orthogonal of $\z$). 
We will show that  $\gamma(t) = \exp (ta)$. Given any curve $\gamma$ in $G$, we can consider the associated curve $x(t) = \gamma^{-1}(t) \dot{\gamma}(t)\in\g$ and, by the Euler-Arnold theorem, \cite{Arn}, the geodesics are in one-to-one correspondence with the solutions of
\begin{equation}\label{eq:EA-2nil}
\dot{x}(t) = \ad_{x(t)}^\ast x(t)
\end{equation}
where $\ast$ stands for the formal adjoint with respect to the  (positive definite) inner product on $\g$.  The curve $\gamma(t) = \exp(ta)$ is a geodesic if and only if $x(t) = a$ is a solution of the Euler-Arnold equation \eqref{eq:EA-2nil}. Clearly, $\dot{x}(t) = 0$. Moreover, for any $b\in \g$, we get that $\langle \ad_a^\ast a, b \rangle = \langle a, \ad_a b \rangle $. Since $\g$ is 2-step nilpotent then $\ad_a b \in \z$ and since $a\in \p$ then $\langle a , \ad_a b \rangle = 0$. Thus, $x(t)$ satisfies eq. \eqref{eq:EA-2nil}. 


Let $p  \in G\backslash Z$,  we proceed to showing that there exists $z\in Z$ with $pz$ of the form $\exp(ta)$, with $a\in \p$ such that $\Vert a \Vert = 1$. There is a $w\in\g$ such that $p= \exp(w)$. Consider the decomposition $w= u+v$ with $u\in \p$ and $v\in \z$ and take $z=\exp(-v)$.  Since $v\in \z$, then $z\in Z$ and, by the Baker-Campbell-Hausdorff formula, \cite[Chap. II, \S 6.5, p. 161]{Bourbaki-c}, since $[w,v]=0$, we get $pz = \exp(w)\exp(-v) = \exp(u)$, so that our claim follows with $a = u/{\Vert u \Vert}$.

Since $\gamma(t) = \exp(ta)$ is a minimizing unitary geodesic, $r(pz) = d_G(1,pz) = |t|$.  From our discussion above, 
$r(p) = d_G(1, p) \geq d_G(1, pz) \geq \vert t \vert$.

Remark now that $\Ad_p = \Ad_{pz}$. Also, $\Ad_{\exp(ta)} = \exp^{\ad_{ta}} = \Id + t\, \ad_a$, since $G$ is 2-step nilpotent.   Let $\beta = \Vert \Id \Vert $ and $\alpha = \Vert \ad_a\Vert$.  Thus, for any $u\in \g$, $\Vert \Ad_p u \Vert = \Vert \Ad_{pz} u \Vert \leq (\alpha |t|+ \beta ) \Vert u \Vert$ = $(\alpha r(pz) + \beta)\Vert u \Vert   \leq (\alpha r(p)  + \beta) \Vert u \Vert $.  If $z\in Z$, then $\Ad_z = \Id$, and the same inequality holds.

 \end{proof}

\begin{proposition}\label{prop:growth-3-step}
If $G$ is an $m$-step nilpotent Lie group, with $m\geq 3$, then $G$ does not have linear growth.
\end{proposition}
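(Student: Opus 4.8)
The plan is to produce, for a group $G$ of nilpotency class $m\ge 3$, a one-parameter family of points $p_\lambda$ along which the distance $r(p_\lambda)=d_R(1,p_\lambda)$ grows at most linearly in $\lambda$ whereas the operator norm $\Vert\Ad_{p_\lambda}\Vert$ grows at least quadratically. Since, by Proposition~\ref{p_linearg}, linear growth is equivalent to the existence of constants $a,b$ with $\Vert\Ad_p(u)\Vert\le(a+b\,r(p))\Vert u\Vert$ for all $p,u$ --- that is, with $\Vert\Ad_p\Vert\le a+b\,r(p)$ for the operator norm --- such a family immediately contradicts linear growth. The two elementary ingredients are the identity $\Ad_{\exp(tX)}=\e^{t\,\ad_X}$ and the length estimate $r(\exp(tX))\le |t|\,\Vert X\Vert$, valid because $s\mapsto\exp(sX)$ has constant $g_R$-speed $\Vert X\Vert$ by left-invariance.

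Because $G$ is $m$-step nilpotent with $m\ge 3$, the third term of the lower central series is nontrivial, $[\g,[\g,\g]]\neq 0$, so there exist $X,Y,Z\in\g$ with $\ad_X\ad_Y(Z)=[X,[Y,Z]]\neq 0$; in particular $\ad_X\ad_Y\neq 0$. Moreover each $\ad_X$ is nilpotent (indeed $\ad_X^{\,m}=0$, since $\ad_X^{\,j}(\g)$ lies in the $(j{+}1)$-st term of the lower central series). I would then distinguish two cases. In the first, some $X\in\g$ satisfies $\ad_X^2\neq 0$; here I take $p_\lambda=\exp(\lambda X)$, so that $r(p_\lambda)\le \lambda\,\Vert X\Vert$ while $\Ad_{p_\lambda}=\e^{\lambda\,\ad_X}=\sum_{i=0}^{k}\tfrac{1}{i!}\lambda^i\ad_X^{\,i}$, where $k\ge 2$ is the largest exponent with $\ad_X^{\,k}\neq 0$. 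As $\lambda^{-k}\Ad_{p_\lambda}\to\tfrac{1}{k!}\ad_X^{\,k}\neq 0$, we obtain $\Vert\Ad_{p_\lambda}\Vert\ge c\,\lambda^{k}-O(\lambda^{k-1})\ge c'\lambda^2$ for large $\lambda$.

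In the remaining case, $\ad_X^2=0$ for every $X$ (the $2$-Engel situation). Keeping $X,Y$ with $\ad_X\ad_Y\neq 0$, I set $p_\lambda=\exp(\lambda X)\exp(\lambda Y)$. Now $\e^{\lambda\,\ad_X}=\Id+\lambda\,\ad_X$ and likewise for $Y$, whence $\Ad_{p_\lambda}=\Id+\lambda(\ad_X+\ad_Y)+\lambda^2\,\ad_X\ad_Y$ is an operator polynomial of exact degree $2$ with nonzero leading coefficient, so again $\Vert\Ad_{p_\lambda}\Vert\ge c\,\lambda^2-O(\lambda)$; the triangle inequality and left-invariance give $r(p_\lambda)\le \lambda(\Vert X\Vert+\Vert Y\Vert)$. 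In either case $\Vert\Ad_{p_\lambda}\Vert/r(p_\lambda)\to\infty$, so no affine bound $\Vert\Ad_p\Vert\le a+b\,r(p)$ can hold, and by Proposition~\ref{p_linearg} the group $G$ is not of linear growth.

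The main obstacle is guaranteeing that no cancellation destroys the quadratic term of $\Ad_{p_\lambda}$, and this is precisely what dictates the case split. A single one-parameter subgroup only forces superlinear growth when some $\ad_X^2\neq 0$; for a product $\exp(\lambda X)\exp(\lambda Y)$ the degree-two coefficient is $\tfrac12\ad_X^2+\ad_X\ad_Y+\tfrac12\ad_Y^2$, which might vanish despite $\ad_X\ad_Y\neq 0$. Confining the product construction to the $2$-Engel case forces $\ad_X^2=\ad_Y^2=0$, so this coefficient reduces to $\ad_X\ad_Y\neq 0$ and survives. The remaining verifications --- that the leading operator coefficient cannot be cancelled in norm (immediate, since $\Ad_{p_\lambda}$ is polynomial in $\lambda$, so $\lambda^{-\deg}\Ad_{p_\lambda}$ tends to it) and the linear distance bound --- are routine. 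Note that this argument does not require the classification of $2$-Engel algebras: I only use $[\g,[\g,\g]]\neq 0$ to locate $X,Y$ with $\ad_X\ad_Y\neq 0$.
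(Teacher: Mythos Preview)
Your proof is correct and, in its essential Case~1, coincides with the paper's argument: pick $X$ with $\ad_X^{\,k}\neq 0$ for some $k\ge 2$, run along $p_\lambda=\exp(\lambda X)$, and compare the polynomial growth of $\Ad_{p_\lambda}=\sum_{i\le k}\tfrac{\lambda^i}{i!}\ad_X^{\,i}$ against the linear bound $r(p_\lambda)\le\lambda\Vert X\Vert$. The paper simply \emph{asserts} that such an $X$ exists (``there exists $k>2$, there exist $a,u\in\g$ with $\ad_a^{k-1}(u)\neq 0$'') and proceeds directly.

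Your Case~2 is in fact vacuous over $\R$, so the split is unnecessary --- though it does make explicit what the paper leaves implicit. Indeed, if $\ad_X^2=0$ for every $X$, then linearizing gives $[[y,x],z]+[[y,z],x]=0$; combining this with the Jacobi identity yields $3\,[[y,x],z]=0$, hence $[\g,[\g,\g]]=0$ and $\g$ is at most $2$-step, contradicting $m\ge 3$. So for any $m$-step nilpotent $\g$ with $m\ge 3$ one always lands in your Case~1, and the product construction $\exp(\lambda X)\exp(\lambda Y)$ is never needed. You may either drop Case~2, or --- perhaps better --- replace it by this three-line $2$-Engel observation to justify the existence of the required $X$.
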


\begin{proof} Fix a left-invariant Riemannian metric $g_R$ on $G$. There exists $k>2$, there exist $a,u \in \mathfrak{g}$ such that $\ad_a^{k-1}(u) \neq 0$, and $ad_b^k=0$ for any $b\in\mathfrak{g}$. We can assume that $\Vert a \Vert = 1$. Let $p=\exp(ta)$, $t>0$, then $d_R(1, p) \leq t$ (since $\gamma(s) = \exp(sa)$ is a curve joining 1 to $p$ with length $t$) and
 \begin{equation*}
  \Ad_p (u) = \exp(\ad_{ta}) u = u + t\, \ad_a u + \frac{t^2}{2} \ad_a^2 u + \cdots + \frac{t^{k-1}}{(k-1)!} \ad_a^{k-1} u.
 \end{equation*}
For large values of $t$, we obtain 
\begin{equation*}
 \frac{\Vert \Ad_p(u) \Vert}{t^{k-1}}  \geq
C + O(t)
\end{equation*}
where
$C=\Vert \ad_a^{k-1}(u)\Vert /
(k-1)!>0$
and $O(t) \rightarrow 0$
as $t \rightarrow \infty$ .
Thus, again for large values of $t$, we have
\begin{equation*}
\Vert \Ad_p(u)\Vert \geq
C t^{k-1} -1 \geq
C d_R(1,p)^{k-1} -1
\end{equation*}
which implies non-linear growth.
\end{proof}

Combining Props. \ref{prop:growth-abelian}, \ref{prop:growth-2-step} and \ref{prop:growth-3-step}, we obtain the following result which characterizes linear growth for nilpotent groups.

 \begin{corollary}\label{cor:k-step-nilpotent}
 If $G$ is an $m$-step nilpotent Lie group, then $G$ has linear growth if and only if $m\leq 2$.
 \end{corollary}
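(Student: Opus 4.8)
The statement is a biconditional, so the plan is to establish the two implications separately; the substantive estimates have already been carried out in the three cited propositions, and the only remaining work is to sort the cases by nilpotency step and invoke the appropriate result.

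For the forward implication, suppose $m \leq 2$. I would split according to the value of $m$. If $m=1$ then $G$ is abelian, and linear growth is immediate from Proposition \ref{prop:growth-abelian}. If $m=2$ then $G$ is $2$-step nilpotent, and Proposition \ref{prop:growth-2-step} supplies linear growth. (Under the convention that abelian groups are subsumed in the $2$-step case, these two subcases collapse into a single appeal to Proposition \ref{prop:growth-2-step}.) For the converse I would argue by contraposition: assuming $m \geq 3$, Proposition \ref{prop:growth-3-step} asserts precisely that $G$ fails to have linear growth. Putting the two implications together yields the claimed equivalence.

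Since all nontrivial inequalities are internal to the three propositions, there is no genuine obstacle; the only point requiring a moment of care is the bookkeeping of the step convention (counting abelian as step $1$ under the lower central series, so that $m \leq 2$ covers exactly the abelian and $2$-step cases). It is worth noting the uniform mechanism behind both regimes: along a one-parameter subgroup $p=\exp(ta)$ with $a\in\g$ one has $\Ad_{\exp(ta)} = \sum_{j=0}^{m-1} \tfrac{t^{j}}{j!}\,\ad_a^{\,j}$, a polynomial of degree $m-1$ in $t$, while the arguments in the cited proofs show $r(p)$ is comparable to $t$ along such curves. Linear growth of $\Vert \Ad_p\Vert$ in $r(p)$ thus forces degree at most one, i.e. $m-1 \leq 1$, which is exactly the dichotomy $m\leq 2$ versus $m\geq 3$ recorded in the corollary.
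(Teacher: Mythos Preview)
Your proof is correct and matches the paper's approach exactly: the corollary is stated immediately after Propositions~\ref{prop:growth-abelian}, \ref{prop:growth-2-step} and \ref{prop:growth-3-step} and is proved simply by combining them, precisely as you do. Your closing heuristic is a nice gloss, though note that the cited proofs only establish $r(\exp(ta)) \leq t$ in general (full comparability is shown only for horizontal directions in the $2$-step case), which is all that is needed.
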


\begin{remark}
The proof of Prop.~\ref{prop:growth-3-step} shows  that, for an $m$-step nilpotent Lie group $G$, the adjoint representation $\Ad$ has growth bounded by the $(k-1)$ power, for some $k>2$, of the distance to $1\in G$ (up to additive and multiplicative constant). This is an obstruction to the existence of idempotents for $G$, as seen from Prop.~\ref{prop:idemp-exp-growth} below.
\end{remark}

\subsection{Semidirect products} We will be mainly interested in certain types of semidirect products as shall be explained shortly. For the general theory of semidirect products of Lie groups, we refer to \cite[Chap. III.1.4]{Bourbaki-c}.

Let $K$ be any Lie group and $V (\simeq\R^n)$ be a real vector space, and suppose that we have a representation $\rho: K \longrightarrow \GL(V)$. The semidirect product $K\ltimes_\rho V$ is defined by considering the group structure on $K\times V$ given by
\begin{equation*}
(k_1, v_1).(k_2,v_2) = (k_1k_2, v_1 + \rho(k_1)(v_2)). 
\end{equation*}
The Lie algebra of $K\ltimes_\rho V$ is then the semidirect product 
$\mk \ltimes_{\rho_\ast} V$, 
where $\rho_\ast: \mk \longrightarrow \mathfrak{gl}(V)$ is the derived representation of $\rho$, that is, the differential map of $\rho$ at $1_K$. Here $V$ is the abelian Lie algebra of dimension $n$. The adjoint representation of $K\ltimes_\rho V$ is seen to be given by
\begin{equation*}
\Ad_{(k, v)}(u, w) = (\Ad_{k}u, \rho(k)w - \rho_\ast (\Ad_{k}u)v)
\end{equation*}
and the bracket structure on $\mk\ltimes_{\rho_\ast} V$ can be written as
\begin{equation*}
[(u_1, w_1), (u_2, w_2)] = ( [u_1, u_2], \rho_\ast(u_1)w_2 -\rho_\ast(u_2)w_1).
\end{equation*}

For our purposes, it suffices to take faithful representations $\rho$ since $K\ltimes_\rho V$ is isomorphic to $K^\prime \times (K/K^\prime \ltimes_\rho V)$, where $K^\prime = \ker \rho$ (and Prop. \ref{prop:direct-products} applies). Assuming that $\rho$ is faithful then $K$ can be seen as a subgroup of $\mathrm{GL}(V)$ and $K\ltimes_\rho V$ thought of as a subgroup of the group of affine motions $\mathrm{GL}(n) \ltimes \R^n$ with multiplication given by the matrix product 
\begin{equation*}
\left(\begin{matrix}
\rho(k_1) & v_1 \\
0 & 1
\end{matrix}\right)
\left(\begin{matrix}
\rho(k_2) & v_2 \\
0 & 1
\end{matrix}\right)
=
\left(\begin{matrix}
\rho(k_1 k_2) & v_1+\rho(k_1)v_2 \\
0 & 1
\end{matrix}\right).
\end{equation*}

In the sequel, $K$ will be the product of a compact Lie group and a vector space. To shorten terminology, we will call such a group $K$ a \emph{pseudo-compact} Lie group. Recall that a Lie group is pseudo-compact if and only if it can be equipped with a bi-invariant Riemannian metric, \cite{Milnor}. 

The following lemma is a general result on Riemannian metrics of semidirect products $K\ltimes H$ where $K$ is taken to be pseudo-compact but $H$ is not necessarily a vector space. The case where $K$ is compact and $H$ is the Lie algebra of $K$ seen as a vector space appeared in \cite{AF}.

\begin{lemma}\label{lem:isometry}
Let $K$ be a pseudo-compact Lie group and $H$ be another Lie group such that there is a homomorphism $\rho: K \longrightarrow \mathrm{Aut}(H)$ with $\rho(K)$ pre-compact. If $G$ is the semidirect product $K\ltimes_\rho H$, then there is a left-invariant Riemannian metric on $G$ which is also left-invariant for the direct product $K\times H$.
\end{lemma}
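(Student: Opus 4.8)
The plan is to realize the desired metric on the common underlying manifold $K\times H$ and to arrange that it be simultaneously left-invariant for the two competing group structures. Write $L^\times_{(k_0,h_0)}$ and $L^\ltimes_{(k_0,h_0)}$ for the left translations of the direct product $K\times H$ and of $G=K\ltimes_\rho H$, respectively. The first thing I would record is the factorization
\[
L^\ltimes_{(k_0,h_0)}=L^\times_{(1,h_0)}\circ(\mathrm{id}_K\times\rho(k_0))\circ L^\times_{(k_0,1)},
\]
which is checked by evaluating both sides at a point $(k,h)$ and using $(k_0,h_0)\cdot(k,h)=(k_0k,\,h_0\,\rho(k_0)(h))$ together with $\rho(1)=\mathrm{id}_H$; here $\mathrm{id}_K\times\rho(k_0)$ denotes the diffeomorphism $(k,h)\mapsto(k,\rho(k_0)(h))$. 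The consequence I want is this: if a metric $g$ is already left-invariant for the direct product, then the two outer factors are automatically $g$-isometries, so $g$ is left-invariant for $G$ if and only if every $\mathrm{id}_K\times\rho(k_0)$ is a $g$-isometry.

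It therefore suffices to build a direct-product left-invariant metric on which all the maps $\mathrm{id}_K\times\rho(k_0)$ act isometrically. First I would equip $K$ with a bi-invariant Riemannian metric $g_K$, available since $K$ is pseudo-compact \cite{Milnor}; this determines an inner product $\langle\cdot,\cdot\rangle_\mk$ on $\mk$. For the $H$-factor, let $\bar\rho(k)\in\Aut(\mathfrak{h})\subset\GL(\mathfrak{h})$ be the differential at $1_H$ of the automorphism $\rho(k)$. Since differentiation at the identity is a continuous homomorphism $\Aut(H)\to\GL(\mathfrak{h})$, and a continuous map carries relatively compact sets to relatively compact sets, the pre-compactness of $\rho(K)$ passes to $\bar\rho(K)$ in $\GL(\mathfrak{h})$. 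Averaging an arbitrary background inner product over the compact closure $\overline{\bar\rho(K)}$ against its normalized Haar measure then produces an inner product $\langle\cdot,\cdot\rangle_\mathfrak{h}$ on $\mathfrak{h}$ invariant under every $\bar\rho(k)$; let $g_H$ be the left-invariant metric on $H$ it determines.

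With these in hand I would set $g=g_K\oplus g_H$ on $K\times H$. As an orthogonal sum of left-invariant metrics it is left-invariant for the direct product, so by the reduction above it remains only to see that each $\mathrm{id}_K\times\rho(k_0)$ is a $g$-isometry. This map respects the product splitting of the tangent bundle and is the identity on the $K$-block, hence it is a $g$-isometry exactly when $\rho(k_0)$ is an isometry of $(H,g_H)$. Finally, any automorphism of $H$ intertwines left translations, $\rho(k_0)\circ L^H_h=L^H_{\rho(k_0)(h)}\circ\rho(k_0)$, so by left-invariance of $g_H$ it is an isometry if and only if its differential $\bar\rho(k_0)$ preserves $\langle\cdot,\cdot\rangle_\mathfrak{h}$, which holds by construction. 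Running the factorization backwards, every $L^\ltimes_{(k_0,h_0)}$ is then a $g$-isometry.

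The hard part is the middle paragraph: manufacturing an honest $\rho(K)$-invariant inner product on $\mathfrak{h}$ from the pre-compactness hypothesis. Two points deserve care there --- that pre-compactness really descends along $\Aut(H)\to\GL(\mathfrak{h})$ (worth spelling out, since $\Aut(H)$ itself need not be compact), and the Haar-averaging step, which is where compactness of the closure is genuinely consumed. I would also remark that the pseudo-compactness of $K$ enters only to furnish the bi-invariant $g_K$; any left-invariant metric on $K$ would already yield a metric left-invariant for both structures, the bi-invariant choice being the one convenient for the growth estimates carried out afterwards.
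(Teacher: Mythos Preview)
Your proof is correct and takes a genuinely different route from the paper's. The paper starts from an $\Ad(K)$-invariant inner product on $\mathfrak g$, extends it to the left-invariant metric $g_R$ on $G$, and then argues via the isometric action $(k,h,y)\mapsto hyk^{-1}$ of $K\times H$ on $(G,g_R)$: the orbit map through $1_G$ is $(k,h)\mapsto(k^{-1},h)$, and composing with the inversion $k\mapsto k^{-1}$ (an isometry for the bi-invariant metric on $K$) shows that the identity map $K\times H\to G$ pulls $g_R$ back to a left-$(K\times H)$-invariant metric. Your argument instead exhibits the explicit factorization $L^\ltimes_{(k_0,h_0)}=L^\times_{(1,h_0)}\circ(\mathrm{id}_K\times\rho(k_0))\circ L^\times_{(k_0,1)}$ and reduces the whole question to making each $\rho(k_0)$ an isometry of $(H,g_H)$, which you achieve by averaging on $\mathfrak h$.

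Your approach is more elementary and more transparent: it isolates exactly where each hypothesis is consumed and, as you correctly note, shows that the bi-invariance of $g_K$ (hence the pseudo-compactness of $K$) is not actually needed for the lemma itself --- any left-invariant $g_K$ would do. The paper's argument, by contrast, genuinely uses the bi-invariance of $g_K$ twice (to make right-$K$-translations isometries, and to make $k\mapsto k^{-1}$ an isometry), so your observation sharpens the statement. On the other hand, the paper's viewpoint via the $(K\times G)$-action is what one would naturally reuse when analysing the growth of $\Ad$ on $G$ later, so there is some economy in setting it up this way. Both constructions produce the same metric when one chooses the $\Ad(K)$-invariant inner product on $\mathfrak g$ to make $\mathfrak k\perp\mathfrak h$.
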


\begin{proof} Since $K$ is pseudo-compact, there exists a bi-invariant Riemannian metric on $K$, but, moreover, since $\rho(K)$ is pre-compact then there exists an $\Ad(K)$-invariant  positive definite inner product on the Lie algebra $\g$ of $G$. Now take $g_R$ to be the Riemannian metric on $G$ induced by left translations. 

The isometry group of $(G,g_R)$ contains not only $G$ (where each $g\in G$ is identified with its left translation) but $K\times G$ where $K$ is acting on $G$ by multiplication from the right. More precisely, the map
\begin{equation*}
(K \times G) \times G \longrightarrow G \quad \text{ such that }\quad   (k, x, y)  \longmapsto xyk^{-1}
\end{equation*}    
is an action of $K\times G$ on $G$ which preserves the Riemannian metric $g_R$. Consider the restriction of this action to $K\times H$. Concretely, we have
  \begin{equation*}
(K \times H) \times G \longrightarrow G \quad \text{ such that }\quad   (k, h, y=(a,b))  \longmapsto hyk^{-1} =(ak^{-1}, hb). 
\end{equation*}    
Clearly this is an action of $K\times H$ on $G$ which is transitive and free thus rendering $G$ into the principal homogeneous space $(K\times H)/\{1_G\}) \cong K\times H$. Furthermore, the map 
 \begin{equation*}
K \times H  \longrightarrow G \quad \text{ such that }\quad   (k, h)  \longmapsto (k^{-1}, h) 
\end{equation*}
is an isometry for the pullback metric of $g_R$ to $K\times H$. Since 
 \begin{equation*}
K  \longrightarrow K \quad \text{ such that }\quad   k  \longmapsto k^{-1} 
\end{equation*}
is also an isometry for the bi-invariant metric on $K$ then the identity map between $K\times H$ and $G$ is also an isometry and the pullback metric on $K\times H$ is thus left-invariant.
\end{proof}

\begin{proposition}\label{prop:bi-Lipschitz}
Let $K$ and $H$ be two Lie groups such that $K$ and $H$ satisfy the conditions of Lem. \ref{lem:isometry}. Then any pair of left-invariant Riemannian metrics on the semidirect product $K\ltimes_\rho H$ and on the direct product $K\times H$ are bi-Lipschitz equivalent. 
\end{proposition}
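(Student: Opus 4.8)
The plan is to reduce the statement for two arbitrary left-invariant Riemannian metrics to a comparison involving a single canonical metric, and then to exploit Lemma~\ref{lem:isometry} as the bridge between the semidirect and direct products. First I would recall that by Lemma~\ref{lem: left uniform}, any two left-invariant Riemannian metrics on the \emph{same} Lie group are automatically bi-Lipschitz bounded. This observation already disposes of most of the work: if I can produce a \emph{single} left-invariant Riemannian metric that is simultaneously left-invariant for both group structures $K\ltimes_\rho H$ and $K\times H$, then any metric on the first group is bi-Lipschitz to it (by Lemma~\ref{lem: left uniform} applied on $K\ltimes_\rho H$), and any metric on the second group is bi-Lipschitz to it as well (by Lemma~\ref{lem: left uniform} applied on $K\times H$), so transitivity of the bi-Lipschitz relation yields the claim.

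The key step, therefore, is to invoke Lemma~\ref{lem:isometry}, which under exactly the hypotheses assumed here (with $K$ pseudo-compact and $\rho(K)$ pre-compact) produces precisely such a metric $g_R$: a left-invariant Riemannian metric on $G=K\ltimes_\rho H$ that is \emph{also} left-invariant for the direct product $K\times H$. I would state this $g_R$ as the common reference metric. Concretely, given an arbitrary left-invariant Riemannian metric $g'$ on $K\ltimes_\rho H$ and an arbitrary left-invariant Riemannian metric $g''$ on $K\times H$, I would write down the two bi-Lipschitz chains
\begin{equation*}
c_1\, g' \leq g_R \leq g'/c_1, \qquad c_2\, g_R \leq g'' \leq g_R/c_2,
\end{equation*}
each obtained from Lemma~\ref{lem: left uniform} on the respective group, and then compose them to get a constant $c$ with $c\, g' \leq g'' \leq g'/c$.

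The only subtlety worth flagging is that the two Lie groups $K\ltimes_\rho H$ and $K\times H$ share the same underlying manifold $K\times H$, so the pointwise comparison of metrics and hence the notion of ``bi-Lipschitz bounded'' makes sense without any further identification; the content of Lemma~\ref{lem:isometry} is exactly that $g_R$ lies in the left-invariant class of \emph{both} structures on this common manifold. I do not anticipate a genuine obstacle here: once Lemma~\ref{lem:isometry} is granted, the proposition is a two-line composition of bi-Lipschitz estimates. The main thing to be careful about is to apply Lemma~\ref{lem: left uniform} on the correct group each time, since the left-invariance hypotheses are with respect to different multiplications. I would close by remarking that, as a consequence, the two products induce the same uniformity, which is what the surrounding development (linear growth transferring between the two) will use.
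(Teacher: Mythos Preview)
Your proposal is correct and is exactly the paper's approach: the paper's proof consists of the single sentence ``This is an immediate consequence of Lems.~\ref{lem:isometry} and \ref{lem: left uniform},'' and you have simply unpacked that sentence in detail.
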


\begin{proof}
This is an immediate consequence of Lems. \ref{lem:isometry} and \ref{lem: left uniform}.
\end{proof}

\begin{proposition}
Let $G$ be the semidirect product $K \ltimes_\rho V$, where $K$ is a pseudo-compact Lie group and $V$ is a vector space, and let $\rho: K \longrightarrow \mathrm{GL}(V)$ be a representation with $\rho(K)$ pre-compact inside $\mathrm{GL}(V)$.  Then $G$ has linear growth.
\end{proposition}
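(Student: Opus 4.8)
The plan is to verify the upper bound (3) of Proposition \ref{p_linearg}, namely to produce constants $a,b$ with $\Vert \Ad_{(k,v)}\Vert \le a + b\, r((k,v))$ for all $(k,v)\in G$; by that proposition this is equivalent to linear growth. By Remark \ref{r_3 norms} I am free to fix any convenient left-invariant Riemannian metric and scalar product norm, so I would take the metric $g_R$ furnished by Lemma \ref{lem:isometry}, which is simultaneously left-invariant for the semidirect product $G = K\ltimes_\rho V$ and for the direct product $K\times V$, together with the associated $\Ad(K)$-invariant inner product on $\g = \mk\oplus V$ built in its proof. Since $\Ad_{(k,0)}(u,w) = (\Ad_k u,\ \rho(k)w)$, this inner product makes each $\Ad_{(k,0)}$ an isometry; in particular the splitting $\mk\oplus V$ is orthogonal, and $\Vert \Ad_k u\Vert = \Vert u\Vert$, $\Vert \rho(k)w\Vert = \Vert w\Vert$ for all $k\in K$.

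First I would bound the operator norm of the full adjoint. Using the displayed formula $\Ad_{(k,v)}(u,w) = (\Ad_{k}u,\ \rho(k)w - \rho_\ast(\Ad_{k}u)v)$ and the triangle inequality,
\[
\Vert \Ad_{(k,v)}(u,w)\Vert \le \Vert \Ad_k u\Vert + \Vert \rho(k)w\Vert + \Vert \rho_\ast(\Ad_k u)v\Vert .
\]
The first two terms equal $\Vert u\Vert$ and $\Vert w\Vert$. For the third, $\rho_\ast\colon \mk\to\mathfrak{gl}(V)$ is a fixed linear map, so $\Vert \rho_\ast(\Ad_k u)v\Vert \le \Vert\rho_\ast\Vert_{\mathrm{op}}\,\Vert \Ad_k u\Vert\,\Vert v\Vert = C\,\Vert u\Vert\,\Vert v\Vert$ with $C = \Vert\rho_\ast\Vert_{\mathrm{op}}$. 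Hence $\Vert \Ad_{(k,v)}(u,w)\Vert \le (1 + C\Vert v\Vert)(\Vert u\Vert + \Vert w\Vert)$, giving $\Vert \Ad_{(k,v)}\Vert \le a_0 + b_0\Vert v\Vert$ for suitable $a_0,b_0$. The crucial point is that the compactness hypotheses (pseudo-compactness of $K$ and pre-compactness of $\rho(K)$) force every contribution except the cross term $\rho_\ast(\Ad_k u)v$ to be bounded, and that cross term is only linear in $\Vert v\Vert$.

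It remains to control $\Vert v\Vert$ by $r((k,v)) = d_{g_R}((1,0),(k,v))$. Here I would exploit that $g_R$ is left-invariant for the direct product $K\times V$: then $K\times\{0\}$ is a normal subgroup, the projection $\pi\colon K\times V\to V$ is a group homomorphism, and endowing $V$ with the quotient metric turns $\pi$ into a Riemannian submersion, hence distance non-increasing. Since the quotient of a left-invariant metric on the abelian group $V\cong\R^n$ is translation-invariant, i.e. Euclidean, taking $x=(1,0)$ and $y=(k,v)$ yields $\Vert v\Vert_E = d_V(0,v) \le d_{g_R}((1,0),(k,v)) = r((k,v))$ for a Euclidean norm $\Vert\cdot\Vert_E$; equivalence of norms then gives $\Vert v\Vert \le c\, r((k,v))$. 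Combining this with the previous paragraph produces $\Vert \Ad_{(k,v)}\Vert \le a_0 + b_0 c\, r((k,v))$, which is exactly bound (3), so $G$ has linear growth.

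I expect the main obstacle to be this last step, because the naive projection $(k,v)\mapsto v$ is \emph{not} a homomorphism for the semidirect structure (the $V$-component of a product is $v_1+\rho(k_1)v_2$), so one cannot directly realize $V$ as a metric quotient of $G = K\ltimes_\rho V$. The device that rescues the argument is Lemma \ref{lem:isometry}: passing to a metric that is left-invariant for the direct product makes $\pi$ a homomorphism while leaving the distance function $r$ unchanged, since $r$ depends only on the Riemannian tensor and the underlying manifold, not on the group law used to describe it.
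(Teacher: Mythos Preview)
Your proof is correct and follows essentially the same approach as the paper: bound $\Vert\Ad_{(k,v)}\Vert$ linearly in $\Vert v\Vert$ using the adjoint formula and the pre-compactness hypotheses, then control $\Vert v\Vert$ by $r((k,v))$ via the direct-product projection $K\times V\to V$ made available by Lemma~\ref{lem:isometry}. The only differences are cosmetic: the paper factors $\Ad_{(k,v)}=\Ad_{(1,v)}\Ad_{(k,0)}$ and bounds $\Vert\Ad_{(k,0)}\Vert$ by a constant using pre-compactness (rather than choosing an $\Ad(K)$-invariant inner product to make it exactly $1$), and it passes through Prop.~\ref{prop:bi-Lipschitz} to relate $r^{K\times V}$ and $r^G$, whereas you use Lemma~\ref{lem:isometry} directly to identify the two distance functions.
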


\begin{proof}
We start by noting that any $(k,v) \in G$ can be written as $(1,v)(k,0)$ and therefore $\Ad_{(k,v)} = \Ad_{(1,v)}\Ad_{(k,0)}$. Thus, for $(u,w)\in \mk\ltimes_{\rho_\ast} V $, 
\begin{equation*}
\Vert \Ad_{(k,v)}(u,w) \Vert \leq \Vert \Ad_{(k,v)} \Vert \Vert (u,w) \Vert \leq \Vert \Ad_{(1,v)} \Vert \Vert \Ad_{(k,0)} \Vert \Vert (u,w) \Vert. 
\end{equation*}
Since $\mathrm{Ad}(K)$ and $\rho(K)$ are pre-compact then $k\longmapsto \Vert \Ad_{(k,0)} = (\mathrm{Ad}_k, \rho(k))\Vert$ is bounded and, therefore, it is sufficient to consider the action of $V$ via the adjoint representation. 
We have that 
\begin{equation*}
\Ad_{(1,v)}(u,w) = (u, w) + (0, -\rho_\ast(u)(v))
\end{equation*}
which can be rewritten as
\begin{equation*}
\Ad_{(1,v)}(u,w) = (\Id + \ad_{(0,v)})(u, w)
\end{equation*}
and thus
\begin{equation*}
\Vert \Ad_{(1,v)}(u,w) \Vert \leq (\Vert  \Id \Vert + \Vert \mathrm{ad}_{(0,v)} \Vert )\Vert (u,w) \Vert.   
\end{equation*}
From the bilinearity of $\rho_\ast$ we get that  
$$\Vert \mathrm{ad}_{(0,v)} (u,w)\Vert = \Vert -\rho_\ast(u)v\Vert \leq \Vert \rho_\ast \Vert \Vert u \Vert \Vert v \Vert.$$  Then taking the operator norm $\Vert \ad_{(0,v)} \Vert \leq b \Vert v \Vert$, with $b \leq \Vert \rho_\ast \Vert$. For the standard metric on $\R^n$ we have $\Vert v \Vert = r^V(v)$. Thus 
\begin{equation*}
\Vert \Ad_{(1,v)}(u,w) \Vert \leq (a + b\, r^V(v))\Vert (u,w) \Vert   
\end{equation*}
with $a = \Vert \Id \Vert$. Taking $r^{K\times V}$  the distance with respect to a (left-invariant) product metric, we obtain
\begin{equation*}
\Vert \Ad_{(1,v)}(u,w) \Vert \leq (a + b\, r^{K\times V}(k,v))\Vert (u,w) \Vert.   
\end{equation*}
 We can now use Prop. \ref{prop:bi-Lipschitz} and this yields the result.
\end{proof}

Combining the results of this section, we have now completed the list in  Th. \ref{main} and therefore its proof.  We finish this section with a class of groups which not only do not have linear growth but have, in fact, exponential growth. 

\subsection{Existence of idempotents}

Recall, as observed in \eqref{eq:EA-2nil}, that given a Lie group $G$ equipped with a left-invariant semi-Riemannian metric $g$, the geodesics of $(G,g)$ are in one-to-one correspondence with the solutions of the Euler-Arnold equation
\begin{equation}\label{eq:EA}
\dot{x} = \ad_x^\dagger x
\end{equation}
on the Lie algebra $\g$, \cite{Arn}, where now $\dagger$ denotes the adjoint with respect to the pseudo-scalar product $g_1$.  It is well-known that the existence of an idempotent, that is, an $x_0 \neq 0$ such that $\ad^\dagger_{x_0}x_0 = x_0$, gives an incomplete solution of \eqref{eq:EA},  cf. \cite{BM}. Observe also that such a geodesic (for instance, $\gamma(t) = \exp(-\ln(1-t) x_0)$) gives a divergent curve of finite length for the Clairaut metric  associated to $g$.  Using Th. \ref{t_lg}, such a group $G$ cannot, therefore, have linear growth. In fact, this can be proven directly, and what is more, the existence of an idempotent implies exponential growth, as we shall see in the following.

\begin{proposition}\label{prop:idemp-exp-growth}
Let $G$ be a Lie group that can be equipped with a left-invariant semi-Riemannian metric $g$ for which the Euler-Arnold equation has an idempotent.  Then, there exists a curve such that $G$ has exponential growth along that curve. 
\end{proposition}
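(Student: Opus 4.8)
The plan is to produce an explicit curve along which the operator norm $\Vert \Ad_p \Vert$ grows exponentially in the distance $r(p)$ to the identity. The natural candidate is the one-parameter subgroup generated by the idempotent itself, or a closely related curve, since the idempotent encodes a geodesic that diverges in finite Clairaut-length; the incompleteness already hints that the adjoint action must blow up faster than any polynomial rate. So first I would fix a left-invariant Riemannian metric $g_R$ on $G$ with induced norm $\Vert \bdot \Vert$, recall from the preceding discussion that an idempotent is an $x_0 \neq 0$ with $\ad_{x_0}^\dagger x_0 = x_0$, and consider the associated divergent geodesic, for instance $\gamma(t) = \exp(-\ln(1-t)\, x_0)$ mentioned just above the statement. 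Reparametrizing by $s = -\ln(1-t)$, this is simply the one-parameter subgroup $p(s) = \exp(s\, x_0)$ for $s \in [0,\infty)$.

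The key step is to compare two growth rates along this curve: the $g_R$-distance $r(p(s)) = d_R(1, p(s))$, which grows at most linearly since $s \mapsto \exp(sx_0)$ has $g_R$-speed bounded by $\Vert x_0\Vert$, so $r(p(s)) \leq \Vert x_0\Vert \, s$; and the adjoint norm $\Vert \Ad_{p(s)}\Vert = \Vert \exp(s\, \ad_{x_0})\Vert$, which I would show grows exponentially in $s$. The mechanism is that the relation $\ad_{x_0}^\dagger x_0 = x_0$ forces $\ad_{x_0}$ (or its adjoint) to have a genuine expanding direction: applying $\exp(s\, \ad_{x_0}^\dagger)$ to $x_0$ gives $\e^{s} x_0$, so $\Vert \exp(s\,\ad_{x_0}^\dagger) x_0\Vert = \e^s \Vert x_0\Vert$, and since $\ad_{x_0}^\dagger$ is the $g_R$-adjoint of $\ad_{x_0}$, the operator norms of $\exp(s\,\ad_{x_0})$ and $\exp(s\,\ad_{x_0}^\dagger)$ coincide. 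Hence $\Vert \Ad_{p(s)}\Vert = \Vert \exp(s\,\ad_{x_0})\Vert \geq \e^s$. Combining with $r(p(s)) \leq \Vert x_0\Vert\, s$ gives
\begin{equation*}
\Vert \Ad_{p(s)}\Vert \geq \e^s \geq \e^{\, r(p(s))/\Vert x_0\Vert},
\end{equation*}
which is exactly exponential growth in the distance along the curve $p(s)$.

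The main obstacle is the subtlety that the eigenvalue relation $\ad_{x_0}^\dagger x_0 = x_0$ is with respect to the indefinite metric $g_1$, whereas the growth estimate and the definition of linear growth are phrased with respect to a positive definite $g_R$. I would need to handle the passage from the $g_1$-adjoint $\dagger$ to the $g_R$-adjoint $\ast$ cleanly: the simplest route is to observe that the eigenvalue argument only needs \emph{some} vector with $\Vert \exp(s\,\ad_{x_0}) \cdot \Vert$ (equivalently $\Vert \exp(s\,\ad_{x_0}^\ast)\cdot\Vert$) growing like $\e^s$, and the idempotent supplies a $g_1$-eigenvector of $\ad_{x_0}^\dagger$ with eigenvalue $1$; since $\ad_{x_0}^\dagger$ and $\ad_{x_0}^\ast$ differ by conjugation with the fixed invertible operator relating $g_1$ and $g_R$ (the $\psi$ of the Wick rotation in Section~\ref{s2.4}), the spectral radius of $\ad_{x_0}$ is at least $1$, and more carefully one sees $\ad_{x_0}$ has an eigenvalue of real part at least $1$, giving the exponential lower bound for $\Vert \exp(s\,\ad_{x_0})\Vert$. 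A secondary point to verify is the upper bound $r(p(s)) \leq \Vert x_0\Vert\, s$, which is immediate from the definition of $r$ as an infimum of lengths, using the curve $\sigma \mapsto \exp(\sigma x_0)$ restricted to $[0,s]$ as a competitor. Putting these together yields the claimed exponential growth along the curve $p(s)$.
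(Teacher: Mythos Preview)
Your overall approach matches the paper's: both take the one-parameter subgroup $p(s)=\exp(s\,x_0)$, bound $r(p(s))\leq \Vert x_0\Vert\,s$, and show $\Vert \Ad_{p(s)}\Vert\geq e^s$ by producing an eigenvector of $\ad_{x_0}$ with eigenvalue~$1$. The only difference is in how that eigenvector is obtained. Your first pass asserts that $\dagger$ is the $g_R$-adjoint, which is false (as you then acknowledge); your repair via conjugacy --- $\ad_{x_0}^\dagger$ is similar to $\ad_{x_0}^\ast$, which in turn shares its spectrum with $\ad_{x_0}$ --- is correct and does deliver the real eigenvalue~$1$ for $\ad_{x_0}$. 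The paper bypasses this detour with a direct rank argument: from $g_1(x_0,\,y-\ad_{x_0}y)=g_1(x_0,y)-g_1(\ad_{x_0}^\dagger x_0,y)=0$ for all $y$, the image of $I-\ad_{x_0}$ lies in the $g_1$-hyperplane $x_0^\perp$, so $I-\ad_{x_0}$ has nontrivial kernel, giving $y_0$ with $\ad_{x_0}y_0=y_0$ and hence $\Ad_{p(s)}y_0=e^s y_0$. This avoids the $\dagger$-versus-$\ast$ bookkeeping entirely and is a bit cleaner, but your spectral route is equally valid once the conjugacy is made explicit.
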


\begin{proof}
We start by proving that if $x_0\in \g$ is an idempotent then $\ad_{x_0}$ has an eigenvector $y_0$ with eigenvalue 1. From the definition of idempotent we have that, for any $y\in \g$, $g_1(x_0, y -\ad_{x_0} y) = 0$. Thus the map $y \longmapsto y-\ad_{x_0} y $ has non-trivial kernel and there exists a $y_0$ such that $\ad_{x_0}y_0 = y_0$.  
Take any left-invariant Riemannian metric $g_R$ on $G$ and consider  the curve $[0,\infty) \ni t \mapsto p(t)= \exp\left( t x_0 \right)$.  Assuming, without loss of generality, that $\Vert x_0 \Vert =1$, hence, as seen before,  $d_R(1, p(t) ) \leq t$.   Moreover, 
\begin{equation*}
\Ad_{ p(t) }(y_0) = \exp(\ad_{t x_0})y_0 = \left(\Id +t \ad_{x_0} + \frac{t^2}{2}\ad_{x_0}^2 + \frac{t^3}{3!} \ad_{x_0}^3+\cdots \right) y_0  = \e^t y_0.
\end{equation*} 
 We then have exponential growth along $p(t)$, as  $\Vert \Ad_{p(t)}(y_0) \Vert  \geq \e^{d_R(1,p(t))}\Vert y_0 \Vert.$
\end{proof}

\section{Example: the affine group of the real line}\label{s6}

 The affine group does not satisfy the linear growth condition and admits both complete and incomplete left-invariant metrics. We will study this group  with two aims. 
First, it provides an interesting illustration of the properties of the Clairaut metrics, their possible growth and completeness (these possibilities were stressed in Remark \ref{r_BEWARE}).

Second, but no less important,  the technique permits to determine exactly all its complete and incomplete left-invariant metrics.
More precisely, our aim will be to divide these metrics into three classes, depending on the behaviour of their Clairaut metrics. One of them is the class of the positive definite (and negative definite) ones; such metrics are complete and so are their Clairaut metrics.   
The other two classes include all the indefinite (necessarily Lorentzian) metrics and they have incomplete left-invariant Clairaut metrics.  The existence of one incomplete representative in each class implies the incompleteness of all these metrics.

\subsection{Framework}
Consider $\mathrm{Aff}(\R)$ the group of affine transformations of the real line: $f(x) = a x + b$, with $a\neq 0$. Take the connected component of the identity, which we will denote by $\mathrm{Aff}^+(\R)$. It can be seen as semidirect product $\R^+ \ltimes \R$ 
and as a matrix group can be concretely written as 
\begin{equation*}
\mathrm{Aff}^+(\R) = \left\{ \left(  \begin{array}{cc}
x & y \\ 0 & 1 
\end{array} \right) \, : \, x>0, y\in \R \right\}
\end{equation*}
with global coordinates $(x,y)$. 
Its Lie algebra, denoted by $\mathfrak{aff}(\R)$ is thus
\begin{equation*}
\mathfrak{aff}(\R) = \left\{ \left(  \begin{array}{cc}
u & v \\ 0 & 0 
\end{array} \right) \, : \, u,v\in \R \right\}
\end{equation*}
with the Lie bracket given by the usual commutator. 
Choosing the matrices 
\begin{equation*}
e_1 = \left( \begin{array}{cc}
1 & 0 \\ 0 & 0
\end{array} \right) \quad \text{and} \quad e_2 = \left( \begin{array}{cc}
0 & 1 \\ 0 & 0
\end{array} \right) 
\end{equation*}
as generators of $\mathfrak{aff}(\R)$, they satisfy the relation $[e_1, e_2] = e_2$. An easy computation shows that $X_1 = x \partial_x$ and $X_2 = x \partial_y$ are the left-invariant vectors fields induced by $e_1$ and $e_2$, respectively.

Let $g$ be any left-invariant metric on $\mathrm{Aff}^+(\R)$. Using the fact that $g(X_i, X_j)$ are constant functions, we can compute that $g$ is given by
\begin{equation}\label{e_gen_aff_inv_g}
g= \frac{1}{x^2} (c_1 \, dx^2 + c_2 \, ( dx dy +dy dx) +  c_3 \, dy^2 )
\end{equation}
where $c_1, c_2, c_3$ are constants verifying $c_1 c_3 - c_2^2 \neq 0$ (imposing the non-degeneracy of the metric).

\subsection{Two left-invariant metrics  $g^{(\epsilon)}$ and their completeness. 
}  
Let us consider the following choices for the parameters in \eqref{e_gen_aff_inv_g},
\begin{equation*}
c_1=1, c_2=0, c_3=\epsilon ,
\end{equation*}
with  $\epsilon=\pm 1$. The corresponding left-invariant metrics will then be:
$$
g^{(\epsilon)}=\frac{1}{x^2}(dx^2+\epsilon dy^2).
$$
Notice that $g^{(1)}$ is complete since it is a  left-invariant Riemannian metric (what is more,   $g^{(1)}$ is a hyperbolic metric). However, the Lorentzian metric  $g^{(-1)}$ is incomplete; indeed,  a simple computation shows that the curve $\gamma(t) = \left( \frac{1}{1-t} , \frac{1}{1-t} \right)$ is an incomplete geodesic for $g^{(-1)}$. This curve comes from an idempotent of the Euler-Arnold vector field, but can also be checked directly by computing the Christoffel symbols and substituting in the geodesics equations.

\subsection{Clairaut 
$h^{(\epsilon)}$: equal non-linear growth}
We now compute the Clairaut metrics $h^{(\epsilon)}$ for $g^{(\epsilon)}.$ The right-invariant vector fields induced by $e_1$ and $e_2$ are given, respectively, by $Y_1 = x \partial_x + y \partial_y$  and $Y_2 =  \partial_y$. Thus the Clairaut first integrals are given by 
\begin{equation*}
\omega^1 = \frac{1}{x^2}(x dx + \epsilon dy)  \quad \text{and} \quad \omega^2 = \frac{\epsilon}{x^2} 
\end{equation*}
and the Clairaut metric $h^{(\epsilon)} = (\omega^1)^2 + (\omega^2)^2$ is thus

\begin{equation}\label{ehe}
h^{(\epsilon)}= \frac{1}{x^4}\left(x^2 dx^2+(1+y^2)dy^2+ \epsilon xy (dx dy+dy dx)\right). 
\end{equation}
Notice that the matrix of $h^{(\epsilon)}$ in these coordinates $(x,y)$ is:
\begin{equation}\label{e_matrix}
\frac{1}{x^4}
\begin{pmatrix}
x^2 & \epsilon xy \\
\epsilon xy & 1+y^2
\end{pmatrix}.
\end{equation}

Computing the determinant and eigenvalues evl$_\pm$  of this matrix:
\begin{equation}
\label{evl}
\hbox{det}=\frac{\epsilon^2}{x^6}, \qquad \hbox{evl}_\pm=
\frac{1}{2x^4}\left(x^2+\epsilon^2(1+y^2)\pm \sqrt{(x^2+\epsilon^2(1+y^2))^2-4\epsilon^2 x^2} \right)
\end{equation}
(clearly, $0<$ evl$_-<$ evl$_+$). 

The eigenvalues measure the growth of $h^{(\epsilon)}$ with respect to  the usual metric $dx^2+dy^2$. 
Notice that the left-invariant metric $g^{(1)}$ is conformal to the usual one and, thus,  up to a conformal factor, these eigenvalues also measure the growth of $h^{(\epsilon)}$ with respect to $g^{(1)}$  (see  details in \S \ref{s6.5}). 

It is interesting to observe that the eigenvalues are independent of sign$(\epsilon)$. Thus, the growth of $h^{(\epsilon)}$ with respect to $g^{(1)}$ is independent of sign$(\epsilon)$.  As $g^{(-1)}$ is incomplete, this growth cannot be linear  for $g^{(-1)}$ and, thus, neither   for $g^{(1)}$. This will be proved directly in \S\ref{s6.5} (indeed,  exponential growth is proven, which is compatible with the existence of idempotents, cf. Prop. \ref{prop:idemp-exp-growth})  and  exemplifies a general property established in Remark \ref{r_BEWARE}(1). 

\subsection{Heuristics on  eigendirections} \label{s6.4} As $g^{(-1)}$ is incomplete, we know  that so is $h^{(-1)}$. We have also seen that 
the growth of the Riemannian metrics $h^{(\epsilon)}$ is equal for $\epsilon=\pm 1$. However, this does not imply that $h^{(1)}$ must be complete. As pointed out in Remark \ref{r_BEWARE}(2) the direction of the eigenvectors for evl$_-$ is crucial at this point. Indeed, in our example the eigenvectors do depend on sign$(\epsilon)$  and $h^{(1)}$ will be complete.

Even though working directly with the eigenvectors is complicated, here, we can understand the qualitative behaviour by rewriting \eqref{e_matrix} as
$$
\frac{1}{x^2}
\begin{pmatrix}
1 & \epsilon \frac{y}{x} \\
\epsilon \frac{y}{x} & \epsilon^2
\frac{1+y^2}{x^2}
\end{pmatrix}
= \frac{1}{x^2}
\begin{pmatrix}
0 & 0 \\
0 & \frac{\epsilon^2}{x^2} 
\end{pmatrix}
+
\frac{1}{x^2}
\begin{pmatrix}
1 \\ \epsilon \frac{y}{x} 
\end{pmatrix}
\begin{pmatrix}
1 & \epsilon \frac{y}{x} 
\end{pmatrix}.
$$
The last term is a singular matrix with pointwise kernel spanned  by the vector field:
$$
V:=(-\epsilon y, x)\equiv -\epsilon y\partial_x+ x\partial_y . 
$$
It may seem that, following the integral curves of $V$, the metric $h^{(\epsilon)}$ will be small and, then, we {\it might} find an incomplete curve. As we will see, the behaviour of these integral curves changes  with sign$(\epsilon)$. \   The integral curves satisfy:
\begin{equation}\label{e_intcurvV}
\left. \begin{array}{c}
\dot x= -\epsilon y \\ 
\dot y =x \end{array} \right\}
\qquad \hbox{thus}, \quad
\left. 
\begin{array}{c}
\ddot x= -\epsilon x \\ 
\ddot y = -\epsilon y\end{array}\right\}.
\end{equation}
From the last equation,
the expected behaviour is then:
\bit
\item For $\epsilon=-1 , V=y\partial_x+x\partial_y $. There exist diverging integral curves with finite length, thus  proving  incompleteness of $h^{(-1)}$. 
\item For $\epsilon=1, V=-y\partial_x+x\partial_y (\equiv r\partial_\theta) $. Its  integral curves are trigonometric and  cannot yield incompleteness. 
\eit

\subsection{Metrics $h^{(\epsilon)}$: explicit (in-)completeness and exponential growth}\label{s6.5}  Let us rigorously prove the claimed behaviours for $h^{(\epsilon)}$.

\subsubsection*{Case $\epsilon=-1$ (incompleteness)}
Consider the following curve solving \eqref{e_intcurvV}:
$$
\gamma(t)=(x(t),y(t))=(\cosh t, \sinh t) \quad [\hbox{so} \; \dot \gamma(t)=(\dot x(t),\dot y(t))=(\sinh t, \cosh t)]  \qquad \forall t\geq 0 .
$$
It is clearly diverging, and showing that it has finite length suffices. Using \eqref{ehe}:
$$\begin{array}{rl}
h^{(\epsilon=-1)}(\dot \gamma(t),\dot \gamma(t))= & \frac{1}{\cosh^4 t}
\left(
\cosh^2 t \sinh^2 t+(1+\sinh^2t)\cosh^2t
-2 \cosh^2 t \sinh^2 t 
\right)\\
=&  \frac{1}{\cosh^4 (t)}\cosh^2 (t)=
\frac{1}{\cosh^2 (t)}
\end{array}
$$
and the result follows directly, since 
\begin{equation*}
\mathrm{length}(\gamma) = \int_{0}^{+\infty} \frac{1}{\cosh(t)}\, dt \leq \int_{0}^{+\infty} 2 \mathrm{e}^{-t}\, dt = 2.
\end{equation*}

\subsubsection*{Case $\epsilon=1$ (completeness)}
In order to prove completeness, let us consider any diverging  curve $\gamma(t)=(x(t),y(t)), t\in [0,b), b\leq \infty$
and check that its length is infinite.

We claim that to consider only curves $\gamma(t)=(x(t),y(t))$ with  $r^2(t)=x(t)^2+y(t)^2$ unbounded is sufficient. Indeed,  the minimum eigenvalue of $h^{(1)}$ in \eqref{evl} is  such that 
\begin{equation}\label{e_evl}
\begin{array}{lcl} \hbox{evl}_- & = & 
\frac{1}{2x^4}\left((1+x^2+y^2)- \sqrt{(1+x^2+y^2)^2-4 x^2}\right) \\
& = & \dfrac{4 x^2}{2x^4\left(1+x^2+y^2+ \sqrt{(1+x^2+y^2)^2-4 x^2}\right)} \\ & \geq &   
\dfrac{1}{x^2(1+x^2+y^2)}.  \end{array}
\end{equation}
  So, if $x(t), |y(t)|<C$, 
$$
h^{(1)}\geq \frac{dx^2+dy^2}{x^2(1+x^2+y^2)}\geq  \frac{dx^2+dy^2}{x^2}\frac{1}{(1+2C^2)}
$$
which is a (complete) hyperbolic  metric.

Now, using again \eqref{ehe} and putting $r^2=x^2+y^2, (r^2)'=2x\dot x+2y\dot y$:
$$\begin{array}{rl}
h^{(\epsilon=1)}(\dot \gamma(t),\dot \gamma(t))= & \frac{1}{x^4} (x^2 \dot x^2+ (1+y^2)\dot y^2+2x\dot xy \dot y)
 = \frac{1}{x^4} \left((x\dot x+y\dot y)^2 + \dot y^2\right)
 \\ \geq  &
\frac{1}{x^4} (x\dot x+y\dot y)^2 
= \frac{1}{x^4} (\frac{1}{2}(r^2)')^2 \\ \geq & \frac{1}{r^4} (\frac{1}{2}(r^2)')^2 = (\frac{1}{2 r^2}(r^2)')^2  
\\ = &  \left (\frac{1}{2} (\ln (r^2))'\right)^2. 
\end{array}
$$
Thus, taking $t_n \nearrow b$ such that 
 $\{\gamma(t_n)\}_n$ is an unbounded sequence:
$$\begin{array}{rl}
\hbox{length}(\gamma) \geq & \lim_{t_n\rightarrow b} \frac{1}{2}\int_0^{t_n}   (\ln(r^2))'(t) dt \\
 = & \frac{1}{2}\left( \lim_{t_n\rightarrow b}\ln (r^2(t_n))-\ln (r^2(0))\right) = \lim_{t_n\rightarrow b}\ln (r(t_n))-\ln (r(0))=\infty,
\end{array}$$
i.e., it goes to infinity albeit slowly.

 \subsubsection*{Exponential growth}  Let us check the claimed growth on the $x$-semiaxis. According to \eqref{e:lambda-},
$$ \Vert \Ad_{(x,0)}\Vert= \frac{1}{\lambda_-((1/x),0)}$$
where, from \eqref{e_sharp1}, $\lambda^2_-((1/x),0)$ is the  minimum eigenvalue of $h^{(\epsilon)}_{(x,0)}$ with respect to the Riemannian metric $\tilde{g}^{(-1)}$, which in our case is equal to 
$g^{(1)} (=\tilde{g}^{(1)})$, i.e., it is the minimum eigenvalue of $\phi_{(x,0)}$ defined on Aff$^+(\R)$ from
$$
h^{(\epsilon)}_{(x,y)}(u,v)=g^{(1)}_{(x,y)}(u,\phi_{(x,y)}(v)), \qquad \qquad \qquad \forall (x,y)\in \R^+\times \R, \quad \forall u,v\in \R^2. 
$$
The eigenvalue evl$_-$ was computed with respect to $dx^2+dy^2=x^2 g^{(1)}$, thus $\lambda^2_-((1/x),0)= x^2 
\hbox{evl}_{-(x,0)}$   and using the second equality in \eqref{e_evl}
$$
\lambda^2_-((1/x),0)=
\frac{2}{1+x^2+\sqrt{(1-x^2)^2}}, \qquad \hbox{that is}, \qquad
\Vert \Ad_{(x,0)}\Vert=\frac{1}{\sqrt{2}}\sqrt{1+x^2+|1-x^2|}.
$$
To check that this behaviour is not linear for $g^{(1)}$, notice that $\gamma(t)=(e^t,0)$ is a unit (minimizing) geodesic for this metric and $r(\gamma(t))=d_{g^{(1)}}((1,0), \gamma(t)) = |t|$, thus,
$$\lim_{t\rightarrow \infty}\frac{\Vert \Ad_{\gamma(t)}\Vert}{e^{r(t)}}=\lim_{t\rightarrow \infty}\frac{\sqrt{1+e^{2t}+|1-e^{2t}|}}{\sqrt{2} \, e^t}=1.
$$

\subsection{A third  left-invariant metric $g^{(0)}$} 
Consider also the following choices for the parameters
\begin{equation*}
c_1=0, c_2=1, c_3=0.
\end{equation*}
The corresponding left-invariant metric will then be:
$$
g^{(0)}=\frac{1}{x^2}(dxdy+dydx).
$$
Another computation shows that the curve $\gamma(t) = \left( \frac{1}{1-t} , 0 \right)$ is an incomplete geodesic for  $g^{(0)}$ and, thus, its Clairaut metric $h^{(0)}$ 
must then be incomplete. This metric  is
\begin{equation*}
h^{(0)}= \frac{1}{x^4}\left((1+y^2) dx^2+ x^2 dy^2+  xy (dx dy+dy dx)\right) 
\end{equation*}
and we can check directly its incompleteness, since the curve $\gamma: [1, +\infty[ \longrightarrow \R$ such that $\gamma(t) = (t,0)$  is diverging 
and has finite $h^{(0)}$-length. 
Notice that this curve has infinite length for the Clairaut metric $h^{(-1)}$ in \eqref{ehe} and, thus,  
{\it $h^{(0)}$ and $h^{(-1)}$ are not bi-Lipschitz related}.

\subsection{The three classes of left-invariant metrics and their completeness}
We will now show how the analysis of these three choices of parameters allows us to further conclude that all the Clairaut metrics associated to Riemannian left-invariant metrics are complete and all the Clairaut metrics associated to left-invariant semi-Riemannian metrics are incomplete. 

Following \S \ref{s_Abstractformalism}, next we consider the action of the automorphism group of $\g = \mathfrak{aff}(\R)$, $\mathrm{Aut}(\g)$, on $\Sym(\g)$.
If $\varphi: \g \longrightarrow \g$ is an automorphism of $\g$, it satisfies $[\varphi(e_1), \varphi(e_2)] = \varphi(e_2)$ and with respect to the basis $\{e_1, e_2\}$ can be represented by the matrix 
\begin{equation*}
M= \begin{pmatrix}
1 & 0 \\
\alpha & \beta 
\end{pmatrix}, \qquad \beta \neq 0.
\end{equation*}
Then $\varphi^{-1}\in\Aut(\g)$ acts on $\mathrm{Sym}(\g)$ by $M^T B M$, for a bilinear form $B$. We will study the orbits of the action of $\mathrm{Aut}(\g)$ for non-degenerate forms on $\mathrm{Sym}(\g)$. As we will see the Euclidean scalar products are all in the same orbit and the indefinite scalar products are in two orbits, which correspond to the cases where $e_2$ is isotropic or non-isotropic. 
\begin{itemize}
\item Orbit of $B= \begin{pmatrix}
1 & 0 \\ 0 & 1
\end{pmatrix}$:

\smallskip 

$M^T B M = \begin{pmatrix}
1 & 0 \\
\alpha & \beta 
\end{pmatrix}^T \begin{pmatrix}
1 & 0 \\ 0 & 1
\end{pmatrix}  \begin{pmatrix}
1 & 0 \\ \alpha & \beta
\end{pmatrix}  =  \begin{pmatrix}
1+ \alpha^2 & \alpha\beta \\ \alpha\beta & \beta^2
\end{pmatrix}$ 

\smallskip
\noindent and thus, \emph{up to (positive or negative)  scaling}, this orbit  contains Euclidean 
 (or negative definite)  scalar products and, moreover, all of them. The last assertion can be seen by directly solving the equation $M^T B M = \lambda P$, where $P$ is the matrix of a general Euclidean scalar product, and taking $\lambda\in\R\backslash\{0\}$.  

\item Orbit of $B= \begin{pmatrix}
1 & 0 \\ 0 & -1
\end{pmatrix}$:

\smallskip 

$M^T B M = \begin{pmatrix}
1 & 0 \\
\alpha & \beta 
\end{pmatrix}^T \begin{pmatrix}
1 & 0 \\ 0 & -1
\end{pmatrix}  \begin{pmatrix}
1 & 0 \\ \alpha & \beta
\end{pmatrix}  =  \begin{pmatrix}
1- \alpha^2 & -\alpha\beta \\ -\alpha\beta & -\beta^2
\end{pmatrix}$ 

\smallskip
\noindent and,  reasoning as above,  \emph{up to scaling}, this orbit corresponds to all  Lorentzian  scalar products such that $\langle e_2, e_2 \rangle \neq 0$.

\item Orbit of $B= \begin{pmatrix}
0 & 1 \\ 1 & 0
\end{pmatrix}$:

\smallskip 

$M^T B M = \begin{pmatrix}
1 & 0 \\
\alpha & \beta 
\end{pmatrix}^T \begin{pmatrix}
0 & 1 \\ 1 & 0
\end{pmatrix}  \begin{pmatrix}
1 & 0 \\ \alpha & \beta
\end{pmatrix}  =  \begin{pmatrix}
2\alpha & \beta \\ \beta & 0
\end{pmatrix}$ 

\smallskip
\noindent and thus this orbit corresponds to all   Lorentzian  scalar products such that $\langle e_2, e_2 \rangle = 0$.
\end{itemize}

 As we have found one incomplete geodesic in each class of Lorentzian orbits of Aff$^+(\R)$, Prop. \ref{p_aut_orbits}(2)  proves that all the left-invariant Lorentzian metrics on Aff$^+(\R)$ are incomplete and, therefore Cor. \ref{Cor:affine-group}. Furthermore, Prop. \ref{p_aut_orbits}(3) shows the existence of only three bi-Lipschitz classes of Clairaut metrics.

\section*{Acknowledgements}  

\begingroup
\sloppy

 The research of the first and second named authors was financed by Portuguese Funds through FCT (Fundação para a Ciência e a Tecnologia, I.P.) within the Projects UIDB/00013/2020 and UIDP/00013/2020 with the references  DOI: 10.54499/UIDB/00013/2020 (https://doi.org/10.54499/UIDB/00013/2020) and   DOI: 10.54499/UIDP/00013/2020 (https://doi.org/10.54499/UIDP/00013/2020). The third named author was partially supported by the grants, 
PID2020-116126GB-I00 (MCIN/AEI/10.13039/501100011033),
and the framework IMAG/Mar\'{\i}a
de Maeztu, 
CEX2020-001105-MCIN/AEI/10.13039/501100011033.

\endgroup


\bigskip\bigskip

\end{document}